\documentclass[11pt]{amsart}

\usepackage[noadjust]{cite}
\usepackage[colorlinks, citecolor=red, urlcolor=blue, bookmarks=false, hypertexnames=true]{hyperref}
\usepackage{empheq}

\topmargin 0mm \evensidemargin 15mm \oddsidemargin 15mm \textwidth
140mm \textheight 230mm

\theoremstyle{plain}
\newtheorem{theorem}	{Theorem}[section]
\newtheorem*{theorem*}	{Theorem \ref{thm:appl}}

\newtheorem{corollary}	[theorem]	{Corollary}
\newtheorem{lemma}	[theorem]	{Lemma}

\theoremstyle{definition}

\newtheorem{remark}       [theorem]  {Remark}

\numberwithin{equation}{section}

\DeclareMathOperator{\trace}{trace}
\DeclareMathOperator{\grad}{grad}

\allowdisplaybreaks

\begin{document}
	\title[Biharmonic hypersurfaces with three distinct principal curvatures]
	{On the biharmonic hypersurfaces with three distinct principal curvatures in space forms}
	
	\author{\c Stefan Andronic, Yu Fu and Cezar Oniciuc}
	
	\address{Faculty of Mathematics, Al. I. Cuza University of Iasi,
		Blvd. Carol I, no. 11, 700506 Iasi, Romania} \email{stefanandronic215@gmail.com}
	
	\address{School of Data Science and Artificial Intelligence, Dongbei University of Finance and
		Economics, Dalian 116025, P. R. China} \email{yufu@dufe.edu.cn}
	
	\address{Faculty of Mathematics, Al. I. Cuza University of Iasi,
		Blvd. Carol I, no. 11, 700506 Iasi, Romania} \email{oniciucc@uaic.ro}
	
	
	\begin{abstract}
		In \cite{YuFu} there was proved that any biharmonic hypersurface with at most three distinct principal curvatures in space forms has constant mean curvature. At the very last step of the proof, the argument relied on the fact that the resultant of two polynomials is a non-zero polynomial. In this paper we point out that, in fact, there is a case, and only one, when this resultant is the zero polynomial and therefore the original proof is not fully complete. Further, we prove that in this special case we still obtain that the hypersurface has constant mean curvature.
	\end{abstract}
	
	\keywords{Biharmonic hypersurfaces; constant mean curvature; space forms.}
	
	\subjclass[2020]{53C42 (Primary); 53B25.}
	
	\maketitle
	
	\section{Introduction}
		
	Biharmonic maps $\varphi : M \to N$ between Riemannian manifolds are critical points of the bienergy functional and represent a natural generalization of the well-known harmonic maps. Their study was suggested in the mid-$60's$ by J. Eells and J.H. Sampson (see \cite{EellsSampson1964}, \cite{EellsSampson1966}), but the first articles where biharmonic maps were systematically studied appeared in the mid-80's (see \cite{Jiang1}, \cite{Jiang2}). In those articles, G.-Y. Jiang derived the first and the second variation formulas for the bienergy functional
	$$
	E_2:C^\infty (M,N) \to \mathbb R,\quad E_2(\varphi) = \frac 12 \int_{M} |\tau(\varphi)|^2\ v_g,
	$$ 
	where $M$ is compact and $\tau(\varphi) = \trace \nabla d\varphi$ is the tension field of $\varphi$.
	
	The Euler-Lagrange equation for the bienergy is given by the vanishing of the bitension field, i.e. 
	\begin{equation}\label{EulerLagrangeBienergy}
		\tau _2 (\varphi) = -\Delta^\varphi \tau(\varphi) - \trace R^N\left (d\varphi (\cdot), \tau(\varphi)\right ) d\varphi(\cdot) = 0,
	\end{equation}
	where $\Delta^\varphi$ is the rough Laplacian acting on the sections of $\varphi ^{-1} \left (TN\right )$ and $R^N$ is the curvature tensor field.
	
	The non-linear fourth order elliptic equation $\tau_2(\varphi) = 0$ is called the biharmonic equation. Since any harmonic map is biharmonic, we are interested in the study of the biharmonic maps which are not harmonic, called proper-biharmonic.
	
	When $\varphi: M \to N$ is an isometric immersion or, simply, when $M$ is a submanifold of $N$, we say that $M$ is biharmonic if the immersion $\varphi$ is also a biharmonic map. In this case, the biharmonic equation splits into the tangent and the normal parts, the latter being elliptic. 
	
	Independently, B.Y. Chen introduced in \cite{ChenBook1984} the notion of biharmonic submanifolds in Euclidean spaces $\mathbb R^n$, and this notion can be easily recovered from \eqref{EulerLagrangeBienergy} when the ambient manifold is flat.
	
	In spaces of non-positive sectional curvature, with only one exception (see \cite{OuTang2012}), we have only non-existence results, i.e. any biharmonic submanifold must be harmonic (minimal); for example, see \cite{FuHongZhan2021}, \cite{MontaldoOniciucRatto2016}. In particular, the following conjecture is still valid in its full generality (see \cite{Chen1991}):
	
	\textbf{Chen's conjecture.} \textit{Any biharmonic submanifold in the Euclidean space is minimal.}
	
	On the other hand, in spaces of positive sectional curvature, especially in Euclidean spheres, many examples and classification results had been obtained (see, for example, \cite{ChenOuBook2020}, \cite{FetcuOniciuc2022}, \cite{FuYangZhan}, \cite{GuanLiVrancken2021}, \cite{Nistor2022}, \cite{OniciucHabilitation}). Motivated by the known examples and results, the following conjecture has been raised (see \cite{BalmusMontaldoOniciuc}):
	
	\textbf{Conjecture ($C_1$).} \textit{Any proper-biharmonic submanifold in the Euclidean sphere has constant mean curvature.}
	
	The above conjecture was stated for submanifolds in the unit Euclidean spheres $\mathbb S^n$ because there we have examples of proper-biharmonic submanifolds, but it can be considered for submanifolds in any space form $N^n(c)$, i.e. space of constant sectional curvature $c$.
	
	In the particular case of proper-biharmonic hypersurfaces in space forms, assuming some extra hypothesis, there have been obtained several results which confirm the Conjecture ($C_1$) (see, for example, \cite{OniciucHabilitation}).
	
	One way to tackle the Conjecture ($C_1$) for hypersurfaces in space forms is to divide the study according to the number $\ell$ of distinct principal curvatures. When $\ell = 1$ everywhere, we obtain in a standard way that the hypersurface has constant mean curvature, i.e. it is CMC (see, for example, \cite{DoCarmo}). When, at any point, $\ell$ is at most $2$, ($C_1$) was proved in \cite{BalmusMontaldoOniciuc}, \cite{Dimitric1992}.
	
	When $\ell$ is at most $3$ and $m = 3$, the result was obtained in \cite{BalmusMontaldoOniciuc2010}, \cite{Defever}, \cite{HasanisVlachos1995}. Then, when $m\geq 4$, the Conjecture ($C_1$) was proved by Y. Fu in \cite{YuFu}.
	
	In our paper we show that at the very end of Y. Fu's proof there is a small gap. The author claimed that the resultant of two polynomials which appear in the proof is a non-zero polynomial. Apparently surprisingly, computing this resultant with Mathematica\textsuperscript{\textregistered}, we find that there is a case where, actually, the resultant is the zero polynomial. This occurs when the hypersurface $M^m$ in $N^{m+1}(c)$ has dimension $m=7$, the multiplicities of the three distinct principal curvatures are $1$, $3$, $3$ and $c \neq 0$. In fact, this special case has been announced by Lemma \ref{LemmaBicons}. In this situation, when the resultant is the zero polynomial, we do not obtain the desired contradiction and thus the proof in \cite{YuFu} is not complete.
	
	 Further, we split the analysis as follows. When $c=0$, we prove that any proper-biharmonic hypersurface with at most three distinct principal curvatures is CMC, for any $m$. In fact, the $c=0$ case was already proved in \cite{YuFuTohoku2015} in a similar way, but we keep it here for the sake of completeness. When $c\neq0$, we first modify the polynomials and we get a new resultant. We show that the only case when the new resultant is the zero polynomial is the one mentioned above, no matter if $c<0$, $c=0$ or $c>0$. We end the paper proving that even in this special case we can conclude that the hypersurface $M^7$ is CMC.
	 
	 The Conjecture ($C_1$) is an important issue for biharmonic hypersurfaces in spheres because it would imply that any proper-biharmonic hypersurface in Euclidean spheres has constant mean curvature and constant scalar curvature. This fact is related to the following version of the Chern's Conjecture:\\ 
	 \textbf{Generalized Chern's Conjecture.} \textit{Any hypersurface with constant mean and scalar curvatures in the Euclidean sphere is isoparametric.}
	 
	 Further, if Conjecture ($C_1$) and the Generalized Chern's Conjecture are proved then, using the classification of proper-biharmonic isoparametric hyperspheres in spheres obtained in \cite{IchiyamaInoguchiUrakawa2009}, \cite{IchiyamaInoguchiUrakawa2010}, we can reach the full classification of proper-biharmonic hypersurfaces in Euclidean spheres, as it was conjectured in \cite{BalmusMontaldoOniciuc}: \\
	 \textbf{Conjecture ($C_2$).} \textit{Let $M^m$ be a proper-biharmonic hypersurface in $\mathbb S^{m+1}$. Then $M$ is either an open part of the small hypersphere $\mathbb S^m(1/\sqrt 2)$ of radius $1 / \sqrt 2$ or an open part of $\mathbb S^{m_1} (1/\sqrt 2) \times \mathbb S^{m_2} (1/\sqrt 2)$, $m_1 + m_2 = m$, $m_1 \neq m_2$.}
	 
	 We mention that, when $m\geq 4$ and $M$ has three distinct principal curvatures, the Generalized Chern's Conjecture was proved in \cite{AlmeidaBritoScherfnerWeiss2020}. Therefore, as the Generalized Chern's Conjecture is of local nature, Conjecture ($C_2$) is proved for $m\geq 4$ and $M$ with at most three distinct principal curvatures (see Corollary \ref{CorollaryClassificationHypersurfacesBiharm3Curvatures}).
	
	\section{Conventions}\label{Conventions}
	In this paper, all manifolds are assumed to be connected and oriented. In general, the metrics are indicated by $\langle \cdot, \cdot \rangle$ or, simply, not explicitly mentioned. The Levi-Civita connection of the Riemannian manifold $M$ is denoted by $\nabla$.
	
	The rough Laplacian defined on the set of all sections in the pull-back bundle $\varphi^{-1} \left (TN\right )$ is given by 
	$$
	\Delta ^\varphi = -\trace \left ( \nabla ^\varphi \nabla ^\varphi - \nabla ^\varphi _\nabla\right )
	$$
	and the curvature tensor field on $N$ is 
	$$
	R^N\left ( U, V\right ) W = \left [ \nabla ^N _U, \nabla ^N _V\right ] W - \nabla ^N _{[U, V]} W.
	$$
	For a hypersurface $M^m$ in $N^{m+1}$ we denote mean curvature function by $f =\left (\trace A \right ) / m$, where $A = A_\eta$ is the shape operator of $M$ and $\eta$ is a unit section in the normal bundle.
	
	\section{Preliminaries}\label{Preliminaries}
	
	We briefly recall that when $M$ is a hypersurface in a space form we have the following characterization of the biharmonicity (for $c=0$ see \cite{ChenBook1984} and for any $c$ see \cite{CaddeoMontaldoOniciuc2001}, \cite{ChenJH1993}, \cite{Jiang1}).
	\begin{theorem}
		Let $M^m$ be a hypersurface in a space form $N^{m+1} (c)$. Then $M$ is biharmonic if and only if 
		\begin{equation}\label{BiharmonicEquationHypersurfaces}
			\left \{
			\begin{array}{rl}
				\rm {(i)}  & \Delta f + \left ( |A|^2 - mc \right ) f = 0\\
				\rm {(ii)} & 2 A\left (\grad f \right ) + mf\grad f = 0
			\end{array}
			\right . .
		\end{equation}
	\end{theorem}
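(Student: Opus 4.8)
The plan is to start from the Euler--Lagrange equation \eqref{EulerLagrangeBienergy} and exploit the fact that, for the isometric immersion $\varphi$, the tension field is $\tau(\varphi)=mH=mf\eta$, where $H$ is the mean curvature vector field. Substituting this into the bitension field and then decomposing $\tau_2(\varphi)$ into its parts tangent and normal to $M$, biharmonicity becomes equivalent to the simultaneous vanishing of both parts. I would fix a local orthonormal frame $\{e_i\}$ on $M$ that is geodesic at an arbitrary point $p$ (so that $\nabla_{e_i}e_j=0$ at $p$) and carry out the computation pointwise; all the geometry enters through the Gauss formula $\nabla^N_X Y=\nabla_X Y+\langle AX,Y\rangle\eta$ and the Weingarten formula $\nabla^N_X\eta=-AX$.

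First I would compute the rough Laplacian $\Delta^\varphi\tau(\varphi)=m\,\Delta^\varphi(f\eta)$. Applying Weingarten gives $\nabla^\varphi_{e_i}(f\eta)=(e_if)\eta-fAe_i$, and iterating once more, using Gauss to extract the normal component of $\nabla^\varphi_{e_i}(Ae_i)$, one obtains after summation and using $\sum_i|Ae_i|^2=|A|^2$ a clean splitting: the normal part collects $-\Div(\grad f)$ and $f|A|^2$, while the tangent part collects $2A(\grad f)$ together with the trace term $f\,\trace(\nabla A)=f\sum_i(\nabla_{e_i}A)e_i$.

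The decisive algebraic step, and the one I expect to require the most care, is the identification $\trace(\nabla A)=m\,\grad f$. This is where the ambient curvature assumption is used: in a space form the Codazzi equation reduces to $(\nabla_X A)Y=(\nabla_Y A)X$, and combining this symmetry with the self-adjointness of each $\nabla_{e_i}A$ yields $\langle\trace(\nabla A),X\rangle=\trace(\nabla_X A)=X(\trace A)=m\langle\grad f,X\rangle$ for every tangent $X$. Hence the tangent part of $\Delta^\varphi\tau(\varphi)$ takes the desired form $2A(\grad f)+mf\grad f$.

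Finally I would evaluate the curvature term. Since $N^{m+1}(c)$ has constant sectional curvature, $R^N(U,V)W=c(\langle V,W\rangle U-\langle U,W\rangle V)$, so that $\trace R^N(d\varphi(\cdot),\tau(\varphi))d\varphi(\cdot)=\sum_i R^N(e_i,mf\eta)e_i=-m^2cf\eta$, using $\langle\eta,e_i\rangle=0$. Assembling $\tau_2(\varphi)=-\Delta^\varphi\tau(\varphi)-\trace R^N(d\varphi(\cdot),\tau(\varphi))d\varphi(\cdot)$ and separating components, each of which carries a common nonzero factor $m$ that is discarded, the tangent equation becomes (ii) and the normal equation becomes $-\Div(\grad f)+(|A|^2-mc)f=0$, which is precisely (i) once one records that $\Delta$ denotes the geometer's (nonnegative) Laplacian $\Delta=-\Div\grad$ fixed in Section~\ref{Conventions}. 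Keeping the two sign conventions, for $\Delta$ and for $R^N$, consistent throughout is the only real pitfall in an otherwise mechanical computation.
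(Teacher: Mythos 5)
Your computation is correct and is the standard argument: substitute $\tau(\varphi)=mf\eta$ into the bitension field, split into tangent and normal parts via the Gauss and Weingarten formulas, and use the space-form Codazzi equation to identify $\trace(\nabla A)=m\,\grad f$; the sign conventions you use for $\Delta$ and $R^N$ match those fixed in Sections \ref{Conventions} and \ref{Preliminaries}. Note that the paper itself does not prove this characterization but only recalls it with citations, and your argument is essentially the one found in those references.
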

	We recall the result obtained by Y. Fu concerning the proper-biharmonic hypersurfaces with three distinct principal curvatures in space forms. 
	\begin{theorem}[\cite{YuFu}]\label{MainTheorem}
		Let $M^m$ be a proper-biharmonic hypersurface in $N^{m+1}(c)$, $m\geq 4$, with at most three distinct principal curvatures. Then $M^m$ is CMC.
	\end{theorem}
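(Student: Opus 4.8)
The plan is to argue by contradiction. Suppose that $M$ is not CMC, so that the open set $U=\{p\in M : \grad f(p)\neq 0\}$ is non-empty; I would work throughout on $U$. From \eqref{BiharmonicEquationHypersurfaces}(ii) we get $A(\grad f)=-\tfrac{mf}{2}\grad f$, so $\grad f$ is everywhere a principal direction with principal curvature $\lambda_1=-\tfrac{mf}{2}$. If at each point of $U$ there are at most two distinct principal curvatures, then $M$ is CMC by \cite{BalmusMontaldoOniciuc}, \cite{Dimitric1992}; hence I may assume that $M$ has exactly three distinct principal curvatures $\lambda_1,\lambda_2,\lambda_3$ on $U$, with multiplicities $m_1,m_2,m_3$ satisfying $m_1+m_2+m_3=m$. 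I fix a local orthonormal frame $\{e_i\}_{i=1}^m$ of principal directions with $e_1=\grad f/|\grad f|$; then $e_1(f)\neq 0$, whereas $e_a(f)=0$, and thus $e_a(\lambda_1)=0$, for every $e_a$ orthogonal to $e_1$.

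The second step is to exploit the structure equations. Since the ambient space is a space form, the shape operator is a Codazzi tensor, $(\nabla_X A)Y=(\nabla_Y A)X$; written in the principal frame, this expresses the derivatives $e_i(\lambda_j)$ and the connection coefficients $\langle\nabla_{e_i}e_j,e_k\rangle$ in terms of the differences $\lambda_i-\lambda_j$. Using that $\grad f$ is a gradient together with $e_a(f)=0$ for $a\geq 2$, one controls the covariant derivatives in the $e_1$-direction and records how $\lambda_2$ and $\lambda_3$ evolve along $e_1$. Finally I would invoke the Gauss equation $\langle R(e_i,e_j)e_j,e_i\rangle=c+\lambda_i\lambda_j$ to convert the resulting second-order information into purely algebraic constraints among $\lambda_1,\lambda_2,\lambda_3$ and $c$.

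With the frame in place I would rewrite \eqref{BiharmonicEquationHypersurfaces}(i). Because $f$ varies only in the $e_1$-direction, $\Delta f$ reduces to an expression in $e_1(e_1(f))$ and $e_1(f)$ whose coefficients are connection coefficients, while $|A|^2=m_1\lambda_1^2+m_2\lambda_2^2+m_3\lambda_3^2$ and $mf=m_1\lambda_1+m_2\lambda_2+m_3\lambda_3$. Substituting and then differentiating along $e_1$, one can eliminate the first- and second-order derivatives of $f$ (equivalently of $\lambda_1$) by means of the Codazzi and Gauss relations; after also using $\trace A=mf$ to express $\lambda_3$ in terms of $\lambda_1$ and $\lambda_2$, I expect to be left with two polynomial identities $P(\lambda_1,\lambda_2)=0$ and $Q(\lambda_1,\lambda_2)=0$, whose coefficients depend only on $m,m_2,m_3$ and $c$ and which must hold at every point of $U$.

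The last step, which I expect to be the crux of the whole argument, is the elimination. Eliminating $\lambda_2$ by forming the resultant $R(\lambda_1)=\mathrm{Res}_{\lambda_2}(P,Q)$ yields a one-variable polynomial in $\lambda_1$, with numerical coefficients depending on $m,m_2,m_3,c$, which must vanish for every value taken by $\lambda_1$ on $U$. Since $\lambda_1=-mf/2$ is non-constant on the connected set $U$, its values fill an interval, so $R$ has infinitely many roots; if $R$ is a non-zero polynomial this is impossible, contradicting $\grad f\neq 0$, whence $U=\emptyset$ and $M$ is CMC. The delicate point is precisely whether $\mathrm{Res}_{\lambda_2}(P,Q)$ is a non-zero polynomial, and verifying this requires a careful computation, since for some admissible values of $m$ and the multiplicities $m_1,m_2,m_3$ the resultant could conceivably degenerate to the zero polynomial, in which case this final step would fail to produce the contradiction.
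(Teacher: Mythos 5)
Your outline reproduces, in essentially the right order, the strategy of the paper (and of Y.~Fu's original argument): reduce to exactly three distinct curvatures with $\grad f$ a principal direction of multiplicity one, use Codazzi and Gauss together with both parts of the biharmonic equation to produce two polynomial identities in $f$ and $k_2$ with constant coefficients, and eliminate $k_2$ by a resultant. But you stop exactly where the real difficulty lies, and this is a genuine gap rather than a detail: the resultant \emph{does} degenerate to the zero polynomial in precisely one admissible configuration, namely $m=7$, multiplicities $(m_1,m_2,m_3)=(1,3,3)$ (i.e.\ $r=4$) and $c\neq 0$. There the two polynomials, which become \eqref{PolynomialSpecialCase1} and \eqref{PolynomialSpecialCase2}, share the common factor $\delta=28k_2^2-98fk_2+147f^2-32c$, and relation \eqref{Relation1} forces $\delta=0$ along the integral curve of $E_1$, so the elimination yields $0=0$ and no contradiction. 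One must then run a separate argument --- differentiating $\delta=0$, combining it with the Gauss relation $\Omega\Theta=-c-k_2k_3$ and with the normal part of the biharmonic equation --- to extract a non-trivial $9^{\mathrm{th}}$-degree polynomial equation in $f$ with constant coefficients and only then conclude. Without this supplementary case the proof is incomplete; indeed this is the very gap in \cite{YuFu} that the present paper is written to close, and establishing that this is the \emph{only} degenerate case already requires a substantial (computer-assisted) verification.

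Two smaller omissions would also need to be filled in. First, before any elimination one must show that $k_2$ is constant along the leaves of the distribution orthogonal to $\grad f$ (Lemma \ref{LemmaLeavesD}); this is immediate from Codazzi when both $m_2,m_3\geq 2$, but when $m_3=1$ it requires a separate curvature computation using the Gauss equation and the normal part of the biharmonic equation. Second, the second polynomial identity is not obtained independently of the first: it comes from differentiating \eqref{Polynomial1} along an integral curve of $E_1$ and substituting $d\tilde f/dk_2$ from \eqref{DerF}, which in turn requires knowing that $Q$ does not vanish (Lemma \ref{LemmaQnotVanishes}) and that $k_2/f$ is not constant along the curve (Lemma \ref{LemmaKFconst}); neither of these is routine.
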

	
	In order to give a direct application of Theorem \ref{MainTheorem}, we first recall the recent result obtained in \cite{AlmeidaBritoScherfnerWeiss2020}.
	
	\begin{theorem}[\cite{AlmeidaBritoScherfnerWeiss2020}]
		Let $M^m$ be a hypersurface with constant mean and scalar curvatures in $\mathbb S^{m+1}$, $m\geq 4$. Assume that $M$ has three distinct principal curvatures at any point. Then $M$ is isoparametric.
	\end{theorem}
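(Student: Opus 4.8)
The plan is to work in a local orthonormal frame of principal directions and to combine the constancy of the two lowest power sums of the principal curvatures with the Codazzi and Gauss equations. Throughout I take $c=1$ and write $A$ for the shape operator. Since $M$ has exactly three distinct principal curvatures at each point and $M$ is connected, these are smooth functions $\lambda,\mu,\nu$ with constant multiplicities $p,q,r$, $p+q+r=m$. Constancy of the mean curvature reads $\trace A=p\lambda+q\mu+r\nu=\mathrm{const}$, and, by the Gauss equation, constancy of the scalar curvature is equivalent to $|A|^2=p\lambda^2+q\mu^2+r\nu^2=\mathrm{const}$. Thus it suffices to prove that $\lambda,\mu,\nu$ are constant, for then $M$ has constant principal curvatures and is isoparametric.

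First I would exploit the two conservation laws infinitesimally. Differentiating the two relations above yields, at every point and for every tangent vector $X$, a homogeneous linear system in $(X\lambda,X\mu,X\nu)$ whose coefficient matrix has rows $(p,q,r)$ and $(p\lambda,q\mu,r\nu)$; this matrix has rank two precisely because $\lambda,\mu,\nu$ are pairwise distinct. Hence $\grad\lambda,\grad\mu,\grad\nu$ are all proportional to a single vector field $U$, with the explicit ratios $\grad\lambda:\grad\mu:\grad\nu=\frac{\nu-\mu}{p}:\frac{\lambda-\nu}{q}:\frac{\mu-\lambda}{r}$. In particular the three curvatures vary only in the direction $U$, and it remains to show $U\equiv0$ on $M$.

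Next I would feed in the Codazzi equation. Writing $Ae_\alpha=k_\alpha e_\alpha$ in a principal frame, Codazzi in a space form gives $e_\beta(k_\alpha)=(k_\beta-k_\alpha)\langle\nabla_{e_\alpha}e_\beta,e_\alpha\rangle$ for $\alpha\neq\beta$, together with $(k_\alpha-k_\gamma)\langle\nabla_{e_\beta}e_\alpha,e_\gamma\rangle=(k_\beta-k_\gamma)\langle\nabla_{e_\alpha}e_\beta,e_\gamma\rangle$ for distinct indices. Applying the first relation to two frame vectors inside one eigenspace shows that any principal curvature of multiplicity at least two is constant along its own eigendistribution; combined with the proportionality above and the distinctness of $\lambda,\mu,\nu$, this forces $U$ to be orthogonal to every eigenspace of multiplicity $\geq2$. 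Since $m\geq4$, at least one multiplicity exceeds one. If all three do, then $U\equiv0$ already. Otherwise $U$ is confined to the sum of the one or two multiplicity-one eigenspaces: in the pattern $(1,q,r)$ with $q,r\geq2$ this pins $U$ to the one-dimensional $T_\lambda$, and the Codazzi relations then give $\nabla_{e_1}e_1=0$, so the integral curves of $U=e_1$ are geodesics of $M$; the pattern $(1,1,r)$ requires the two-dimensional analogue of this step.

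The main obstacle is the final step, showing that $U$ cannot vanish. Here I would parametrize an integral geodesic of $e_1$ by arclength and derive the evolution system for the curvatures and for the umbilicity functions $h_\mu=\langle\nabla_{e_a}e_1,e_a\rangle$, $h_\nu=\langle\nabla_{e_b}e_1,e_b\rangle$ of the leaves $T_\mu,T_\nu$: Codazzi gives $\mu'=(\lambda-\mu)h_\mu$ and $\nu'=(\lambda-\nu)h_\nu$, while the Gauss equation applied to the planes $e_1\wedge e_a$ and $e_1\wedge e_b$ yields Riccati-type equations $h_\mu'=h_\mu^2+(1+\lambda\mu)$ and $h_\nu'=h_\nu^2+(1+\lambda\nu)$ up to lower-order connection terms. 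Together with the two conservation laws $p\lambda+q\mu+r\nu$ and $p\lambda^2+q\mu^2+r\nu^2$, which are first integrals of this system, one obtains an over-determined first-order system. The heart of the argument, and the step I expect to be genuinely delicate, where the precise multiplicity arithmetic and the sign $c=1>0$ enter, is to show that this system admits only equilibria, that is $h_\mu=h_\nu=0$; then $\mu'=\nu'=0$, the first conservation law forces $\lambda'=0$, hence $U\equiv0$ and all principal curvatures are constant, so $M$ is isoparametric.
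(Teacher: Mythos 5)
First, note that the paper does not prove this statement at all: it is quoted from \cite{AlmeidaBritoScherfnerWeiss2020} and used as a black box to deduce Corollary \ref{CorollaryClassificationHypersurfacesBiharm3Curvatures}, so there is no internal proof to compare yours against. Judged on its own merits, your reduction steps are sound: constancy of the mean and scalar curvatures does give $|A|^2=\mathrm{const}$; differentiating the two power sums does show that $\grad\lambda$, $\grad\mu$, $\grad\nu$ are pointwise proportional (the $2\times 3$ coefficient matrix has rank two precisely because the curvatures are pairwise distinct); and the Codazzi identity $e_i(k_j)=(k_i-k_j)\langle\nabla_{e_j}e_i,e_j\rangle$ does force any curvature of multiplicity $\geq 2$ to be constant along its own eigendistribution, hence $U$ is orthogonal to every such eigenspace. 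The case where all three multiplicities are $\geq 2$ is then finished, and in the pattern $(1,q,r)$ you correctly arrive at an ODE system along the integral curves of $e_1$ (modulo a sign: with $h_\mu=\langle\nabla_{e_a}e_1,e_a\rangle$ the Gauss equation gives $h_\mu'+h_\mu^2=-(1+\lambda\mu)$, compare \eqref{EquationGauss1}, and you have omitted the essential mixed constraint $h_\mu h_\nu=-(1+\mu\nu)$ coming from the plane $e_a\wedge e_b$, the analogue of \eqref{EquationGauss3}).

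The genuine gap is that the theorem's entire content lives in the step you defer. ``Show that this over-determined system admits only equilibria'' is not an argument: over-determined first-order systems with algebraic constraints can perfectly well have non-constant solutions, and ruling them out here requires exactly the kind of heavy elimination that the present paper must carry out for the structurally identical biharmonic system --- differentiating the algebraic relations along the flow, producing two polynomial relations in the curvature functions, and proving that a resultant is a non-zero polynomial, with a delicate case discussion (this is the content of Lemma \ref{LemmaPropertiesOmegaTheta} through Lemma \ref{LemmaDerivativeF}, relations \eqref{Polynomial1}--\eqref{Polynomial4} and the three appendices; note that even there one exceptional case, $m=7$ and $r=4$, survives the resultant argument and needs a separate treatment). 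Nothing in your sketch identifies where the hypotheses $c=1>0$ and $m\geq 4$ actually enter, which is a warning sign, since the conclusion is sensitive to both. The $(1,1,r)$ pattern is likewise only named, not handled: there $U$ lives in a two-dimensional distribution, the curve-by-curve reduction breaks down, and a genuinely different argument is needed. As it stands, the proposal is a correct set-up followed by an unproved assertion of the main difficulty, so it does not constitute a proof.
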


	\begin{remark}
		The above result is a generalization to the non-compact case of that obtained in \cite{Chang1994}.
	\end{remark}
	
	Now, using the classification of proper-biharmonic isoparametric hypersurfaces obtained in \cite{IchiyamaInoguchiUrakawa2009}, \cite{IchiyamaInoguchiUrakawa2010} we can conclude
	
	\begin{corollary}\label{CorollaryClassificationHypersurfacesBiharm3Curvatures}
		Let $M^m$ be a proper-biharmonic hypersurface in $\mathbb S^{m+1}$, $m\geq 4$, with at most three distinct principal curvatures. Then $M$ is either an open part of the small hypersphere $\mathbb S^m(1/\sqrt 2)$ of radius $1 / \sqrt 2$ or an open part of $\mathbb S^{m_1} (1/\sqrt 2) \times \mathbb S^{m_2} (1/\sqrt 2)$, $m_1 + m_2 = m$, $m_1 \neq m_2$.
	\end{corollary}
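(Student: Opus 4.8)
The plan is to upgrade the CMC conclusion of Theorem~\ref{MainTheorem} to the full isoparametric picture and then read off the classification from \cite{IchiyamaInoguchiUrakawa2009}, \cite{IchiyamaInoguchiUrakawa2010}. First I would apply Theorem~\ref{MainTheorem} with $c=1$ to get that $M$ is CMC, so that the mean curvature $f$ is a constant; since $M$ is proper-biharmonic, $f\neq 0$. With $\grad f=0$, equation (ii) of \eqref{BiharmonicEquationHypersurfaces} holds trivially, while equation (i) reduces to $\left(|A|^2-m\right)f=0$, whence $|A|^2=m$ is constant. Substituting this into the Gauss equation $s=m(m-1)+(\trace A)^2-|A|^2=m(m-1)+m^2f^2-|A|^2$ shows that the scalar curvature $s$ is constant as well. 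Thus $M$ is a hypersurface in $\mathbb S^{m+1}$ with constant mean and scalar curvatures, which is exactly the setting required to invoke the theorem of \cite{AlmeidaBritoScherfnerWeiss2020}.

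Next I would analyse the principal curvatures on the open dense subset of $M$ where the multiplicities of the principal curvatures are locally constant; on each connected component $U_0$ of this set the number $\ell$ of distinct principal curvatures is constant and the principal curvatures are smooth functions. If $\ell=3$ on $U_0$, the hypotheses of \cite{AlmeidaBritoScherfnerWeiss2020} hold and, that result being of local nature, $U_0$ is an open part of an isoparametric hypersurface. If $\ell\le 2$ on $U_0$, the two constants $\trace A=mf$ and $|A|^2$ together with the constant multiplicities determine the (at most two) principal curvatures, so these are constant and $U_0$ is again isoparametric. In either case $U_0$ is a proper-biharmonic isoparametric hypersurface, and the classification in \cite{IchiyamaInoguchiUrakawa2009}, \cite{IchiyamaInoguchiUrakawa2010} identifies $U_0$ as an open part of $\mathbb S^m(1/\sqrt 2)$ or of $\mathbb S^{m_1}(1/\sqrt 2)\times\mathbb S^{m_2}(1/\sqrt 2)$ with $m_1\neq m_2$. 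Finally, since $M$ is CMC it is real-analytic, so, coinciding with one of these model hypersurfaces on the open set $U_0$, it is an open part of that model; in particular the possibility $\ell=3$ is excluded a posteriori, both models having at most two distinct principal curvatures.

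The step I expect to be the main obstacle is the case $\ell=3$. When $\ell\le 2$ the two constant quantities $\trace A$ and $|A|^2$ already pin down the (at most two) principal curvatures, so isoparametricity is elementary; but for three distinct principal curvatures $\lambda_1,\lambda_2,\lambda_3$ these two relations are genuinely insufficient to force the $\lambda_i$ to be constant, and it is precisely this under-determination that makes the deep result of \cite{AlmeidaBritoScherfnerWeiss2020}---a form of the Generalized Chern's Conjecture---indispensable rather than a routine computation. A secondary subtlety is that the number of distinct principal curvatures may a priori vary over $M$; this is absorbed by restricting to the open dense set where the multiplicities are locally constant and then using the real-analyticity of $M$ (from the CMC/biharmonic equation) to propagate the local identification with a model hypersurface to all of $M$.
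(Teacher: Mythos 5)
Your proposal is correct and follows essentially the same route the paper intends (the paper leaves the proof of the corollary implicit, but its stated chain is exactly yours: Theorem \ref{MainTheorem} gives CMC, the normal biharmonic equation gives $|A|^2=m$ hence constant scalar curvature, the local result of \cite{AlmeidaBritoScherfnerWeiss2020} gives isoparametricity in the three-curvature case, and the classification of \cite{IchiyamaInoguchiUrakawa2009}, \cite{IchiyamaInoguchiUrakawa2010} finishes). Your extra care with the $\ell\le 2$ case and the real-analytic continuation argument is a welcome filling-in of details the paper omits.
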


	\noindent {Before giving a slightly different proof of Theorem \ref{MainTheorem}, we will recall the fundamental equations of hypersufaces in space forms.}
	\begin{itemize}
		\item The Gauss Equation:
		\begin{equation}\label{GaussEquation}
			R(X, Y) Z = c \left ( \left \langle Y, Z \right \rangle X - \left \langle X, Z \right \rangle Y \right ) + \left \langle A(Y), Z \right \rangle A(X) - \left \langle A(X), Z \right \rangle A(Y),
		\end{equation} 
		for any $X, Y, Z \in C(TM)$.
		
		\item The Codazzi Equation: 
		\begin{equation} \label{CodazziEquation}
			\left ( \nabla _X A\right ) (Y) = \left ( \nabla _Y A \right ) (X),
		\end{equation}
		for any $X, Y \in C(TM)$.
	\end{itemize}
	
	\section{Proof of the theorem \ref{MainTheorem}}
		\noindent {For the sake of completeness, we will present here a detailed proof of the result. Basically, excepting the very last part, it coincides with the original proof but it is organized slightly different.}
		
		For an arbitrary hypersurface $\varphi : M^m \to N^{m+1} (c)$, we denote by 
		$$
		\overline k_1 \geq \overline k_2 \geq \ldots \geq \overline k_m
		$$ 
		its principal curvatures. The functions $\{ \overline k_i\}_{i\in \overline {1,m}}$ are continuous on $M$, for any $i\in \overline {1,m}$, but not necessarily smooth everywhere. The set of all points at which the number of distinct principal curvatures is locally constant is an open and dense subset of $M$. We will denote by $M_A$ this set. On a non-empty connected component of $M_A$, which is open in $M_A$, thus in $M$, the number of distinct principal curvatures is constant. Therefore, on that connected component, the multiplicities of the distinct principal curvatures are constant and so the $\overline k_i$'s are smooth and $A$ is smoothly locally diagonalizable (see \cite{Nomizu}, \cite{RyanTohoku}, \cite{RyanOsaka}).
		
		We will show that $\grad f = 0$ on every connected component of $M_A$ and thus, from density, $\grad f = 0$ on $M$, i.e. $f$ is constant.
		
		We choose an arbitrary connected component of $M_A$. Because $M$ has at most three distinct principal curvatures, on this component we have: either each of its points is umbilical, or each of its points has exactly two distinct principal curvatures, or each of its points has exactly three distinct principal curvatures. For simplicity, we will denote by $M$ the chosen connected component.
		
		If $M$ is umbilical or if $M$ has exactly two distinct principal curvatures at any point, then the result is already proved (see \cite{BalmusMontaldoOniciuc}).
		
		We suppose now that $M$ has exactly three distinct principal curvatures at any point. In this case, $A$ is (locally) diagonalizable with respect to an orthonormal frame field $\{ E_1, \ldots, E_m\}$, thus $A(E_i) = \overline k_i E_i$, for any $i \in \overline {1,m}$.
		
		Assume, by way of contradiction, that $\grad f \neq 0$ and, at the end of the proof, we will get a contradiction. If necessary, we can restrict ourselves to an open subset, also denoted (for simplicity) by $M$, and we can assume that $\grad f \neq 0$ at any point of $M$. Using a similar argument, we can suppose that $f = |H| > 0$ on $M$.
		
		We will denote by $\mathcal D$ the distribution orthogonal to that determined by $\grad f$. It is known that $\mathcal D$ is completely integrable (see \cite{HasanisVlachos1995}, \cite{Nistor}).
				
		Next, for any $p \in M$ we will denote by
		\begin{align*}
			k_1(p) &= \overline {k}_1 (p) = \ldots = \overline {k}_{m_1} (p)\\
			k_2(p) &= \overline {k}_{m_1+1} (p) = \ldots = \overline {k}_{m_1+m_2} (p)\\
			k_3(p) &= \overline {k}_{m_1+m_2+1} (p) = \ldots = \overline {k}_{m} (p)
		\end{align*}
		the distinct principal curvatures at the point $p$, $m_1 + m_2 + m_3 = m$. The multiplicities $m_1$, $m_2$, $m_3$ are constant functions on $M$.
		
		We know that $A(\grad f) = -\frac {m} {2} f \grad f$, thus we can suppose that 
		$$
		k_1 = -\frac {m} {2} f \quad \text{and} \quad E_1 = \frac {\grad f} {|\grad f|}
		$$
		on $M$.
		
		In the first part of the proof we will try to get as much information as we can only from the tangent part of the biharmonic equation.
		
		We will prove that $E_1(k_1) \neq 0$ at any point of $M$.
		$$
		E_1(k_1) = -\frac {m} {2} E_1(f) = -\frac {m} {2} |\grad f| \neq 0
		$$
		at any point of $M$.
		
		We mention that all the following formulas hold on $M$, if it is not stated otherwise.
		
		From the definition of $E_1$ we also get that for any $\alpha \in \overline {2, m}$, 
		$$
		E_\alpha (f) = 0.
		$$
		We define $\omega ^k_j : C(TM) \to \mathbb {R}$ such that $\nabla _X E_j = \omega ^k_ j (X) E_k$. It is easy to prove that $\omega ^k_j$ is a one-form and that it has the property $\omega ^j_i = - \omega ^i_j$, for any $ i, j \in \overline {1,m}$.
		
		From Codazzi equation \eqref{CodazziEquation} we obtain for any $i, j \in \overline {1, m}$ 
		$$
		E_i(\overline {k}_j ) E_j + \sum _{\ell =1} ^m (\overline k_j - \overline k_\ell) \omega ^\ell_j (E_i) E_\ell = E_j(\overline k_i) E_i + \sum _{\ell =1} ^m (\overline k_i - \overline k_\ell) \omega ^\ell _i (E_j)E_\ell.
		$$
		Considering the fact that $\left \{ E_i \right \}_{i \in \overline {1, m}}$ is an orthonormal frame field, we get
		
		\begin{align}
			&E_i(\overline k_j) = ( \overline k _i - \overline k _j) \omega ^j_i (E_j),\label{EquationCodazzi1}\\
			&(\overline k _j - \overline k _\ell) \omega ^\ell _j (E_i) = (\overline k_i - \overline k_\ell) \omega ^\ell _i (E_j),\label{EquationCodazzi2}
		\end{align}
		for any mutually distinct $i, j, \ell \in \overline {1, m}$. In these relations we do not use the Einstein summation convention.
		
		Next, we will show that the multiplicity of $k_1$ is $m_1 = 1$. We suppose that $m_1 \geq 2$. Let $\alpha \neq 1$ be such that $\overline k_\alpha = k_1$. In (\ref{EquationCodazzi1}), for $i = 1$ and $j = \alpha$, we have
		$$
		E_1(\overline k_\alpha) = (\overline k_1 - \overline k_\alpha) \omega ^j_i (E_j)
		$$
		which is equivalent to $E_1(k_1) = 0$, contradiction. Therefore, $m_1 = 1$.
		
		We set $r = 1+m_2$, so we have 
		\begin{equation*} 
			k_2 (p) = \overline k_2 (p) = \ldots = \overline k_r (p)
		\end{equation*}
		and
		\begin{equation*}
			k_3 (p) = \overline k_{r+1} (p) = \ldots = \overline k_m (p),	
		\end{equation*}
		for $(m+1) / 2 \leq r < m$. Thus, the multiplicities of $k_2$ and $k_3$ are $r-1$ and $m-r$, respectively.
		
		From the definition of the mean curvature function we have
		\begin{align*}
			mf =& \trace A = -\frac m 2 f + (r-1)k_2 + (m-r)k_3, 
		\end{align*}
		thus 
		\begin{equation}\label{CurvaturesDependency}
			k_3 = \frac {3m} {2(m-r)} f - \frac {r-1} {m-r} k_2.
		\end{equation}
		Using the fact that $k_2 \neq k_1$, $k_3 \neq k_1$ and $k_3 \neq k_2$, we obtain that 
		\begin{equation}\label{CurvaturesNEQ}
			k_2 \neq - \frac m 2 f,\quad k_2 \neq \frac {m(m-r+3)} {2(r-1)} f\quad \text{and} \quad k_2 \neq \frac {3m} {2(m-1)} f
		\end{equation}
		at any point of $M$.
		
		Our goal is to find two polynomial equations in the variables $f$ and $k_2$ which have common solutions. Therefore, their resultant must vanish. It turns out that the resultant is a polynomial equation in $f$ and, by continuity, $f$ must be constant which contradicts $\grad f \neq 0$.
		
		We will use the tangent part of the biharmonic equation to obtain the properties of the connection forms.
		\begin{lemma}
			The connection forms $\omega _i^j$ have the following properties:
			\begin{equation}\label{EquationOmega1}
				\omega ^1_j (E_i) = \omega ^1_i(E_j),\quad \forall i, j \in \overline {2, m},
			\end{equation}
			\begin{equation}\label{EquationOmega2}
				\omega ^1_i (E_1) = 0,\quad \forall i \in \overline {2, m},
			\end{equation}
			\begin{equation}\label{Equation3}
				 E_i(E_1(f)) = 0,\quad \forall i \in \overline {2, m},
			\end{equation}
			\begin{equation}\label{EquationOmega3} 
				\omega ^1_j (E_i) = 0,\quad \forall i, j \in \overline {2, m},\ i\neq j ,
			\end{equation}
			\begin{equation}\label{EquationOmega5}
				\omega ^i_1 (E_i) = \frac {E_1(\overline k _i)} {k_1 - \overline k_i},\quad \forall i \in \overline {2, m} ,
			\end{equation}
			\begin{equation}\label{EquationOmega7}
				\omega^2_m (E_2) = \frac {E_m(k_2)} {k_3 - k_2}.
			\end{equation} 
		\end{lemma}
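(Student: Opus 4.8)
The plan is to read off all six relations from three inputs only: the two Codazzi equations \eqref{EquationCodazzi1} and \eqref{EquationCodazzi2}, the consequences of the tangent part of the biharmonic equation already recorded on $M$ (namely $\overline k_1=k_1=-\frac m2 f$, $E_1=\grad f/|\grad f|$, and $E_\alpha(f)=0$ for every $\alpha\in\overline{2,m}$), and torsion-freeness of $\nabla$ tested against $f$. Throughout I abbreviate $g:=E_1(f)=|\grad f|>0$. Rather than follow the printed order, I would treat first the identities that are immediate and postpone the only delicate one, \eqref{EquationOmega3}.

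The two ``quotient'' formulas \eqref{EquationOmega5} and \eqref{EquationOmega7} are just \eqref{EquationCodazzi1} specialised. With $(i,j)=(1,i)$ the Codazzi relation reads $E_1(\overline k_i)=(k_1-\overline k_i)\,\omega^i_1(E_i)$, and dividing by $k_1-\overline k_i\neq0$ gives \eqref{EquationOmega5}; with $(i,j)=(m,2)$ it reads $E_m(k_2)=(k_3-k_2)\,\omega^2_m(E_2)$, and dividing by $k_3-k_2\neq0$ gives \eqref{EquationOmega7}. For \eqref{EquationOmega2} I specialise \eqref{EquationCodazzi1} to $(i,j)=(i,1)$ with $i\in\overline{2,m}$: its left-hand side is $E_i(\overline k_1)=-\frac m2 E_i(f)=0$, while the coefficient $\overline k_i-\overline k_1\neq0$, forcing $\omega^1_i(E_1)=0$.

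Relations \eqref{EquationOmega1} and \eqref{Equation3} I would obtain from Lie brackets rather than from Codazzi. Since $E_\alpha(f)=0$ for all $\alpha\geq2$, for $i,j\in\overline{2,m}$ one has $[E_i,E_j](f)=E_i(E_jf)-E_j(E_if)=0$; expanding $[E_i,E_j](f)=g\langle E_1,[E_i,E_j]\rangle=g\bigl(\omega^1_j(E_i)-\omega^1_i(E_j)\bigr)$ and using $g\neq0$ yields \eqref{EquationOmega1} (this simultaneously re-proves the integrability of $\mathcal D$). Likewise $[E_1,E_i](f)=-E_i(E_1(f))$, while on the other hand $[E_1,E_i](f)=g\,\omega^1_i(E_1)=0$ by \eqref{EquationOmega2}; combining the two gives \eqref{Equation3}.

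The genuine obstacle is \eqref{EquationOmega3}, and I would split it according to whether $E_i,E_j$ (with $i\neq j$, both in $\overline{2,m}$) belong to different eigendistributions or to the same one. If $\overline k_i\neq\overline k_j$, I set $\ell=1$ in \eqref{EquationCodazzi2}, so that $(\overline k_j-\overline k_1)\,\omega^1_j(E_i)=(\overline k_i-\overline k_1)\,\omega^1_i(E_j)$; substituting the already established \eqref{EquationOmega1} the two sides collapse to $(\overline k_j-\overline k_i)\,\omega^1_j(E_i)=0$, and $\overline k_j-\overline k_i\neq0$ finishes this case. The trouble is exactly when $\overline k_i=\overline k_j$ (so $E_i,E_j$ lie in the common $k_2$- or $k_3$-eigenspace, which requires the corresponding multiplicity to be at least $2$): there the factor $\overline k_j-\overline k_i$ degenerates and the previous computation becomes vacuous. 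The way out is to feed \eqref{EquationCodazzi2} a different assignment of its three indices — putting $1$ in the slot of the formula's index $j$ and the original $j$ in the slot of $\ell$ — which, after the antisymmetry $\omega^j_1=-\omega^1_j$, produces $(\overline k_j-\overline k_1)\,\omega^1_j(E_i)=(\overline k_i-\overline k_j)\,\omega^j_i(E_1)$. Now it is the coefficient on the right that vanishes, because $\overline k_i=\overline k_j$, whereas on the left $\overline k_j-\overline k_1\neq0$; hence $\omega^1_j(E_i)=0$ with no need to control the off-diagonal form $\omega^j_i(E_1)$. The one point requiring care is precisely this bookkeeping: in each case one must arrange the indices of the second Codazzi equation so that the unknown connection form is multiplied by a nonzero difference of principal curvatures while every term one cannot control is multiplied by a vanishing one.
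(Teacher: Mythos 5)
Your argument is correct and, for five of the six identities, coincides with the paper's proof: \eqref{EquationOmega2}, \eqref{EquationOmega5} and \eqref{EquationOmega7} by direct specialisation of \eqref{EquationCodazzi1}, and \eqref{EquationOmega1}, \eqref{Equation3} by testing the torsion-free bracket against $f$ and using $E_\alpha(f)=0$, $E_1(f)\neq 0$. The one place where you genuinely diverge is \eqref{EquationOmega3}, and there your treatment is strictly more complete than the paper's. The paper substitutes $\ell=1$ into \eqref{EquationCodazzi2}, combines with \eqref{EquationOmega1}, and stops at $(\overline k_j-\overline k_i)\,\omega^1_j(E_i)=0$, declaring \eqref{EquationOmega3} proved; this settles only the case $\overline k_i\neq\overline k_j$, whereas the degenerate case $\overline k_i=\overline k_j$ genuinely occurs (the multiplicity of $k_2$ is $r-1\geq 2$, and that of $k_3$ may also exceed $1$), and for it the displayed identity is vacuous. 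Your second specialisation of \eqref{EquationCodazzi2} --- indices $(i,1,j)$ in place of $(i,j,\ell)$, which after the antisymmetry $\omega^j_1=-\omega^1_j$ yields $(\overline k_j-\overline k_1)\,\omega^1_j(E_i)=(\overline k_i-\overline k_j)\,\omega^j_i(E_1)$ --- closes exactly this gap: the uncontrollable form $\omega^j_i(E_1)$ is now multiplied by the vanishing difference $\overline k_i-\overline k_j$, while the unknown $\omega^1_j(E_i)$ carries the nonzero coefficient $\overline k_j-k_1$. The index constraints of \eqref{EquationCodazzi2} (mutual distinctness of the three indices) are satisfied by your assignment, so the step is legitimate. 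In short: same strategy, but your case split on whether $E_i$ and $E_j$ lie in the same eigendistribution supplies a detail the paper's proof of \eqref{EquationOmega3} leaves implicit.
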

		\begin{proof}
			For any $i, j \in \overline {2, m}$ we have
			$$
			[E_i, E_j] (f) = E_i(E_j(f)) - E_j(E_i(f)) = 0.
			$$
			On the other hand, 
			\begin{align*}
				[E_i, E_j](f) &= \left ( \nabla _{E_i} E_j - \nabla _{E_j} E_i\right ) (f)\\
				&= \left ( \omega ^\ell _j (E_i) - \omega ^\ell _i (E_j)\right ) E_\ell (f)\\
				&= \left ( \omega ^1_j (E_i) - \omega ^1_i(E_j)\right) E_1(f).
			\end{align*}
			The fact that $E_1(f) \neq 0$ at any point of $M$ implies (\ref{EquationOmega1}).
			
			For $j=1$ and $i\neq 1$ in (\ref{EquationCodazzi1}), we obtain $E_i (k_1) = (\overline k_i - k_1) \omega ^1 _i (E_1)$. Because $E_i(k_1) = 0$, for any $ i \in \overline {2,m}$, we obtain (\ref{EquationOmega2}).
			
			We have
			\begin{align*}
				[E_1, E_i](f) &= \left ( \nabla _{E_1} E_i - \nabla _{E_i} E_1\right ) (f)\\
				&= \left ( \omega ^\ell _i (E_1) - \omega ^\ell _1 (E_i)\right ) E_\ell (f)\\
				&= \left ( \omega ^1 _i (E_1) - \omega ^1_1(E_i)\right ) E_1(f)\\
				&= \omega ^1_i (E_1) E_1(f).
			\end{align*}
			Using (\ref{EquationOmega2}), it is easy to prove (\ref{Equation3}).
			
			For $\ell = 1$, $j \neq 1$, $i \neq 1$ and $i\neq j$ in (\ref{EquationCodazzi2}) we have
			$$
			(\overline k_j - k_1) \omega ^1_j (E_i) = (\overline k _i - k_1) \omega ^1_i (E_j)
			$$
			and, together with (\ref{EquationOmega1}), it implies that 
			
			$$
			(\overline k _j - \overline k_i ) \omega ^1_j(E_i) = 0,
			$$
			which proves (\ref{EquationOmega3}). 
			
			From (\ref{EquationCodazzi1}), we get that 
			$$
			E_1(\overline k_i) = (k_1 - \overline k _i) \omega ^i_1(E_i)
			$$
			which is equivalent to (\ref{EquationOmega5}).
			
			From (\ref{EquationCodazzi1}) we get (\ref{EquationOmega7}).
		\end{proof}
	
		Using both the normal and the tangent parts of the biharmonic equation, we show that the function $k_2$ is constant along the leaves of $\mathcal D$.
		
		\begin{lemma}\label{LemmaLeavesD}
			We have $E_i(k_2) = 0$, for any $i \in \overline {2,m}$.
		\end{lemma}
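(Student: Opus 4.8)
The plan is to prove $E_i(k_2)=0$ for every $i\in\overline{2,m}$ by treating separately the directions that lie in each eigenspace of $A$. I will repeatedly use that $E_i(f)=0$ for $i\geq2$, hence $E_i(k_1)=-\frac{m}{2}E_i(f)=0$, and that differentiating \eqref{CurvaturesDependency} along $E_i$ yields $E_i(k_3)=-\frac{r-1}{m-r}E_i(k_2)$; thus for each fixed $i$ the conditions $E_i(k_2)=0$ and $E_i(k_3)=0$ are equivalent.

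First I would dispose of every direction sitting in an eigenspace of multiplicity at least two. Recall that $\overline k_i=k_2$ for $i\in\overline{2,r}$ and $\overline k_i=k_3$ for $i\in\overline{r+1,m}$, and that $m_2=r-1\geq2$ always holds, since $r\geq(m+1)/2$ and $m\geq4$. For $i\in\overline{2,r}$ pick $j\in\overline{2,r}$ with $j\neq i$; then $\overline k_j=\overline k_i=k_2$, so the Codazzi relation \eqref{EquationCodazzi1} gives $E_i(k_2)=E_i(\overline k_j)=(\overline k_i-\overline k_j)\omega^j_i(E_j)=0$. The identical argument inside the $k_3$-eigenspace gives $E_i(k_3)=0$, hence $E_i(k_2)=0$, for all $i\in\overline{r+1,m}$ as soon as $m_3=m-r\geq2$.

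This leaves exactly one case: the single direction $E_m$ when $m_3=1$, for which the $k_3$-eigenspace is one-dimensional and the sibling trick is no longer available. This is the main obstacle, and it is where the normal part of \eqref{BiharmonicEquationHypersurfaces} must enter. Using \eqref{EquationOmega2}, \eqref{EquationOmega3} and \eqref{EquationOmega5} together with \eqref{EquationCodazzi2} one first computes the relevant covariant derivatives,
\[
\nabla_{E_1}E_m=0,\qquad \nabla_{E_m}E_1=\frac{E_1(k_3)}{k_1-k_3}E_m,\qquad \nabla_{E_m}E_m=\frac{E_1(k_3)}{k_3-k_1}E_1,
\]
so that $[E_1,E_m]$ is a multiple of $E_m$, and one obtains the expression
\[
\Delta f=-E_1\bigl(E_1(f)\bigr)+\Bigl(\sum_{i=2}^{m}\frac{E_1(\overline k_i)}{\overline k_i-k_1}\Bigr)E_1(f).
\]

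Substituting this into the normal equation (i) of \eqref{BiharmonicEquationHypersurfaces} and differentiating along $E_m$ --- using $E_m(f)=0$, the identity \eqref{Equation3} which gives $E_m(E_1(f))=0$, and the fact that $[E_m,E_1]$ annihilates $E_1(f)$, whence $E_m(E_1(E_1(f)))=0$ --- reduces everything to a relation between $E_m(k_2)$ and $E_1(E_m(k_2))$. On its own this is only a first-order homogeneous relation along the flow of $E_1$ and therefore cannot yet force the vanishing; the decisive and, I expect, most delicate step is to supplement it with the Gauss equation \eqref{GaussEquation} evaluated on the planes spanned by $E_m$ together with $E_1$ and with some $E_\alpha$, $\alpha\in\overline{2,r}$. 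After clearing all the connection forms, these curvature identities should furnish the additional constraint needed to eliminate the derivative term and conclude $E_m(k_2)=0$, completing the proof. The hard part is precisely this bookkeeping: balancing the normal biharmonic equation against the Gauss equation in the single exceptional direction while keeping track of every connection form.
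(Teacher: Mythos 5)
Your treatment of the directions lying in an eigenspace of multiplicity at least two is correct and is exactly the paper's argument: Codazzi applied to two distinct indices carrying the same principal curvature kills $E_i(k_2)$ for $i\in\overline{2,r}$ (using $r-1\geq 2$), and the same trick combined with \eqref{CurvaturesDependency} handles $i\in\overline{r+1,m}$ when $m-r\geq2$. You also correctly isolate the genuinely hard case $m-r=1$ and correctly identify the two ingredients the paper uses there, namely the normal equation \eqref{BiharmonicEquationHypersurfaces}(i) differentiated along $E_m$ and the Gauss equation applied to $R(E_2,E_m)E_1=0$; your preliminary formulas for $\nabla_{E_1}E_m$, $\nabla_{E_m}E_1$, $\nabla_{E_m}E_m$ and $\Delta f$ are all right.

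The gap is in the endgame. The identity $R(E_2,E_m)E_1=0$ yields \eqref{RelationDer1}, which expresses $E_m(E_1(k_2))$ as a known multiple of $E_m(k_2)$; substituting this into the $E_m$-derivative of the normal equation does not ``conclude $E_m(k_2)=0$'' --- it only produces a relation of the form $C\cdot E_m(k_2)=0$, where
\[
C=\frac{2}{k_1-k_3}\left(\frac{E_1(k_3)}{k_1-k_3}-\frac{E_1(k_2)}{k_1-k_2}\right)E_1(f)+\bigl(-3mf+2(m-1)k_2\bigr)f,
\]
and there is no a priori reason for $C$ to be nonzero. The paper's actual crux, which your sketch omits, is to suppose $E_m(k_2)\neq0$ on an open set, so that $C=0$ there (relation \eqref{RelationDer4}), to differentiate $C=0$ once more along $E_m$, and to combine the two resulting identities with \eqref{CurvaturesDependency} so that every $E_1$-derivative term cancels, leaving the purely algebraic relation $(m-2)\left(-9mf+6(m-1)k_2\right)f=0$, i.e. $k_2=\frac{3mf}{2(m-1)}$; this contradicts \eqref{CurvaturesNEQ}, since it is exactly the forbidden equality $k_2=k_3$. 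Without this second differentiation and the appeal to the non-degeneracy of the three principal curvatures, the argument stops one step short: ``should furnish the additional constraint needed'' is precisely the part that still has to be proved.
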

		\begin{proof} 
			Because $m\geq 4$ and $(m+1) / 2 \leq r$, we obtain $r-1 \geq 2$. From (\ref{EquationCodazzi1}) for any $i, j \in \overline {2,r}$, we get 
			\begin{equation}\label{Equation1}
				E_i(k_2) = 0.
			\end{equation}
			\textbf{Case 1:} If $m-r \geq 2$, then from (\ref{EquationCodazzi1}), for any $ i, j \in \overline {r+1, m}$, we have
			\begin{align*}
				E_i(k_3) = 0 \Leftrightarrow& E_i \left ( \frac {3m} {2(m-r)} f - \frac {r-1} {m-r} k_2\right ) = 0\\
				\Leftrightarrow & \frac {3m} {2(m-r)} E_i(f) - \frac {r-1} {m-r} E_i(k_2) = 0.	
			\end{align*}
			Therefore $E_i(k_2) = 0 \text{ on } M,$ for any $ i \in \overline {2,m}$.
			
			\textbf{Case 2:} If $m-r = 1$.
			
			Using (\ref{Equation1}) we obtain that 
			\begin{equation}\label{Equation2}
				E_i(k_2) = 0,\quad \forall i \in \overline {2, m-1}.
			\end{equation}
			We have 
			\begin{align*}
				[E_i, E_1] (k_2) &= E_i(E_1(k_2)) - E_1(E_i(k_2)) = E_i(E_1(k_2)),\ \forall i \in \overline {2, m-1}.
			\end{align*}
			On the other hand,
			\begin{align*}
				[E_i, E_1] (k_2) &= \left ( \nabla _{E_i} E_1 - \nabla _{E_1} E_i\right ) (k_2)\\
				&= \left ( \omega ^\ell _1(E_i) - \omega ^\ell _i (E_1)\right ) E_\ell (k_2)\\
				&= \left ( \omega ^1_1 (E_1) - \omega ^1_i(E_1)\right ) E_1(k_2) + \left ( \omega ^m_1(E_i) - \omega ^m_i(E_1)\right ) E_m(k_2)\\
				&= -\omega ^m_i (E_1) E_m(k_2).
			\end{align*}
			For $\ell = m$, $j=1$ and $i \in \overline {2, m-1}$ in (\ref{EquationCodazzi1}) we have
			$$
			(k_1 - k_3) \omega ^m_1(E_i) = (k_2 - k_3) \omega ^m_i(E_1),
			$$
			which implies $\omega ^m_i (E_1) = 0.$
			
			Thus, 
			\begin{equation}\label{Equation4}
				E_i(E_1(k_2)) = 0,\quad \forall i \in \overline {2, m-1}.
			\end{equation}
			For $i = m$ and $j = 2$ in (\ref{EquationCodazzi2}) we obtain
			$$
			(k_2 - \overline k_\ell) \omega ^\ell _2 (E_m) = ( k_3 - \overline k_\ell) \omega ^\ell _m(E_2),\quad \forall \ell \in \{ 1, 3,\ldots, m-1\}.
			$$
			This implies that
			$$
			\omega ^\ell _m (E_2) = 0,\quad \forall \ell \in \overline {3, m-1}.
			$$
			We know from \eqref{EquationOmega3} that $\omega ^1_m (E_2) = \omega ^m_m(E_2) = 0$, thus
			\begin{equation}\label{EquationOmega6}
				\omega ^\ell_m(E_2) = 0,\quad \forall \ell \in \{1, 3, \ldots, m\}.
			\end{equation}
			From Gauss equation \eqref{GaussEquation} we get that 
			\begin{align}
				R(E_2, E_m) E_1 =& c\left ( \left \langle E_m, E_1 \right \rangle E_2 - \left \langle E_2, E_1 \right \rangle E_m \right ) + \left \langle A(E_m), E_1 \right \rangle A(E_2) - \label{EquationCurvature1}\\
								 & - \left \langle A(E_2), E_1 \right \rangle E_m\notag\\
								=& 0. \notag
			\end{align}
			We will compute the curvature using the definition, i.e. 
			$$
			R(E_2, E_m) E_1 = \nabla _{E_2} \nabla _{E_m} E_1 - \nabla _{E_m} \nabla _{E_2} E_1 - \nabla _{[E_2, E_m]} E_1.
			$$
			Using \eqref{EquationOmega3}, \eqref{EquationOmega5}, \eqref{EquationOmega7}, \eqref{Equation4}, \eqref{EquationOmega6}  we obtain
			\begin{align*}
				\nabla_{E_2} \nabla _{E_m} E_1 =& \nabla _{E_2} \left ( \omega ^\ell _1 (E_m) E_\ell \right )\\
				=& E_2\left(\omega ^\ell _1 (E_m)\right)	E_\ell + \omega ^\ell _1 (E_m) \nabla _{E_2} E_\ell\\
				=& E_2\left ( \omega ^1_1 (E_m)\right ) E_1  + E_2\left ( \omega ^m _1 (E_m)\right ) E_m + \omega ^\ell _1 (E_m) \omega ^j _\ell (E_2) E_j\\
				=& E_2 \left ( \frac {E_1(k_3)} {k_1 - k_3} \right ) E_m + \omega ^1 _1 (E_m) \omega ^j_1 (E_2) E_j + \omega ^m_1(E_m) \omega ^j_m (E_2) E_j \\
				=& E_2 \left ( \frac {E_1(k_3)} {k_1 - k_3} \right) E_m + \frac {E_1(k_3)} {k_1 - k_3} \omega ^2_m (E_2) E_2.
			\end{align*}
			From (\ref{CurvaturesDependency}), (\ref{Equation3}) and (\ref{Equation4}), we have
			$$
			E_i(E_1(k_3)) = 0,\quad \forall i \in \overline {2, m-1}.
			$$
			Therefore, 
			$$
			\nabla _{E_2} \nabla _{E_m} E_1 = \frac {E_1(k_3)} {k_1 - k_3} \omega ^2_m(E_2) E_2 = \frac {E_1(k_3) E_m(k_2)} {(k_1 - k_3) (k_3-k_2)} E_2.
			$$
			Now we will compute the second term from the curvature formula.
			\begin{align*}
				\nabla _{E_m} \nabla _{E_2} E_1 =& \nabla _{E_m} \left ( \omega ^\ell _1 (E_2) E_\ell\right )\\
				=& E_m\left ( \omega ^\ell _1 (E_2)\right ) E_\ell + \omega ^\ell _1 (E_2) \nabla _{E_m} E_\ell \\
				=& E_m \left ( \omega ^\ell _1 (E_2)\right ) E_\ell + \omega ^\ell _1 (E_2) \omega ^j _\ell (E_m) E_j\\
				=& E_m \left ( \omega ^1 _1 (E_2)\right ) E_1 + E_m \left ( \omega ^2_1(E_2)\right ) E_2 + \omega ^1_1 (E_2) \omega ^j_1(E_m) E_j + \\
				& + \sum _{j = 3} ^m \omega ^2_1 (E_2) \omega ^j _2 (E_m) E_j\\
				=& E_m \left ( \frac {E_1 (k_2)} {k_1 - k_2} \right ) E_2 + \frac {E_1 (k_2)} {k_1 - k_2} \sum _{j=3} ^m \omega ^j _2 (E_m) E_j.
			\end{align*}
			For the final term we have
			\begin{align*}
				\nabla _{[E_2, E_m]} E_1 =& \nabla _{\left ( \nabla _{E_2} E_m - \nabla _{E_m} E_2\right )} E_1\\
				=& \nabla _{\left ( \omega ^\ell _m (E_2) - \omega ^\ell _2 (E_m) \right ) E_\ell} E_1\\
				=& \omega ^\ell _m (E_2) \nabla _{E_\ell} E_1 - \omega ^\ell _2 (E_m) \nabla _{E_\ell} E_1\\
				=& \omega ^\ell _m (E_2) \omega ^j_1(E_\ell) E_j - \omega ^\ell _2 (E_m) \omega ^j_1 (E_\ell) E_j\\	 
				=& \omega ^2_m (E_2) \omega ^j _1(E_2) E_j - \omega ^\ell _2 (E_m) \omega ^1_1 (E_\ell) E_1 -\\
				& - \omega ^1 _2(E_m) \omega ^j_1(E_1) E_j - \sum _{\ell =2} ^m \omega ^\ell _2 (E_m) \omega ^\ell _1 (E_\ell) E_\ell\\
				=& \frac {E_m(k_2) E_1(k_2)} {(k_3 - k_2) (k_1 -k_2)} E_2 - \frac {E_1(k_2)} {k_1 - k_2} \sum _{\ell = 3} ^{m-1} \omega ^\ell _2 (E_m) E_\ell - \frac {E_1(k_3)} {k_1 - k_3} \omega ^m_2 (E_m) E_m.
			\end{align*}
			Therefore, (\ref{EquationCurvature1}) is equivalent to
			\begin{align*}
				& \nabla _{E_2} \nabla _{E_m} E_1 - \nabla _{E_m} \nabla _{E_2} E_1 - \nabla _{[E_2, E_m]} E_1 = 0\\
				\Leftrightarrow & \frac {E_1(k_3) E_m(k_2)} {(k_1 - k_3) (k_3 - k_2)} E_2 - E_m\left ( \frac {E_1(k_2)} {k_1 -k_2} \right ) E_2 - \frac {E_1(k_2)} {k_1 - k_2} \sum _{\ell = 3} ^m \omega ^\ell _2 (E_m) E_\ell -\\
				& - \frac {E_m(k_2) E_1 (k_2)} {(k_3 - k_2) (k_1 - k_2)} E_2 + \frac {E_1(k_2)} {k_1 - k_2} \sum _{\ell = 3} ^{m-1} \omega ^\ell _2 (E_m) E_\ell + \frac {E_1(k_3)} {k_1 -k_3} \omega ^m _2 (E_m) E_m = 0\\
				\Leftrightarrow & \left ( \frac {E_1(k_3) E_m(k_2)} {(k_1 - k_3) (k_3 - k_2)} - E_m \left ( \frac {E_1(k_2)} {k_1 - k_2}\right ) - \frac {E_m(k_2) E_1(k_2)} {(k_3 -k_2) (k_1 - k_2)} \right ) E_2 +\\
				& + \left ( \frac {E_1(k_3)} {k_1 - k_3} - \frac {E_1 (k_2)} {k_1 - k_2} \right ) \omega ^m _2 (E_m) E_m = 0.
			\end{align*}
			The fact that $E_2$ and $E_m$ are linearly independent implies
			\begin{equation}\label{RelationDer1}
				E_m \left ( \frac {E_1 (k_2)} {k_1 - k_2} \right ) = \frac {1} {k_3 - k_2} \left ( \frac {E_1(k_3)} {k_1 - k_3} - \frac {E_1(k_2)} {k_1 - k_2}\right ) E_m(k_2).
			\end{equation}
			Taking into account that 
			$$
			|A|^2 = \sum _{i=1} ^m \left \langle AE_i, AE_i \right \rangle = \sum _{i=1} ^m \overline k_i^2 = k_1^2 + (m-2)k_2^2 + k_3 ^2
			$$
			and
			\begin{align*}
				\Delta f =& - E_1(E_1(f)) + \left (\nabla _{E_1} E_1 \right ) f - \sum _{i=2} ^{m-1} \left ( E_i(E_i(f)) - \left ( \nabla _{E_i} E_i \right ) f \right ) -\\
				& -E_m (E_m(f)) + \left ( \nabla _{E_m} E_m \right ) f\\
				=& - E_1(E_1(f)) - \left ( \frac {(m-2)E_1(k_2)} {k_1 - k_2} + \frac {E_1(k_3)} {k_1 - k_3}\right ) E_1 (f),
			\end{align*}
			replacing in \eqref{BiharmonicEquationHypersurfaces}(i) we get
			\begin{equation}\label{RelationDer2}
				-E_1(E_1(f)) - \left ( \frac {(m-2) E_1(k_2)} {k_1 - k_2} + \frac {E_1(k_3)} {k_1 - k_3} \right ) E_1(f) + (k_1 ^2 + (m-2)k_2^2 + k_3^2) f = mcf.
			\end{equation}
			We will compute $E_m(E_1(E_1(f)))$. We have
			\begin{align*}
				[E_1, E_m] (E_1(f)) =& \left ( \nabla _{E_1} E_m - \nabla _{E_m} E_1 \right ) (E_1(f)) \\
				=& \left ( \omega ^j_m(E_1) - \omega ^j _1 (E_m)\right ) E_j(E_1(f)) \\
				=& \left ( \omega ^1_m (E_1) - \omega ^1_1 (E_m)\right ) E_1(E_1(f)) = 0.
			\end{align*}
			On the other hand
			$$
			[E_1, E_m] (E_1(f)) = E_1(E_m(E_1(f))) - E_m(E_1(E_1(f))) = - E_m(E_1(E_1(f))).
			$$
			Thus,
			\begin{equation}\label{Equation5}
				E_m(E_1(E_1(f))) = 0.
			\end{equation}
			Differentiating \eqref{RelationDer2} by $E_m$ we obtain
			\begin{align*}
				& -E_m(E_1(E_1(f))) - E_m \left ( \frac {(m-2)E_1(k_2)} {k_1 - k_2} + \frac {E_1(k_3)} {k_1 - k_3}\right ) E_1 (f) - \\
				& -\left ( \frac {(m-2) E_1(k_2) } {k_1 - k_2} + \frac {E_1(k_3)} {k_1 - k_3}\right ) E_m (E_1 (f)) + \\
				& + 2(k_1 E_m(k_1) + (m-2)k_2 E_m(k_2) + k_3 E_m(k_3)) f + \\
				& + (k_1 ^2 + (m-2) k_2 ^2 + k_3^2 ) E_m(f) = mc E_m(f),
			\end{align*}
			which using \eqref{Equation5} leads to
			\begin{align}
				& - E_m \left (\frac {(m-2)E_1(k_2)} {k_1 -k_2}\right )E_1(f) - E_m\left (\frac {E_1(k_3)} {k_1 - k_3} \right ) E_1 (f) + \label{RelationDer3}\\
				& + 2 ((m-2) k_2 E_m(k_2) + k_3 E_m (k_3))f = 0.\notag
			\end{align}
			Further, we will compute each term of \eqref{RelationDer3} separately.
			
			Using formula (\ref{RelationDer1}) we obtain
			\begin{align*}
				& - E_m \left ( \frac {(m-2) E_1(k_2)} {k_1 - k_2}\right ) =\\
				=& - (m-2) \left ( \frac {E_1(k_3)E_m(k_2)} {(k_1 - k_3) (k_3 - k_2)} - \frac {E_m(k_2) E_1(k_2)} {(k_3 - k_2) (k_1 - k_2)} \right )\\
				=& - \frac {m-2} {k_3 - k_2} \left ( \frac {E_1(k_3)} {k_1 -k_3} - \frac {E_1 (k_2)} {k_1 - k_2} \right ) E_m(k_2).
			\end{align*}
			It is easy to see that 
			$$
			E_m\left (\frac {E_1(k_2)} {k_1 -k_2}\right ) = \frac {E_m(E_1(k_2))} {k_1 - k_2} + \frac {E_1(k_2) E_m(k_2)} {(k_1 - k_2) ^2}.
			$$
			From (\ref{RelationDer1}) we get
			$$
			\frac {E_m(E_1(k_2))} {k_1 - k_2} + \frac {E_1(k_2)E_m(k_2)} {(k_1 - k_2)^2} = \frac {E_1(k_3) E_m(k_2)} {(k_1 - k_3) (k_3 - k_2)} -   \frac {E_m(k_2)E_1(k_2)} {(k_1 - k_2) (k_3 - k_2)},
			$$
			which is equivalent to
			\begin{align*}
				E_m(E_1(k_2)) =& \left ( - \frac {E_1(k_2)} {k_1 - k_2} + \frac {k_1 - k_2} {k_3 - k_2} \frac {E_1(k_3)} {k_1 - k_3} -\frac {k_1 - k_2} {k_3 - k_2} \frac {E_1(k_2)} {k_1 - k_2}\right ) E_m(k_2)	\\
				=& \left ( \frac {k_1 - k_2} {k_3 - k_2} \frac {E_1(k_3)} {k_1 - k_3} - \frac {k_1 - 2k_2 + k_3} {k_3 - k_2} \frac {E_1 (k_2)} {k_1 - k_2}\right ) E_m(k_2).	
			\end{align*}
			Now, we can compute
			\begin{align*}
				E_m\left ( \frac {E_1(k_3)} {k_1 - k_3}\right ) =& \frac {E_m(E_1(k_3))}{k_1 - k_3} - \frac {E_1(k_3)\left ( E_m(k_1) - E_m(k_3)\right )} {(k_1 - k_3)^2}\\
				=& \frac {E_m(E_1(k_3))} {k_1 - k_3} + \frac {E_1(k_3) E_m(k_3)} {(k_1 - k_3)^2}\\
				=& \frac {E_m\left (E_1\left (\frac 3 2 mf - (m-2) k_2\right )\right )} {k_1 - k_3} + \frac {E_1(k_3) E_m\left ( \frac 32 mf - (m-2)k_2\right )} {(k_1 - k_3)^2}\\
				=& - \frac {(m-2) E_m(E_1(k_2))} {k_1 - k_3} - \frac {(m-2) E_1(k_3) E_m(k_2)} {(k_1 - k_3) ^2}\\
				=& -(m-2) \frac {k_1 -2k_2 + k_3} {k_3 - k_2} \left ( \frac {E_1(k_3)} {k_1 - k_3} - \frac {E_1(k_2)} {k_1 - k_2}\right ) \frac {E_m(k_2)} {k_1 - k_3}.
			\end{align*}
			The last term of (\ref{RelationDer3}) is
			\begin{align*}
				& 2((m-2)k_2 E_m(k_2) + k_3 E_m(k_3))f =\\
				=& 2\left ( (m-2) k_2 E_m(k_2) - (m-2)\left ( \frac 3 2 mf - (m-2)k_2\right )E_m(k_2)\right ) f\\
				=& (m-2) \left ( -3mf + 2(m-1)k_2\right ) fE_m(k_2).
			\end{align*}
			Therefore, relation (\ref{RelationDer3}) becomes
			\begin{equation*}
				\frac 2 {k_1 - k_3} \left ( \frac {E_1(k_3)} {k_1 - k_3} - \frac {E_1(k_2)} {k_1 - k_2} \right ) E_1(f) E_m(k_2) + (-3mf + 2(m-1)k_2)f E_m(k_2) = 0.
			\end{equation*}
			Assuming that $E_m(k_2) \neq 0$ at any point of an open subset $U$ of $M$. On $U$ we get
			\begin{equation}\label{RelationDer4}
				\frac 2 {k_1 - k_3} \left ( \frac {E_1(k_3)} {k_1 - k_3} - \frac {E_1(k_2)} {k_1 - k_2} \right ) E_1(f) + (-3mf + 2(m-1)k_2)f = 0.
			\end{equation}
			Differentiating (\ref{RelationDer4}) by $E_m$ we obtain
			\begin{align*}
				& - \frac {2\left ( E_m(k_1) - E_m(k_3)\right )} {(k_1 - k_3)^2} \left ( \frac {E_1(k_3)} {k_1 - k_3} - \frac {E_1(k_2)} {k_1 - k_2}\right ) E_1(f) +\\
				& + \frac 2 {k_1 - k_3} \left ( -(m-2)\frac {k_1 - 2k_2 + k_3} {(k_3 - k_2) (k_1 - k_3)} \left ( \frac {E_1 (k_3)} {k_1 - k_3} - \frac {E_1 (k_2)} {k_1 - k_2} \right ) E_m(k_2) -\right.\\
				& \left . - \frac 1 {k_3 - k_2} \left ( \frac {E_1(k_3)} {k_1 - k_3} - \frac {E_1 (k_2) } {k_1 - k_2} \right ) E_m(k_2)\right ) E_1 (f) + \\
				& + (-3mE_m(f) + 2(m-1)E_m(k_2))f = 0,
			\end{align*}
			which is equivalent to 
			\begin{align*}
				&  - \frac {2\left ((m-1)k_1 - 3(m-2)k_2 + (2m-5)k_3\right )} {(k_1 - k_3)(k_3 - k_2)} \left ( \frac {E_1(k_2)} {k_1 - k_3} - \frac {E_1(k_2)} {k_1 - k_2}\right ) E_1(f) E_m(k_2) +\\
				& + 2(m-1)f E_m(k_2) = 0.
			\end{align*}
			Using (\ref{CurvaturesDependency}) we obtain
			$$
			(m-1)k_1 - 3(m-2)k_2 + (2m - 5)k_3 = \frac 12 m(5m-14)f - 2(m-2)(m-1)k_2
			$$
			and
			$$
			\frac {k_1 - k_3} 2 = \frac {-2mf + (m-2)k_2} 2.
			$$
			Therefore
			\begin{align}\label{RelationDer5}
				& \frac {\frac 12 m (14 - 5m)f + 2(m-1)(m-2)k_2} {(k_1 - k_3) (k_3 - k_2)} \left ( \frac {E_1(k_3)} {k_1 - k_3} - \frac {E_1(k_2)} {k_1 - k_2} \right ) E_1 (f) +\\
				& + (m-1) \left ( -2mf + (m-2)k_2\right ) f = 0. \notag
			\end{align}
			Using (\ref{CurvaturesDependency}) and (\ref{RelationDer4}), relation (\ref{RelationDer5}) becomes
			$$
			(m-2) \left ( -9mf + 6(m-1)k_2\right ) f = 0,
			$$	
			which implies that 
			$$
			k_2 = \frac {3mf} {2(m-1)}
			$$
			on $U$, which contradicts (\ref{CurvaturesNEQ}).
			
			From \textbf{Case 1} and \textbf{Case 2} we obtain the conclusion.
		\end{proof} 
		
		\begin{remark}
			The distinct principal curvatures $k_1$, $k_2$ and $k_3$ are constant along the leaves of $\mathcal D$.
		\end{remark}
		
		Using the fact that the function $k_2$ is constant along the leaves of $\mathcal D$ we get more information about the connection forms.
		\begin{lemma}\label{LemmaConnectionForms}
			The connection forms $\omega ^i_j$ satisfy
			\begin{itemize}
				\item for any $i \in \overline {2, m}$:
				\begin{equation}
					\omega ^1_i (E_1) = 0, \label{EquationOmega8} 
				\end{equation}
				\item for any $ j \in \overline {2, m}$:
				\begin{equation}
					\omega ^j_1 (E_j) = \frac {E_1(\overline k_j)} {k_1 - \overline k_j}, \label{EquationOmega9}
				\end{equation}
				\item for any $i \in \overline {r+1, m}$, $j \in \overline {2, r}$:
				\begin{equation}
					\omega ^j _i (E_j) = 0, \label{EquationOmega10}
				\end{equation}
				\item for any $i \in \overline {2, r}$ and $j \in \overline {r+1, m}$:
				\begin{equation}
					\omega ^j_i(E_j) = 0, \label{EquationOmega11}
				\end{equation}
			\end{itemize}
			\begin{itemize}
				\item for any $j, \ell \in \overline {2, r}$, $\ell \neq j$:
				\begin{equation}
					\omega^\ell _ 1 (E_j) = 0, \label{EquationOmega12}
				\end{equation}
				\item for any $j, \ell \in \overline {r+1, m}$, $j\neq \ell$:
				\begin{equation}
					\omega ^\ell _1 (E_j) = 0, \label{EquationOmega13}
				\end{equation}
				\item for any $i \in \overline {2, r}$, $j, \ell \in \overline {r+1, m}$, $j\neq \ell$:
				\begin{equation}
					\omega ^\ell _i (E_j) = 0, \label{EquationOmega14}
				\end{equation}
				\item for any $i, \ell \in \overline {2, r}$, $j \in \overline {r+1, m}$, $i\neq \ell$:
				\begin{equation}
					\omega^\ell_j (E_i) = 0, \label{EquationOmega15}
				\end{equation}
				\item for any $i \in \overline {2, r}$, $j \in \overline {r+1, m}$:
				\begin{equation}
					\omega^1_j(E_i) = 0 \label{EquationOmega16}
				\end{equation}
				and
				\begin{equation}
					\omega^1_i(E_j) = 0, \label{EquationOmega17}
				\end{equation}
				\item for any $j \in \overline {2, r}$, $\ell \in \overline {r+1, m}$:
				\begin{equation}
					\omega ^\ell _j (E_1) = 0. \label{EquationOmega18}
				\end{equation}
			\end{itemize}
		\end{lemma}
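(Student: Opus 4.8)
The plan is to derive every identity in the statement from the two Codazzi equations \eqref{EquationCodazzi1} and \eqref{EquationCodazzi2}, together with the skew-symmetry $\omega^i_j=-\omega^j_i$, the already established relation \eqref{EquationOmega1}, and — crucially — the fact recorded in the Remark that every principal curvature is constant along the leaves of $\mathcal D=\Span\{E_2,\ldots,E_m\}$, i.e. $E_i(\overline k_j)=0$ for all $i\in\overline{2,m}$. Throughout, the mechanism is always the same: one chooses the indices in \eqref{EquationCodazzi1} or \eqref{EquationCodazzi2} so that one side is forced to vanish (either because a derivative $E_i(\overline k_j)$ vanishes, or because a curvature difference $\overline k_a-\overline k_b$ vanishes when $a,b$ lie in the same multiplicity block), while the coefficient multiplying the unknown connection form on the other side is non-zero, because the relevant pair of indices lies in different blocks, where $k_1,k_2,k_3$ are pairwise distinct. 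The block decomposition $\{1\}\cup\overline{2,r}\cup\overline{r+1,m}$ attached to $k_1,k_2,k_3$ is exactly what dictates which differences vanish.

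First I would dispose of \eqref{EquationOmega8} and \eqref{EquationOmega9}, which are nothing but \eqref{EquationOmega2} and \eqref{EquationOmega5}: they come from \eqref{EquationCodazzi1} with $j=1$ and with $i=1$ respectively, using $E_i(k_1)=0$ and $\overline k_i\neq k_1$. Next, \eqref{EquationOmega10} and \eqref{EquationOmega11} follow immediately from \eqref{EquationCodazzi1}: for the relevant $i\geq 2$ one has $E_i(\overline k_j)=0$, while the coefficient $\overline k_i-\overline k_j$ equals $\pm(k_3-k_2)\neq 0$. Then \eqref{EquationOmega12}, \eqref{EquationOmega13}, \eqref{EquationOmega14} and \eqref{EquationOmega15} are each a single application of \eqref{EquationCodazzi2}: in every case two of the three indices share a block, so the corresponding side of \eqref{EquationCodazzi2} vanishes, and the surviving coefficient is one of $\pm(k_1-k_2),\pm(k_1-k_3),\pm(k_2-k_3)$, hence non-zero; dividing gives the claim.

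The two coupled identities \eqref{EquationOmega16} and \eqref{EquationOmega17} require the additional input \eqref{EquationOmega1}. I would apply \eqref{EquationCodazzi2} with superscript $1$ and the two subscripts in different blocks to get $(\overline k_j-k_1)\omega^1_j(E_i)=(\overline k_i-k_1)\omega^1_i(E_j)$, then substitute $\omega^1_i(E_j)=\omega^1_j(E_i)$ from \eqref{EquationOmega1} to obtain $(\overline k_j-\overline k_i)\omega^1_j(E_i)=0$; since $\overline k_j=k_3\neq k_2=\overline k_i$ this yields \eqref{EquationOmega16}, and \eqref{EquationOmega17} then follows again from \eqref{EquationOmega1}. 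Finally, \eqref{EquationOmega18} is obtained by bootstrapping off \eqref{EquationOmega16}: applying \eqref{EquationCodazzi2} to the triple $1,j,\ell$ with $j\in\overline{2,r}$, $\ell\in\overline{r+1,m}$ gives $(\overline k_j-\overline k_\ell)\omega^\ell_j(E_1)=(k_1-\overline k_\ell)\omega^\ell_1(E_j)$, and by skew-symmetry $\omega^\ell_1(E_j)=-\omega^1_\ell(E_j)$, which vanishes by \eqref{EquationOmega16}; as $\overline k_j-\overline k_\ell=k_2-k_3\neq 0$, we conclude $\omega^\ell_j(E_1)=0$.

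The only genuinely non-mechanical points, hence where I would be most careful, are \eqref{EquationOmega16}/\eqref{EquationOmega17} and \eqref{EquationOmega18}. For these the naive choice of indices makes both sides of Codazzi degenerate (a vanishing coefficient multiplying the unknown, or a trivial $0=0$), so one cannot simply divide; the resolution is to feed in the symmetry relation \eqref{EquationOmega1} for \eqref{EquationOmega16}/\eqref{EquationOmega17}, and then to use \eqref{EquationOmega16} itself, via skew-symmetry, as an input for \eqref{EquationOmega18}. Everything else is careful but routine index bookkeeping, the only substantive point being to track, for each chosen triple, which index pairs lie in the same block and which do not.
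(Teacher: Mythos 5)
Your proposal is correct and follows essentially the same route as the paper, which simply states that \eqref{EquationOmega8}--\eqref{EquationOmega11} follow from \eqref{EquationCodazzi1} and \eqref{EquationOmega12}--\eqref{EquationOmega18} from \eqref{EquationCodazzi2}; your fleshed-out index bookkeeping, the use of $E_i(\overline k_j)=0$ for $i\geq 2$, and the extra input of \eqref{EquationOmega1} for \eqref{EquationOmega16}--\eqref{EquationOmega17} (which are in fact special cases of \eqref{EquationOmega3}) all match the machinery already set up in the paper.
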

		\begin{proof}
			For formulas (\ref{EquationOmega8})$-$(\ref{EquationOmega11}) we use (\ref{EquationCodazzi1}) and for relations (\ref{EquationOmega12})$-$(\ref{EquationOmega18}) we use (\ref{EquationCodazzi2}).
		\end{proof}
		
		From the previous lemmas we can write the Levi-Civita connection of $M$ as follows.
		\begin{lemma}
			The following relations hold
			
			\begin{itemize}
				\item The integral curves of $E_1$ are geodesic, i.e.
				\begin{equation}\label{RelationConnection1}
					\nabla _{E_1} E_1 = 0.
				\end{equation}
				\item For any $i \in \overline {2,m}$,
				\begin{equation}\label{RelationConnection2}
						\nabla _{E_i} E_1 = \frac {E_1(\overline k _i)} {k_1 - \overline k_i} E_i.
				\end{equation}
				\item For any $j \in \overline {2, r}$ and $i \in \overline {r+1, m}$,
				\begin{equation}\label{RelationConnection3}
					\nabla _{E_i} E_j = \sum _{\substack {\ell = 2 \\ \ell \neq j}} ^r \omega ^\ell _j (E_i) E_\ell.
				\end{equation}
				\item For any $i, j \in \overline {2, r}$, $i\neq j$,
				\begin{equation}\label{RelationConnection4}
					\nabla _{E_i} E_j = \sum _{\substack{\ell = 2 \\ \ell \neq j}} ^r \omega ^\ell _j (E_i) E_\ell.
				\end{equation}
				\item For any $j \in \overline {2, r}$,
				\begin{equation}\label{RelationConnection5}
					\nabla _{E_1} E_j = \sum _{\substack {\ell = 2 \\ \ell \neq j}} ^r \omega ^\ell _j (E_1) E_\ell.
				\end{equation}
				\item For any $i, j \in \overline {r+1, m}$, $i\neq j$,
				\begin{equation}\label{RelationConnection6}
					\nabla _{E_i} E_j = \sum _{\substack{\ell = r+1\\ \ell \neq j}} ^m \omega^\ell _j (E_i)E_\ell.
				\end{equation}
				\item For any $j\in \overline {r+1, m}$, $i\in \overline {2, r}$,
				\begin{equation}\label{RelationConnection7}
					\nabla _{E_i} E_j = \sum _{\ell = r+1} ^m \omega^\ell _j (E_i) E_\ell.
				\end{equation}
				\item For any $j \in \overline {r+1, m}$,
				\begin{equation}\label{RelationConnection8}
					\nabla _{E_1} E_j = \sum _{\substack {\ell = r+1 \\ \ell \neq j}} ^m \omega ^\ell _j (E_1) E_\ell.
				\end{equation}
				\item For any $i \in \overline {2, r}$,
				\begin{equation}\label{RelationConnection9}
					\nabla _{E_i} E_i = - \frac {E_1(k_2)} {k_1 - k_2} E_1 + \sum _{\substack {\ell = 2\\\ell \neq i}} ^r \omega ^\ell _i (E_i) E_\ell.
				\end{equation}
				\item For any $i \in \overline {r+1, m}$,
				\begin{equation}\label{RelationConnection10}
					\nabla _{E_i} E_i = - \frac {E_1(k_3)} {k_1 - k_3} E_1 + \sum _{\substack{\ell = r+1 \\ \ell \neq i}} ^m \omega ^\ell _i(E_i) E_\ell.
				\end{equation}
			\end{itemize}
		\end{lemma}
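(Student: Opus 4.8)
The plan is to prove each of the ten identities in the same mechanical way: expand the covariant derivative in the orthonormal frame via the defining relation $\nabla_X E_j = \sum_{k=1}^m \omega^k_j(X)E_k$, and then discard every term whose connection form is already known to vanish. The only ingredients required are the skew-symmetry $\omega^i_j = -\omega^j_i$ (which in particular forces $\omega^i_i = 0$, since $\omega^i_i = -\omega^i_i$), the vanishing relations of Lemma \ref{LemmaConnectionForms}, and relation \eqref{EquationOmega3} from the earlier lemma. For the few coefficients that survive and are not identically zero, I would insert the explicit value given by \eqref{EquationOmega9}.

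I would organize the verification according to the type of the derivative. For \eqref{RelationConnection1} and \eqref{RelationConnection2} one expands $\nabla_{E_1}E_1$ and $\nabla_{E_i}E_1$: the $E_1$-component is $\omega^1_1(\cdot)=0$, the remaining off-diagonal components vanish by \eqref{EquationOmega8}, \eqref{EquationOmega12}, \eqref{EquationOmega13}, \eqref{EquationOmega16}, \eqref{EquationOmega17} (the last two used together with skew-symmetry), and the diagonal coefficient $\omega^i_1(E_i)$ is read off directly from \eqref{EquationOmega9}. For the off-diagonal derivatives \eqref{RelationConnection3}--\eqref{RelationConnection8}, where $E_i$ and $E_j$ are distinct frame vectors, one expands, drops the $E_1$-component by \eqref{EquationOmega8}, \eqref{EquationOmega3} or \eqref{EquationOmega16}, and drops every component lying in the curvature block not containing $E_j$ by means of \eqref{EquationOmega10}, \eqref{EquationOmega11}, \eqref{EquationOmega14}, \eqref{EquationOmega15}, \eqref{EquationOmega18}; what remains is precisely the stated sum over the block of $E_j$. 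Finally, for the diagonal derivatives \eqref{RelationConnection9} and \eqref{RelationConnection10}, the same elimination leaves the within-block sum together with the single extra term $\omega^1_i(E_i)E_1$; writing $\omega^1_i(E_i) = -\omega^i_1(E_i)$ and substituting \eqref{EquationOmega9} with $\overline k_i = k_2$ (respectively $\overline k_i = k_3$) yields the coefficient $-E_1(k_2)/(k_1-k_2)$ (respectively $-E_1(k_3)/(k_1-k_3)$).

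The whole argument is routine, so there is no genuinely hard analytic step; the only real care is in the index bookkeeping. Each vanishing statement in Lemma \ref{LemmaConnectionForms} is recorded with a fixed placement of its two index blocks and of its evaluation vector, whereas in the frame expansion the same connection form usually occurs with these roles permuted. The point to watch is therefore to rewrite every term, via $\omega^i_j = -\omega^j_i$, into the exact shape of the relevant identity before invoking it. For example, a component such as $\omega^i_j(E_i)$ with $i$ in the second block and $j$ in the third is annihilated by \eqref{EquationOmega10} only after one recognizes it, up to sign and relabeling, as an instance of that relation; similarly the block-changing terms in \eqref{RelationConnection3} split into the case $\ell\neq i$, governed by \eqref{EquationOmega14}, and the case $\ell=i$, governed by \eqref{EquationOmega11}. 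Once this matching is set up carefully, all ten formulas follow immediately.
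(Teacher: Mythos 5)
Your proof is correct and is exactly the argument the paper intends: the paper's own proof of this lemma is the single sentence that it follows ``directly using Lemma \ref{LemmaConnectionForms},'' and your expansion of each $\nabla_{E_i}E_j$ in the frame, followed by elimination of terms via the vanishing relations \eqref{EquationOmega8}--\eqref{EquationOmega18}, \eqref{EquationOmega3} and the skew-symmetry $\omega^i_j=-\omega^j_i$ (with \eqref{EquationOmega9} supplying the surviving $E_1$-coefficients in \eqref{RelationConnection9} and \eqref{RelationConnection10}), is precisely that direct verification, with the index bookkeeping handled correctly.
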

		\begin{proof}
			 This lemma is proved directly using Lemma \ref{LemmaConnectionForms}.
		\end{proof}
	
		We set 
		\begin{equation}\label{DefinitionOmegaTheta}
			\Omega = \frac {E_1 (k_2)} {k_1 - k_2}\quad \text{and} \quad \Theta = \frac {E_1(k_3)} {k_1 - k_3}.
		\end{equation}
		We will use the Gauss equation and the tangent part of the biharmonic equation to infer some relation that $\Omega$ and $\Theta$ must satisfy.
		\begin{lemma}
			The following relations hold:
			\begin{equation}\label{EquationGauss1}
				E_1(\Omega) + \Omega^2 = -c - k_1 k_2.
			\end{equation}
			\begin{equation}\label{EquationGauss2}
				E_1(\Theta ) + \Theta^2 = -c - k_1 k_3.
			\end{equation}
			\begin{equation}\label{EquationGauss3}
				\Omega\Theta = -c - k_2 k_3.
			\end{equation}
			\begin{equation}\label{EquationGauss4}
				E_1(E_1(k_2)) + 2\Omega E_1(k_2) - \Omega E_1(k_1) + (k_1k_2 + c) (k_1 - k_2) = 0.
			\end{equation}
			\begin{equation}\label{EquationGauss5}
				E_1(E_1(k_3)) + 2 \Theta E_1(k_3) - \Theta E_1(k_1) + (k_1 k_3 + c) (k_1 - k_3) = 0.	
			\end{equation}
		\end{lemma}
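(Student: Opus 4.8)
The plan is to obtain the first three relations \eqref{EquationGauss1}, \eqref{EquationGauss2}, \eqref{EquationGauss3} by comparing, for three well-chosen pairs of frame vectors, the extrinsic value of a curvature component given by the Gauss equation \eqref{GaussEquation} with its intrinsic value computed from the Levi-Civita connection through the formulas \eqref{RelationConnection1}--\eqref{RelationConnection10}. The last two relations \eqref{EquationGauss4} and \eqref{EquationGauss5} will then follow from \eqref{EquationGauss1} and \eqref{EquationGauss2} by a purely algebraic differentiation, with no further geometry.

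First I would fix $i \in \overline{2, r}$, so that $A(E_i) = k_2 E_i$, and evaluate $\langle R(E_1, E_i)E_1, E_i\rangle$. Feeding $A(E_1) = k_1 E_1$ and $A(E_i) = k_2 E_i$ into \eqref{GaussEquation} gives at once the value $-c - k_1 k_2$. On the intrinsic side I expand $R(E_1, E_i)E_1 = \nabla_{E_1}\nabla_{E_i}E_1 - \nabla_{E_i}\nabla_{E_1}E_1 - \nabla_{[E_1,E_i]}E_1$, using $\nabla_{E_1}E_1 = 0$ from \eqref{RelationConnection1}, the identity $\nabla_{E_i}E_1 = \Omega E_i$ that comes from \eqref{RelationConnection2} and the definition \eqref{DefinitionOmegaTheta}, and $\nabla_{E_1}E_i$ from \eqref{RelationConnection5}. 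Projecting onto $E_i$ makes every term orthogonal to $E_i$ drop out, and what survives is precisely $E_1(\Omega) + \Omega^2$. Equating the two expressions yields \eqref{EquationGauss1}; the same computation with $j \in \overline{r+1, m}$, using \eqref{RelationConnection8}, \eqref{RelationConnection10} and $\Theta$ in place of $\Omega$, yields \eqref{EquationGauss2}.

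For \eqref{EquationGauss3} I would take $i \in \overline{2, r}$ and $j \in \overline{r+1, m}$ and compute $\langle R(E_i, E_j)E_j, E_i\rangle$, whose Gauss value is $c + k_2 k_3$. This is the delicate point and the main obstacle: expanding the three terms of $R(E_i, E_j)E_j$ draws on essentially the whole list \eqref{RelationConnection3}--\eqref{RelationConnection10}, and one must verify that all mixed-block contributions are orthogonal to $E_i$, so that the only surviving piece is the one produced by differentiating the $-\Theta E_1$ term of $\nabla_{E_j}E_j$ in \eqref{RelationConnection10}, namely $-\Theta\,\nabla_{E_i}E_1 = -\Omega\Theta\,E_i$. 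The cancellation of the remaining terms is not automatic, since many of the connection forms are non-zero; it is forced by the vanishing statements collected in Lemma \ref{LemmaConnectionForms}. Once the intrinsic value is seen to be $-\Omega\Theta$, comparison with $c + k_2 k_3$ gives $\Omega\Theta = -c - k_2 k_3$, which is \eqref{EquationGauss3}.

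Finally, \eqref{EquationGauss4} and \eqref{EquationGauss5} require only algebra. Multiplying \eqref{EquationGauss1} by $(k_1 - k_2)$ and using $\Omega = E_1(k_2)/(k_1 - k_2)$ from \eqref{DefinitionOmegaTheta} together with the quotient rule
\[
E_1(\Omega) = \frac{E_1(E_1(k_2))}{k_1 - k_2} - \frac{E_1(k_2)\bigl(E_1(k_1) - E_1(k_2)\bigr)}{(k_1 - k_2)^2}
\]
turns the left-hand side into $E_1(E_1(k_2)) + 2\Omega E_1(k_2) - \Omega E_1(k_1)$ and the right-hand side into $-(c + k_1 k_2)(k_1 - k_2)$, which is exactly \eqref{EquationGauss4}; relation \eqref{EquationGauss5} is obtained verbatim from \eqref{EquationGauss2} with $k_3$ and $\Theta$ in place of $k_2$ and $\Omega$.
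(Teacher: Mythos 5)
Your proposal is correct and follows essentially the same route as the paper: the first three identities come from comparing the Gauss-equation value of a curvature component with its intrinsic expansion via the connection formulas (the paper uses the specific representatives $E_2$, $E_m$ and the component $\langle R(E_m,E_2)E_m,E_2\rangle$ for \eqref{EquationGauss3}, which agrees with your $\langle R(E_i,E_j)E_j,E_i\rangle$ up to the standard curvature symmetries), and \eqref{EquationGauss4}, \eqref{EquationGauss5} follow by the same quotient-rule substitution you describe. The cancellation of the mixed-block terms that you flag as the delicate point is exactly what the paper verifies term by term using Lemma \ref{LemmaConnectionForms}, so no gap remains.
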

		\begin{proof}
			The first three relations are obtained from \eqref{GaussEquation}.
			\begin{itemize}
				\item For $X = E_1$, $Y = E_2$ and $Z = E_1$ we obtain
				\begin{align*}
					R(E_1 , E_2) E_1 =& \nabla _{E_1} \nabla _{E_2} E_1 - \nabla _{E_2} \nabla _{E_1} E_1 - \nabla _{[E_1, E_2]} E_1\\
					=& \nabla _{E_1} \left (\Omega \cdot E_2\right ) - \nabla _{\left (\nabla _{E_1} E_2 - \nabla _{E_2} E_1\right )} E_1\\
					=& E_1(\Omega) E_2 + \Omega \cdot \sum _{\ell = 3} ^r \omega ^\ell _2 (E_1) E_\ell - \nabla _{\left (\sum \limits _{\ell = 3} ^r \left (\omega ^\ell _2 (E_1) E_\ell\right ) - \Omega \cdot E_2 \right )} E_1\\
					=& E_1(\Omega) E_2 + \Omega \cdot \sum _{\ell = 3} ^r \omega ^\ell _2(E_1) E_\ell - \sum _{\ell = 3} ^r \omega ^\ell _2 (E_1) \nabla _{E_2} E_1 + \Omega \cdot \nabla _{E_2} E_1\\
					=& \left ( E_1(\Omega) + \Omega^2\right ) E_2 + \Omega \cdot \sum _{\ell = 3} ^r \omega ^\ell _2 (E_1) E_\ell - \sum _{\ell = 3} ^r \Omega \cdot \omega ^\ell _2 (E_1) E_\ell \\
					=& \left ( E_1(\Omega) + \Omega^2\right ) E_2.
				\end{align*}
				On the other hand, from \eqref{GaussEquation}
				\begin{align*}
					R(E_1, E_2) E_1 =& c\left (\left \langle E_2, E_1 \right \rangle E_1 - \left \langle E_1, E_1 \right \rangle E_2 \right )+\left \langle A(E_2), E_1\right \rangle A(E_1) -\\
									 & - \left \langle A(E_1), E_1\right \rangle A(E_2)\\
									=& (-c - k_1k_2) E_2.
				\end{align*}
				Therefore, we get (\ref{EquationGauss1}).
				\item For $X = E_1$, $Y = E_m$ and $Z = E_1$ we have
				\begin{align*}
					R( E_1, E_m) E_1 =& \nabla _{E_1} \nabla _{E_m} E_1 - \nabla _{E_m} \nabla _{E_1} E_1 - \nabla _{[E_1, E_m]} E_1 \\
					=& \nabla _{E_1} \left (\Theta \cdot E_m\right ) - \nabla _{\left (\nabla _{E_1} E_m - \nabla _{E_m} E_1\right )} E_1\\
					=& E_1(\Theta) E_m + \Theta \cdot \nabla _{E_1} E_m - \nabla _{\left (\sum \limits _{\ell = r+1} ^{m-1} \left ( \omega ^\ell _m (E_1) E_\ell\right ) - \Theta \cdot E_m\right )} E_1\\
					=& E_1(\Theta) E_m + \Theta \cdot \sum _{\ell = r+1} ^{m-1} \omega ^\ell _m (E_1) E_\ell - \sum _{\ell = r+1} ^{m-1} \omega ^\ell _m (E_1) \nabla _{E_ \ell} E_1 + \Theta \cdot \nabla _{E_m} E_1\\
					=&\left ( E_1(\Theta) + \Theta ^2\right ) E_m.
				\end{align*}
				On the other hand, from Gauss equation \eqref{GaussEquation}
				\begin{align*}
					R(E_1, E_m) E_1 =& (-c - k_1k_3) E_m.
				\end{align*}
				Therefore, we obtain (\ref{EquationGauss2}).
				\item For $X = E_m$, $Y = E_2$ and $Z = E_m$ we have
				\begin{align*}
					R(E_m, E_2) E_m =& \nabla _{E_m} \nabla _{E_2} E_m - \nabla _{E_2} \nabla _{E_m} E_m - \nabla _{[E_m, E_2]} E_m\\
					=& \nabla _{E_m} \left ( \sum _{\ell = r+1} ^{m-1} \omega ^\ell _m (E_2) E_\ell\right ) - \nabla _{E_2} \left ( -\Theta \cdot E_1 + \sum _{\ell = r+1} ^{m-1} \omega ^\ell _m (E_m) E_\ell\right ) - \\
					& -\nabla _{\left (\nabla _{E_m} E_2 - \nabla _{E_2} E_m \right )} E_m\\
					=& \sum _{\ell = r+1} ^{m-1} \left ( E_m\left (\omega ^\ell _m(E_2) \right ) E_\ell + \omega ^\ell _m(E_2) \nabla _{E_m} E_\ell\right )+ E_2(\Theta)E_1 + \\
					& + \Theta \cdot \nabla _{E_2} E_1 - \sum _{\ell = r+1} ^{m-1} \left ( E_2 \left ( \omega ^\ell _m(E_m) \right ) E_\ell + \omega ^\ell _m (E_m) \nabla _{E_2} E_\ell\right ) -\\
					& - \nabla _{\left (\sum \limits _{\ell = 3} ^r \omega ^\ell _2(E_m) E_\ell - \sum \limits _{\ell = r+1} ^{m-1} \omega ^\ell _m (E_2) E_\ell \right )} E_m \\
					=& E_2(\Theta)E_1 + \Omega \cdot \Theta \cdot E_2 + \sum _{\ell = r+1} ^{m-1} \left ( E_m\left (\omega ^\ell _m(E_2)\right ) E_\ell + \omega ^\ell _m(E_2) \nabla _{E_m} E_\ell \right )-\\
					& - \sum _{\ell = r+1} ^{m-1} \left ( E_2 \left (\omega ^\ell _m (E_m) \right ) E_\ell + \omega ^\ell _m(E_m) \nabla _{E_2} E_\ell \right ) -\\
					& - \sum _{\ell = 3} ^r \omega ^\ell _2 (E_m) \nabla _{E_\ell} E_m - \sum _{\ell = r+1} ^{m-1} \omega ^\ell _m (E_2) \nabla _{E_\ell} E_m.
				\end{align*}
				Thus,
				$$
				\left \langle R(E_m, E_2) E_m , E_2\right \rangle = \Omega \cdot \Theta.
				$$
				On the other hand, form \eqref{GaussEquation}
				\begin{align*}
					R(E_m, E_2) E_m = (-c - k_2k_3) E_2
				\end{align*}
				and 
				$$
					\left \langle R(E_m, E_2) E_m, E_2 \right \rangle = -c - k_2k_3.
				$$
				Therefore, (\ref{EquationGauss3}) is true.
			\end{itemize}
			We have
			\begin{align*}
				E_1(\Omega) =& E_1 \left ( \frac {E_1(k_2)} {k_1 - k_2} \right ) = \frac {E_1(E_1(k_2))} {k_1 - k_2} - \frac {E_1(k_2)} {k_1 - k_2} \frac {E_1(k_1) - E_1(k_2)} {k_1 - k_2}\\
				=& \frac {E_1(E_1(k_2))} {k_1 - k_2} - \Omega \cdot \frac {E_1(k_1) - E_1 (k_2)} {k_1 - k_2}.
			\end{align*}
			The formula (\ref{EquationGauss1}) becomes (\ref{EquationGauss4}).
			
			Similarly, formula (\ref{EquationGauss2}) becomes (\ref{EquationGauss5}).
		\end{proof}
	
		Next, we will use the normal part of the biharmonic equation to get another relation concerning $\Omega$ and $\Theta$.
		\begin{lemma}\label{LemmaNormalPart}
			The following relation holds
			\begin{equation}\label{EquationNormalPart}
				-E_1(E_1(f)) - \left ( (r-1) \Omega + (m-r)\Theta \right ) E_1(f) + \left (k_1 ^2 + (r-1) k_2 ^2 + (m-r) k_3^2\right ) f = mcf.
			\end{equation}
		\end{lemma}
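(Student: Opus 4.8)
The plan is to specialize the normal part \eqref{BiharmonicEquationHypersurfaces}(i) of the biharmonic equation, namely $\Delta f + \left(|A|^2 - mc\right)f = 0$, to our adapted frame and simply read off \eqref{EquationNormalPart}. Concretely, I would compute $\Delta f$ and $|A|^2$ separately, using the expressions for the Levi-Civita connection obtained in the previous lemma together with the fundamental fact that $E_\alpha(f) = 0$ for every $\alpha \in \overline{2,m}$.

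For the Laplacian I would start from
$$
\Delta f = -\sum_{i=1}^m \left(E_i(E_i(f)) - (\nabla_{E_i}E_i)f\right).
$$
Since $E_\alpha(f) = 0$ identically for $\alpha \geq 2$, every pure second-order term $E_\alpha(E_\alpha(f))$ vanishes, so only $-E_1(E_1(f))$ survives from the second derivatives (recall $\nabla_{E_1}E_1 = 0$ by \eqref{RelationConnection1}). It then remains to evaluate the first-order contributions $(\nabla_{E_i}E_i)f$. Here I would use \eqref{RelationConnection9} for $i \in \overline{2,r}$ and \eqref{RelationConnection10} for $i \in \overline{r+1,m}$; in both cases the off-diagonal sums $\sum_\ell \omega^\ell_i(E_i)E_\ell$ annihilate $f$ because $E_\ell(f) = 0$ for $\ell \geq 2$, leaving only the $E_1$-component. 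Summing over the two groups of indices, with multiplicities $r-1$ and $m-r$ and using the notation \eqref{DefinitionOmegaTheta}, yields
$$
\Delta f = -E_1(E_1(f)) - \left((r-1)\Omega + (m-r)\Theta\right)E_1(f).
$$

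For the squared norm of the shape operator I would use the eigenvalue structure directly: $A$ has eigenvalue $k_1$ with multiplicity $1$, $k_2$ with multiplicity $r-1$ and $k_3$ with multiplicity $m-r$, so $|A|^2 = k_1^2 + (r-1)k_2^2 + (m-r)k_3^2$. Substituting both expressions into $\Delta f + \left(|A|^2 - mc\right)f = 0$ and transposing the $mcf$ term gives exactly \eqref{EquationNormalPart}. There is no genuine obstacle in this argument — it is a direct substitution — and the only point requiring care is the bookkeeping of which connection coefficients survive after differentiating $f$: it is precisely the vanishing of $E_\ell(f)$ for $\ell \geq 2$ that collapses the full connection into its diagonal $E_1$-parts and produces the clean coefficients $(r-1)\Omega + (m-r)\Theta$.
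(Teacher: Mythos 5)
Your proposal is correct and follows exactly the same route as the paper: compute $|A|^2$ from the eigenvalue multiplicities, compute $\Delta f$ using $E_\alpha(f)=0$ for $\alpha\geq 2$ together with \eqref{RelationConnection1}, \eqref{RelationConnection9} and \eqref{RelationConnection10} so that only the $E_1$-components of $\nabla_{E_i}E_i$ survive, and substitute into \eqref{BiharmonicEquationHypersurfaces}(i). The sign bookkeeping and the resulting coefficient $(r-1)\Omega+(m-r)\Theta$ match the paper's computation.
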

		\begin{proof}
			Since $M$ is a biharmonic submanifold, we have \eqref{BiharmonicEquationHypersurfaces}(i).
			
			Now, we will compute $|A|^2$ and $\Delta f$. We have
			\begin{align*}
				|A|^2 =& k_1 ^2 + (r-1)k_2^2 + (m-r)k_3^2,\\
				\Delta f =& - \left ( E_1(E_1(f)) - \left ( \nabla _{E_1} E_1\right ) f\right ) - \sum _{i = 2} ^r \left ( E_i(E_i(f)) - \left ( \nabla _{E_i} E_i \right ) f \right ) -\\
				& - \sum _{i = r+1} ^m \left ( E_i(E_i(f)) - \left ( \nabla _{E_i} E_i\right ) f \right )\\
				=& - E_1(E_1(f)) - \left ( (r-1) \Omega + (m-r) \Theta \right ) E_1(f).
			\end{align*}
			Substituting in the first relation of this proof we obtain (\ref{EquationNormalPart}).
		\end{proof}
	
		\begin{remark}
			We know that the normal part of the biharmonic equation is used in the proof of Lemma \ref{LemmaLeavesD} when $m-r=1$ and in obtaining relation \eqref{EquationNormalPart}.
		\end{remark}
	
		It is easy to see that from (\ref{CurvaturesDependency}) follows
		\begin{align}
			&E_1(k_2) = \frac {3m} {2(r-1)} E_1(f) - \frac {m-r} {r-1} \Theta \cdot (k_1 - k_3),\label{RelationDer6}\\
			&E_1(k_3) = \frac {3m} {2(m-r)} E_1(f) - \frac {r-1} {m-r} \Omega \cdot (k_1 - k_2).\label{RelationDer7}
		\end{align}
		Relation (\ref{RelationDer7}) is equivalent to
		\begin{align*}
			\Theta \cdot (k_1 - k_3) = \frac {3m} {2(m-r)} E_1(f) - \frac {r-1} {m-r} \Omega \cdot (k_1 - k_2),
		\end{align*}
		thus 
		\begin{equation}\label{RelationDer8}
			E_1(f) = \left ( - \frac {r-1} 3 f - \frac {2(r-1)} {3m} k_2 \right ) \Omega + \left ( - \frac {m-r+3} 3 f + \frac {2(r-1)} {3m} k_2 \right ) \Theta.
		\end{equation}
	
		As a consequence of \eqref{EquationGauss4}, \eqref{EquationGauss5} and \eqref{EquationNormalPart} we have
		\begin{lemma}
			The following formula holds
			\begin{align}
				& ((4-r)\Omega + (r-m+3) \Theta ) E_1(f) + \frac {3m^2(m-r+6)} {4(m-r)} f^3 - \label{Relation1}\\
				& - \frac {3m(m+4r -2)} {2(m-r)} f^2 k_2 + \frac {3m(r-1)} {m-r} fk_2^2 - 3(m+1) cf = 0\notag 
			\end{align}
		\end{lemma}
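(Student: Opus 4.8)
The plan is to eliminate every second-order derivative along $E_1$ from \eqref{EquationGauss4}, \eqref{EquationGauss5} and \eqref{EquationNormalPart}, so that only a first-order relation survives. The starting point is the trace identity $mf = k_1 + (r-1)k_2 + (m-r)k_3$. Since $k_1 = -\tfrac m2 f$, differentiating it once along $E_1$ gives $\tfrac{3m}{2}E_1(f) = (r-1)E_1(k_2) + (m-r)E_1(k_3)$, and differentiating once more yields
\begin{equation*}
\tfrac{3m}{2}E_1(E_1(f)) = (r-1)E_1(E_1(k_2)) + (m-r)E_1(E_1(k_3)).
\end{equation*}
This identity is what allows the three a priori independent second derivatives $E_1(E_1(f))$, $E_1(E_1(k_2))$ and $E_1(E_1(k_3))$ to be reduced to a single one.

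Next I would form the linear combination $(r-1)$ times \eqref{EquationGauss4} plus $(m-r)$ times \eqref{EquationGauss5}. Replacing $(r-1)E_1(E_1(k_2)) + (m-r)E_1(E_1(k_3))$ by $\tfrac{3m}{2}E_1(E_1(f))$ through the identity above, and using $E_1(k_1) = -\tfrac m2 E_1(f)$ on the terms carrying $E_1(k_1)$, this combination takes the form
\begin{equation*}
\tfrac{3m}{2}E_1(E_1(f)) + 2(r-1)\Omega E_1(k_2) + 2(m-r)\Theta E_1(k_3) + \tfrac m2\big((r-1)\Omega + (m-r)\Theta\big)E_1(f) + \mathcal P = 0,
\end{equation*}
where $\mathcal P = (r-1)(k_1k_2+c)(k_1-k_2) + (m-r)(k_1k_3+c)(k_1-k_3)$. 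I would then solve \eqref{EquationNormalPart} for $E_1(E_1(f))$ and substitute, which removes the last second derivative; the two contributions proportional to $\big((r-1)\Omega+(m-r)\Theta\big)E_1(f)$ merge into $-m\big((r-1)\Omega+(m-r)\Theta\big)E_1(f)$.

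At this point only first-order data remain. To bring the derivative terms into the form required by \eqref{Relation1}, I would rewrite $E_1(k_2)$ and $E_1(k_3)$ by means of \eqref{RelationDer6} and \eqref{RelationDer7}; this turns $2(r-1)\Omega E_1(k_2) + 2(m-r)\Theta E_1(k_3)$ into $3m(\Omega+\Theta)E_1(f)$ plus a single term proportional to the product $\Omega\Theta$. Collecting all $E_1(f)$ contributions produces the coefficient $m\big((4-r)\Omega+(r-m+3)\Theta\big)$, which is exactly $m$ times the bracket in \eqref{Relation1}. The residual $\Omega\Theta$ term is disposed of with \eqref{EquationGauss3}, which reads $\Omega\Theta = -c - k_2k_3$ and is therefore polynomial in the curvatures; after substituting $k_1 = -\tfrac m2 f$ and \eqref{CurvaturesDependency} for $k_3$, and dividing the whole relation by $m$, one reaches \eqref{Relation1}.

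The only real obstacle is the polynomial bookkeeping at the end. No $\Omega^2$ or $\Theta^2$ ever appears, because \eqref{RelationDer6}--\eqref{RelationDer7} express the quadratic products $\Omega E_1(k_2)$ and $\Theta E_1(k_3)$ linearly in $E_1(f)$ and in $\Omega\Theta$, so the reduction is structurally forced. What remains to check is that, after all substitutions and the division by $m$, the coefficients of $f^3$, $f^2k_2$, $fk_2^2$ and $cf$ collapse precisely to $\tfrac{3m^2(m-r+6)}{4(m-r)}$, $-\tfrac{3m(m+4r-2)}{2(m-r)}$, $\tfrac{3m(r-1)}{m-r}$ and $-3(m+1)$. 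One convenient sanity check is the identity $(m-r)(k_1-k_3)+(r-1)(k_1-k_2) = m(k_1-f)$, which makes the $\Omega\Theta$ contribution transparent and pins down the $cf$ coefficient.
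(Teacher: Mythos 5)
Your proposal is correct and takes essentially the same route as the paper: the paper also eliminates the second derivatives by combining \eqref{EquationGauss4}, \eqref{EquationGauss5} and \eqref{EquationNormalPart} (through \eqref{CurvaturesDependency}, which is exactly your twice-differentiated trace identity), and then removes the $\Omega E_1(k_2)$, $\Theta E_1(k_3)$ and $\Omega\Theta$ terms via \eqref{RelationDer6}, \eqref{RelationDer7} and \eqref{EquationGauss3}. Your symmetric combination $(r-1)\cdot\eqref{EquationGauss4}+(m-r)\cdot\eqref{EquationGauss5}$ is, up to an overall factor of $m-r$, the very elimination the paper performs by substituting into \eqref{EquationGauss5}, and the final coefficients (e.g. $-3(m+1)cf$) do come out as claimed.
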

		\begin{proof}
			Using (\ref{CurvaturesDependency}), (\ref{EquationGauss4}), (\ref{EquationNormalPart}), (\ref{RelationDer6}), (\ref{RelationDer7}) relation (\ref{EquationGauss5}) yields
			\begin{align*}
				&\frac {3m} {2(m-r)} E_1(E_1(f)) - \frac {r-1} {m-r} E_1(E_1(k_2)) + \\
				& + 2 \Theta \cdot \left ( \frac {3m} {2(m-r)} E_1(f) - \frac {r-1} {m-r} E_1(k_2)\right ) + \frac m 2 \Theta \cdot E_1(f) +\\
				& + \left ( -\frac m2 f \left ( \frac {3m} {2(m-r)} f - \frac {r-1} {m-r} k_2\right ) + c \right ) \left ( -\frac m2 f - \frac {3m} {2(m-r)} f + \frac {r-1} {m-r} k_2\right ) = 0 \\ 
				\Leftrightarrow & \frac {3m} {2(m-r)} \left ( - ((r-1)\Omega + (m-r)\Theta) E_1(f) + (k_1 ^2 + (r-1) k_2 ^2 +(m-r)k_3^2 -mc) f \right ) +\\
				& + \frac {r-1} {m-r} \left ( 2 \Omega \cdot E_1(k_2) - \Omega \cdot E_1 (k_1) + (k_1k_2 + c) (k_1 - k_2) \right ) + \\
				& + \frac {3m} {m-r} \Theta \cdot E_1(f) - \frac {2(r-1)} {m-r} \Omega \cdot \Theta \cdot (k_1 - k_2) + \frac m2 \Theta \cdot E_1(f)+\\
				& + \left ( - \frac {3m^2} {4(m-r)} f^2 + \frac {m(r-1)} {2(m-r)} fk_2 + c \right ) \left ( -\frac {m(m-r+3)} {2(m-r)} f + \frac {r-1} {m-r} k_2 \right ) = 0\\ 
				\Leftrightarrow & \left ( - \frac {3m(r-1)} {2(m-r)} \Omega - \frac {3m(m-r)} {2(m-r)} \Theta \right ) E_1(f) + \\
				& + \frac {3m} {2(m-r)} \left ( \frac {m^2(m-r+9)} {4(m-r)} f^2 - \frac {3m(r-1)} {m-r} fk_2 + \frac {(r-1) (m-1)} {m-r} k_2^2 -mc \right ) f +\\
				& + \frac {r-1} {m-r} \left ( \frac {3m} {r-1} \Omega \cdot E_1(f) - \frac {2(m-r)} {r-1} \Omega \cdot \Theta \cdot (k_1 - k_3) + \frac m2 \Omega \cdot E_1(f)\right ) +\\
				& + \frac {r-1} {m-r} \left (- \frac m2 fk_2 + c\right ) \left ( - \frac m2 f - k_2 \right ) + \frac {3m} {m-r} \Theta \cdot E_1(f)+\\
				& + \frac {2(r-1)} {m-r} \left ( c + \frac {3m} {2(m-r)} fk_2 - \frac {r-1} {m-r} k_2^2\right )\left ( - \frac m2 f - k_2\right ) + \frac m2 \Theta \cdot E_1(f) +\\
				& + \frac {3m^3(m-r+3)} {8(m-r)^2} f^3 - \frac {3m^2(r-1)} {4(m-r)^2} f^2k_2 - \frac {m^2(r-1) (m-r+3)} {4(m-r)^2} f^2k_2 +\\
				& + \frac {m(r-1)^2} {2(m-r)^2} fk_2^2 - \frac {m(m-r+3)} {2(m-r)} cf + \frac {r-1} {m-r} ck_2 = 0 \\
				\Leftrightarrow & \frac m {m-r} ((4-r) \Omega + (r-m+3) \Theta ) E_1(f) + \frac {3m^3(m-r+9)} {8(m-r)^2} f^3 - \\
				& - \frac {9m^2(r-1)} {2(m-r)^2} f^2k_2 + \frac {3m(m-1)(r-1)} {2(m-r)^2} fk_2^2 - \frac {3m^2} {2(m-r)} cf - \\
				& - 2\Omega \cdot \Theta \cdot \left ( - \frac {m(m-r+3)} {2(m-r)} f + \frac {r-1} {m-r} k_2 \right ) + \frac {m^2(r-1)} {4(m-r)} f^2k_2 + \\
				& + \frac {m(r-1)} {2(m-r)} fk_2^2 - \frac {m(r-1)} {2(m-r)} cf - \frac {r-1} {m-r} ck_2 - \frac {m(r-1)} {m-r} cf - \\
				& - \frac {2(r-1)} {m-r} ck_2 - \frac {3m^2(r-1)} {2(m-r)^2} f^2k_2 + \frac {m(r-1)(r-4)} {(m-r)^2} fk_2^2 + \frac {2(r-1)^2} {(m-r)^2} k_2^3 + \\
				& + \frac {3m^3 (m-r+3)} {8(m-r)^2} f^3 - \frac {m^2(r-1)(m-r+6)} {4(m-r)^2} f^2k_2 + \frac {m(r-1)^2} {2(m-r)^2} fk_2^2 -\\
				& - \frac {m(m-r+3)} {2(m-r)} cf + \frac {r-1} {m-r} ck_2 = 0.
			\end{align*}
			Using (\ref{EquationGauss3}) we get \eqref{Relation1}.
		\end{proof}
	
		We note that \eqref{Relation1} vanishes identically when $m=7$, $r=4$ and $\delta = 0$, where 
		$$\delta = 28 k_2^2 - 98 f k_2 + 147 f^2 - 32 c.$$
		In this special case, \eqref{EquationNormalPart} is just a consequence of \eqref{EquationGauss4} and \eqref{EquationGauss5} and therefore we do not need to use the normal part of the biharmonic equation in Lemma \ref{LemmaNormalPart}. Also, the relations derived from \eqref{Relation1} cannot provide new information. This special case will appear naturally in our analysis at the end of the proof.
		
		We recall here that a submanifold for which the tangent part of the biharmonic equation vanishes is called biconservative. Since $\delta = 0$ is equivalent to $14 |A|^2 - 245 f^2 - 96 c = 0$, we can state
		
		\begin{lemma}\label{LemmaBicons}
			Let $M^7$ be a biconservative hypersurface in $N^8(c)$. Assume that $M$ has three distinct principal curvatures of multiplicities $m_1 = 1$, $m_2 = 3$, $m_3 = 3$ and $\grad f \neq 0$ at any point of $M$. Then the following relation 
			$$14 |A|^2 - 245 f^2 - 96 c = 0$$
			cannot hold on any open subset of $M$.
		\end{lemma}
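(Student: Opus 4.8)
The plan is to run the whole machinery of the preceding lemmas, which is available for a \emph{biconservative} hypersurface (the tangent part of the biharmonic equation is exactly the biconservative condition, and since here $m-r=3\geq 2$ we are in Case~1 of Lemma~\ref{LemmaLeavesD}, which uses only Codazzi together with $k_1=-\tfrac{m}{2}f$), and then specialize to $m=7$, $r=4$. On the open set $U$ where $\delta=0$ I have $k_1=-\tfrac72 f$ and, by \eqref{CurvaturesDependency}, $k_3=\tfrac72 f-k_2$; moreover \eqref{CurvaturesNEQ} gives $k_2\notin\{-\tfrac72 f,\ \tfrac74 f,\ 7f\}$, so none of the denominators occurring below degenerate. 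Since $f,k_2$ are constant along the leaves of $\mathcal D$, I regard them as functions of arclength along the (geodesic) integral curves of $E_1$ and write $'=E_1(\cdot)$, with $f'=|\grad f|\neq 0$ on $U$. The point is that in this case \eqref{Relation1} reduces to $\tfrac34 f\,\delta=0$, hence carries no information, and the normal part is unavailable for a merely biconservative $M$; so the contradiction must come from the Gauss relations \eqref{EquationGauss1}--\eqref{EquationGauss3} alone, together with the constraint $\delta=0$.

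First I would differentiate $\delta=0$ along $E_1$ to get $k_2'=\kappa\,f'$ with $\kappa=\tfrac{7(k_2-3f)}{4k_2-7f}$, and substitute into \eqref{DefinitionOmegaTheta} to obtain $\Omega=a\,f'$ and $\Theta=b\,f'$ for explicit rational functions $a,b$ of $(f,k_2)$. The key simplification is that, modulo $\delta=0$, equation \eqref{EquationGauss3} collapses to $\Omega\Theta=-\tfrac{21}{4}f^2+\tfrac17 c=:p(f)$, a function of $f$ alone, because the $k_2$-terms cancel. Combining this with $\Omega\Theta=ab\,(f')^2$ yields $(f')^2=p/(ab)$, and one further differentiation expresses $E_1(E_1(f))$ as an explicit function of $(f,k_2)$ as well. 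If $\Omega\equiv 0$ or $\Theta\equiv 0$ on a subset, then $k_2$ or $k_3$ is locally constant and $\delta=0$, being then a quadratic in $f$ with nonzero leading coefficient $147$, forces $f$ constant; so I may assume $ab\neq 0$.

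Next I would use the sum \eqref{EquationGauss1}$+$\eqref{EquationGauss2}. Since $k_2+k_3=-k_1$, the $k_2$-terms cancel on the right, and with $p=p(f)$ this sum becomes the Riccati-type identity $\sigma'+\sigma^2=\tfrac74 f^2-\tfrac{12}{7}c$ for $\sigma=\Omega+\Theta$. Writing $\sigma=(a+b)f'$ and replacing $(f')^2$ and $E_1(E_1(f))$ by the functions of $(f,k_2)$ found above, every occurrence of $f'$ drops out and I am left with a purely algebraic relation, which after clearing denominators is a polynomial equation $R(f,k_2)=0$ genuinely involving $k_2$ (through $a,b$). Along the curve $s\mapsto(f(s),k_2(s))$ both $R=0$ and $\delta=0$ hold while $f$ ranges over an interval; eliminating $k_2$ then forces $\mathrm{Res}_{k_2}(R,\delta)$ to vanish identically as a polynomial in $f$.

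The main obstacle is exactly this elimination. I expect $\mathrm{Res}_{k_2}(R,\delta)$ to be a \emph{nonzero} polynomial in $f$ (with coefficients depending on $c$), for every $c$, whether positive, negative or zero; checking this is the heart of the matter and is cleanest with a computer algebra system, $R$ being a rather involved expression once the substitutions are cleared. This is precisely where the argument departs from the degenerate situation of \eqref{Relation1}: there the (modified) resultant is the zero polynomial, whereas here the second-order Gauss information produces a genuinely new relation whose resultant with $\delta$ does not vanish. Granting the nonvanishing, $f$ satisfies a nontrivial algebraic equation with constant coefficients on an interval, hence is locally constant, contradicting $\grad f\neq 0$. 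This excludes $\delta=0$, equivalently $14|A|^2-245f^2-96c=0$, on any open subset, proving the lemma.
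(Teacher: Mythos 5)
Your overall strategy is the same as the paper's: on the set where $\delta=0$ you differentiate $\delta=0$ along $E_1$ to get $k_2'=\tfrac{7(k_2-3f)}{4k_2-7f}f'$, write $\Omega=a\,f'$, $\Theta=b\,f'$, use \eqref{EquationGauss3} modulo $\delta=0$ to pin down $(f')^2$, and then feed a second, second-order Gauss relation into the derivative of that identity to eliminate $f''$ and land on a polynomial equation in $f$ with constant coefficients. Your intermediate identities check out: $\Omega\Theta=-\tfrac{21}{4}f^2+\tfrac17 c$ and $\left(\Omega+\Theta\right)'+\left(\Omega+\Theta\right)^2=\tfrac74 f^2-\tfrac{12}{7}c$ are both correct consequences of \eqref{EquationGauss1}--\eqref{EquationGauss3} and $\delta=0$, and your observation that only Case 1 of Lemma \ref{LemmaLeavesD} (hence only Codazzi and the tangent part) is needed when $m-r=3$ is exactly why the lemma makes sense for biconservative hypersurfaces. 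The one structural difference is which second relation you use: you take the plain sum \eqref{EquationGauss1}$+$\eqref{EquationGauss2}, whereas the paper uses \eqref{EquationNormalPart} -- which is legitimate here because, precisely when $m=7$, $r=4$, $\delta=0$, relation \eqref{Relation1} degenerates and \eqref{EquationNormalPart} becomes a consequence of \eqref{EquationGauss4} and \eqref{EquationGauss5}, so no appeal to the normal part of the biharmonic equation is actually being made.

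The genuine gap is that you stop at the decisive point. The entire content of this lemma is the claim that the final eliminant is a \emph{nonzero} polynomial in $f$, and you only write that you ``expect'' $\mathrm{Res}_{k_2}(R,\delta)$ to be nonzero and then ``grant'' it. This is exactly the kind of expectation that failed in \cite{YuFu} and created the need for this paper in the first place; moreover, since your $R$ comes from a different linear combination of the Gauss identities than the paper's (the sum, rather than the weighted combination encoded in \eqref{EquationNormalPart}), the paper's computation does not automatically certify yours -- one cannot exclude a priori that your Riccati relation reduces, modulo $\delta=0$ and $(f')^2=p/(ab)$, to a trivial identity. The paper closes this gap by explicit computation: \eqref{EquationGauss3Particular2} gives $98^2(32c-105f^2)(f')^2=(147f^2-4c)(128c-245f^2)(-833f^2+32c)$, its derivative \eqref{EquationDerFParticular1} combined with \eqref{EquationDerFParticular2} yields the degree-nine equation
$14386462720\,c^4f-356598824960\,c^3f^3-2331746708480\,c^2f^5+42758681977200\,cf^7+151265495839500\,f^9=0$,
whose leading coefficient is visibly nonzero for every $c$. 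Until you (or a computer algebra system) perform the analogous verification for your $R$, the proof is not complete.
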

	
		The proof of Lemma \ref{LemmaBicons} will be given at the end of this section.
	
		Using both the tangent and the normal parts of the biharmonic equation, we will derive more properties of the functions $\Omega$ and $\Theta$.
		
		\begin{lemma}\label{LemmaPropertiesOmegaTheta}
			The functions $\Omega$ and $\Theta$ satisfy
			\begin{align}
				& (r-1)(4-r)(mf +2k_2) \Omega ^2 + (r-m+3)(m(m-r+3) f - 2 (r-1)k_2) \Theta ^2 = \label{Relation2}\\
				=& \frac {9m^3(m-r+6)} {4(m-r)} f^3 + \frac {3m^2(r-1)(2r-2m-15)} {2(m-r)} f^2k_2 + \notag\\
				& + \frac {m(r-1)(m+11r-12 +2mr-2r^2)}{m-r} fk_2^2 + \frac {2(r-1)^2(m-2r+1)} {m-r}k_2^3- \notag\\
				& - m(2mr+4m-2r^2+5r)cf - 2(r-1)(m-2r+1)ck_2  \notag
			\end{align}
			and
			\begin{align}
				& \left ( \frac 9 4 m^3(3m-2r+17) f^3 + \frac 3 2 m^2(6r^2-43r+37+11m-11mr)f^2k_2+\right . \label{Relation6}\\
				& + m(r-1)(26r+4mr+1-4m)fk_2^2 + m(m-r)(8r-5mr-13m-17)cf - \notag\\
				& - \left . 2(r-1)^2(7+2m)k_2^3+2(m-r)(r-1)(m+17)ck_2 \right ) \Omega +\notag\\
				& + \left ( \frac 9 2 m^3(2r-2m-3)f^3 + \frac 9 2 m^2(7r-m+3-m^2+3mr-2r^2)f^2k_2 + \right . \notag\\
				& +  2m(r-1)(4m-13r-18-2mr+2m^2)fk_2^2 + \notag\\
				& +  m(m-r)(5mr-5m^2-7m-8r+42)cf + \notag\\
				& + \left . 2(r-1)^2(7+2m)k_2^3 - 2(r-1)(m-r)(m+17)ck_2 \right ) \Theta = 0. \notag
			\end{align}
		\end{lemma}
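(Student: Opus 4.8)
The plan is to obtain both identities algebraically from the relations already established, by converting each into a polynomial expression in $\Omega$ and $\Theta$ and then lowering its degree by means of \eqref{EquationGauss3} and \eqref{Relation2}. It is convenient to abbreviate \eqref{RelationDer8} as $E_1(f)=A\Omega+B\Theta$, where $A=-\tfrac{r-1}{3m}(mf+2k_2)$ and $B=-\tfrac{1}{3m}\bigl(m(m-r+3)f-2(r-1)k_2\bigr)$, and to write $C_1=(r-1)(4-r)(mf+2k_2)$ and $C_2=(r-m+3)\bigl(m(m-r+3)f-2(r-1)k_2\bigr)$ for the two coefficients on the left-hand side of \eqref{Relation2}, together with $D$ for its right-hand side. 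For \eqref{Relation2} itself I would substitute $E_1(f)=A\Omega+B\Theta$ into \eqref{Relation1}: the product $\bigl((4-r)\Omega+(r-m+3)\Theta\bigr)(A\Omega+B\Theta)$ produces $\Omega^2$-, $\Theta^2$- and $\Omega\Theta$-terms whose first two coefficients are $-\tfrac{1}{3m}C_1$ and $-\tfrac{1}{3m}C_2$. Using \eqref{EquationGauss3} and \eqref{CurvaturesDependency} to replace the mixed product by $\Omega\Theta=-c-k_2k_3=-c-\tfrac{3m}{2(m-r)}fk_2+\tfrac{r-1}{m-r}k_2^2$, multiplying by $-3m$ and collecting all the polynomial terms on the right, one obtains exactly \eqref{Relation2}.

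For \eqref{Relation6} I would differentiate \eqref{Relation1} along $E_1$. Writing \eqref{Relation1} as $L\,E_1(f)+P=0$ with $L=(4-r)\Omega+(r-m+3)\Theta$, differentiation yields $E_1(L)\,E_1(f)+L\,E_1(E_1(f))+E_1(P)=0$. I then eliminate every second-order quantity: $E_1(\Omega)$ and $E_1(\Theta)$ through the Riccati-type Gauss equations \eqref{EquationGauss1} and \eqref{EquationGauss2}, the term $E_1(E_1(f))$ through the normal part \eqref{EquationNormalPart}, and $E_1(k_2)=(k_1-k_2)\Omega$, $E_1(k_3)=(k_1-k_3)\Theta$ from \eqref{DefinitionOmegaTheta}; finally I substitute $E_1(f)=A\Omega+B\Theta$ for the remaining first-order factors. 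Since \eqref{EquationGauss1} and \eqref{EquationGauss2} express $E_1(\Omega),E_1(\Theta)$ as a degree-$2$ plus a degree-$0$ part in $\Omega,\Theta$, and \eqref{EquationNormalPart} expresses $E_1(E_1(f))$ likewise, all the resulting products land in homogeneous degrees $3$ and $1$ only, so that no degree-$2$ term survives.

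The observation that makes the reduction close up without any division is that the coefficients of the two pure cubes are precisely $\tfrac{r}{3m}C_1$ for $\Omega^3$ and $\tfrac{m-r+1}{3m}C_2$ for $\Theta^3$; indeed they equal $-(4-r)r\,A$ and $-(r-m+3)(m-r+1)\,B$, which are these very multiples of $C_1$ and $C_2$. Rewriting \eqref{Relation2} as $C_1\Omega^3=D\,\Omega-C_2(\Omega\Theta)\Theta$ and $C_2\Theta^3=D\,\Theta-C_1(\Omega\Theta)\Omega$, and invoking \eqref{EquationGauss3} once more to turn the surviving product $\Omega\Theta$ and the mixed cubes $\Omega^2\Theta=(\Omega\Theta)\Omega$, $\Omega\Theta^2=(\Omega\Theta)\Theta$ into polynomials, every cubic term collapses to a linear one. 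What is left is a relation of the form $[\cdots]\,\Omega+[\cdots]\,\Theta=0$ with polynomial coefficients in $f,k_2,c$; simplifying these with \eqref{CurvaturesDependency} gives precisely \eqref{Relation6}.

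The main obstacle is computational rather than conceptual: the polynomial coefficients accumulated through the differentiation and the successive substitutions are lengthy, and one has to check that, after the cubic reduction and the use of \eqref{CurvaturesDependency}, they collapse to the exact expressions stated in \eqref{Relation2} and \eqref{Relation6}. The two structural facts keeping this in check—the absence of degree-$2$ terms and the identification of the cube coefficients with scalar multiples of $C_1$ and $C_2$—are what force the final relation to be linear in $\Omega$ and $\Theta$; verifying the precise coefficients is most safely done with a computer algebra system, consistently with the paper's use of Mathematica.
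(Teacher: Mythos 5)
Your proposal is correct and follows essentially the same route as the paper: \eqref{Relation2} is obtained by substituting \eqref{RelationDer8} into \eqref{Relation1} and eliminating the mixed product $\Omega\Theta$ via \eqref{EquationGauss3}, and \eqref{Relation6} by differentiating \eqref{Relation1} along $E_1$ and reducing with \eqref{EquationGauss1}, \eqref{EquationGauss2}, \eqref{EquationNormalPart}, \eqref{DefinitionOmegaTheta} and \eqref{EquationGauss3}. The only difference is bookkeeping: the paper keeps $E_1(f)$ symbolic and removes the terms $\Omega^2E_1(f)$ and $\Theta^2E_1(f)$ by multiplying \eqref{Relation1} by $\Omega$ and $\Theta$ (its auxiliary relations \eqref{Relation3} and \eqref{Relation4}), whereas you substitute $E_1(f)=A\Omega+B\Theta$ first and then reduce the resulting cubes via \eqref{Relation2} --- algebraically equivalent manipulations --- and your identification of the $\Omega^3$ and $\Theta^3$ coefficients as $\tfrac{r}{3m}C_1$ and $\tfrac{m-r+1}{3m}C_2$ is a correct consistency check that makes the reduction close.
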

		\begin{proof}
			From \eqref{RelationDer8} and \eqref{Relation1} we get
			\begin{align*}
				& \left ( \frac {(r-1) (r-4)} 3 f + \frac {2(r-1) (r-4)} {3m} k_2 \right ) \Omega ^2 + \\
				& + \left ( \frac {2mr - 5m -2r^2+5r - 9} 3 f + \frac {2 (r-1) (m-2r+1)} {3m} k_2\right ) \Omega \cdot \Theta +\\
				& + \left ( \frac {(m-r+3)(m-r-3)} 3 f + \frac {2(r-1) (r-m+3)} {3m} k_2 \right ) \Theta ^2 + \\
				& + \frac {3m^2(m-r+6)} {4(m-r)} f^3 - \frac {3m(m+4r -2)} {2(m-r)} f^2k_2 + \frac {3m(r-1)} {m-r} fk_2^2 - 3(m+1)cf = 0.
			\end{align*}
			Using (\ref{EquationGauss3}) we obtain (\ref{Relation2}).
			
			Multiplying (\ref{Relation1}) by $\Omega$ we get
			\begin{align}
				(4-r)\Omega ^2 E_1(f) =& (r-m+3) (c+k_2k_3) E_1(f) - \left ( \frac {3m^2(m-r+6)} {4(m-r)} f^3 -\right . \label{Relation3}\\
				& \left . - \frac {3m(m+4r-2)} {2(m-r)} f^2k_2 + \frac {3m(r-1)} {m-r} fk_2 ^2 - 3(m+1)cf\right ) \Omega \notag
			\end{align}
			and multiplying (\ref{Relation1}) by $\Theta$ we obtain
			\begin{align}
				(r-m+3) \Theta ^2E_1(f) =& (4-r) (c+k_2k_3) E_1(f) - \left ( \frac {3m^2(m-r+6)} {4(m-r)} f^3 - \right . \label{Relation4}\\
				& \left . - \frac {3m(m+4r-2)} {2(m-r)} f^2k_2 + \frac {3m(r-1)} {m-r} fk_2^2 - 3(m+1) cf\right ) \Theta. \notag
			\end{align}
			Differentiating (\ref{Relation1}) by $E_1$ we get
			\begin{align*}
				&((4-r) E_1(\Omega) + (r-m+3) E_1(\Theta) ) E_1(f) + ((4-r) \Omega + (r-m+3) \Theta ) E_1(E_1(f)) +\\
				& + \frac {9m^2(m - r +6)} {4(m-r)} f^2E_1(f) - \frac {3m(m+4r-2)} {m-r} fk_2 E_1(f) - \\
				& - \frac {3m(m+4r-2)} {2 (m-r)} f^2 E_1(k_2) + \frac {3m(r-1)} {m-r} E_1(f) k_2^2 +\\
				& + \frac {6m(r-1)} {m-r} fk_2E_1(k_2) - 3(m+1)c E_1(f) = 0 \\
				\Leftrightarrow & \left ( (4-r) (-\Omega ^2 - c - k_1k_2) + (r-m+3) (-\Theta ^2 - c - k_1 k_3) \right ) E_1 (f) + \\
				& + \left ((4-r) \Omega + (r-m+3)\Theta \right ) \left ( -((r-1) \Omega + (m-r)\Theta ) E_1(f) +\right.\\
				& \left .+ (k_1 ^2 + (r-1) k_2^2 + (m-r) k_3^2 - mc)f\right)+\\
				& + \frac {9m^2(m-r+6)} {4(m-r)} f^2E_1(f) - \frac {3m(m+4r-2)} {m-r} fk_2 E_1(f) -\\
				& - \frac {3m(m+4r-2)} {2(m-r)} f^2E_1(k_2) + \frac {3m(r-1)} {m-r} k_2 ^2E_1(f)+\\
				& + \frac {6m(r-1)} {m-r} fk_2 E_1(k_2) - 3(m+1) c E_1(f) = 0,
			\end{align*}
			which is equivalent to 
			\begin{align}
				& \left ( (4-r) \left ( \frac m2 fk_2 - \Omega ^2 -c\right ) + (r-m+3) \left (\frac m2 fk_3 - \Theta ^2 -c \right ) \right ) E_1(f) - \label{Relation5}\\
				& - \left ( (4-r) \Omega + (r-m+3) \Theta \right ) \left ( (r-1) \Omega + (m-r) \Theta \right ) E_1(f) +\notag\\
				& + \left ( (4-r) \Omega + (r-m+3) \Theta \right ) \left ( \frac {m^2} 4 f^3 + (r-1) fk_2^2 + (m-r) fk_3^2 - mcf\right )+\notag\\
				& + \left ( \frac {9m^2(m-r+6)} {4(m-r)} f^2 - \frac {3m(m+4r-2)} {m-r} fk_2 + \frac {3m(r-1)} {m-r} k_2^2 - 3(m+1) c \right ) E_1(f) + \notag\\
				& + \left ( - \frac {3m(m+4r-2)} {2(m - r)} f^2 + \frac {6m(r-1)} {m-r} fk_2 \right ) E_1 (k_2) = 0.\notag
			\end{align}
			We will use (\ref{DefinitionOmegaTheta}), (\ref{Relation1}), (\ref{Relation3}), (\ref{Relation4}) in (\ref{Relation5}) to simplify it.
			\begin{align*}
				\ \ & -(4-r) \Omega ^2 E_1(f) - (r-m+3) \Theta ^2 E_1(f) + \\
				& +\left ( (4-r) \left ( \frac m2 f k_2 - c\right ) + (r-m+3) \left ( \frac {3m^2} {4(m-r)} f^2 - \frac {m(r-1)} {2(m-r)} fk_2 \right ) - c \right ) E_1(f) +\\
				& +\left ( (r-1)\Omega + (m-r)\Theta \right ) \left ( \frac {3m^2(m-r+6)} {4(m-r)} f^3 - \frac {3m(m+4r -2)} {2(m-r)} f^2k_2 + \right .\\
				& \left . + \frac {3m(r-1)} {m-r} f k_2 ^2 - 3 (m+1) cf \right ) +\\
				& + \left ((4-r) \Omega + (r-m+3)\Theta \right ) \left ( \frac {m^2(m-r+9)} {4(m-r)} f^3 + \frac {(r-1) (m-1)} {m-r} fk_2^2 +\right . \\
				& \left . + \frac {3m(1-r)} {m-r} f^2k_2 - mcf\right ) +\\
				& + \left ( \frac {9m^2(m-r+6)} {4(m-r)} f^2 - \frac {3m(m+4r -2)}{m-r} fk_2 + \frac {3m(r-1)} {m-r} k_2 ^2 - 3 (m+1)c \right ) E_1(f) + \\
				& + \left (-\frac {3m (m+4r-2)} {2(m-r)} f^2 + \frac {6m(r-1)} {m-r} fk_2 \right ) E_1(k_2) = 0 \\
				\Leftrightarrow& (m-r-3) \left ( c + \frac {3m} {2(m-r)} fk_2 - \frac {r-1} {m-r} k_2^2 \right ) E_1(f) + \left ( \frac {3m^2(m-r+6)} {4(m-r)} f^3 -\right .\\
				& - \left . \frac {3m(m+4r-2)} {2(m-r)} f^2 k_2 + \frac {3m (r-1)} {m-r} fk_2 ^2 - 3 (m+1) cf\right ) \Omega +\\
				& + (r - 4) \left ( c + \frac {3m} {2 (m-r)} fk_2 - \frac {r-1} {m-r} k_2 ^2 \right ) E_1 (f) + \left ( \frac {3m^2(m-r+6)} {4(m-r)} f^3 - \right .\\
				& - \left . \frac {3m(m+4r -2)} {2(m-r)} f^2 k_2 + \frac {3m(r-1)} {m-r} fk_2 ^2 - 3(m+1) cf \right ) \Theta + \\
				& + \left ( \frac {3m^2(r-m+3)} {4(m-r)} f^2 + \frac {3m(m-2r+1)} {2(m-r)} fk_2 + (m-7) c \right ) E_1(f) +\\
				& + \left ( (r-1) \Omega + (m-r)\Theta \right ) \left ( \frac {3m^2(m-r+6)} {4(m-r)} f^3 - \frac {3m(m+4r-2)} {2(m-r)} f^2k_2 +\right . \\
				& \left . + \frac {3m(r-1)} {m-r} fk_2^2  - 3 (m+1)c f\right ) + \\
				& +\left ( (4-r) \Omega + (r-m+3)\Theta \right ) \left ( \frac {m^2(m-r +9)} {4(m-r)} f^3 + \frac {(r-1) (m-1)} {m-r} fk_2 ^2 + \right .\\
				& \left . + \frac {3m(1-r)} {m-r} f^2k_2 - mcf \right ) +\\
				& + \left ( \frac {9m^2(m-r+6)} {4(m-r)} f^2 - \frac {3m(m+4r-2)} {m-r} fk_2 + \frac {3m(r-1)} {m-r} k_2 ^2 - 3(m+1) c\right ) E_1(f) +\\
				& + \left ( - \frac {3m(m+4r-2)} {2(m-r)} f^2 + \frac {6m(r-1)} {m-r} fk_2 \right ) \Omega \cdot (k_1 - k_2) = 0\\
				\Leftrightarrow & (m-7) \left (c + \frac {3m} {2(m-r)} fk _2 - \frac {r-1} {m-r} k_2^2\right ) E_1(f) + \\
				& + \left (\frac {3m^2(m-r+6)} {4(m-r)} f^3 - \frac { 3m (m+4r-2)} {2(m-r)} f^2k_2 + \frac {3m(r-1)} {m-r} fk_2^2 - 3(m+1)cf \right ) \Omega +\\
				& + \left (\frac {3m^2(m-r+6)} {4(m-r)} f^3 - \frac { 3m (m+4r-2)} {2(m-r)} f^2k_2 + \frac {3m(r-1)} {m-r} fk_2^2 - 3(m+1)cf \right ) \Theta +\\
				& + \left ( \frac {3m^2(r-m+3)} {4(m-r)} f^2 + \frac {3m(m-2r+1)} {2(m-r)} fk_2 + (m-7) c\right ) E_1(f) + \\
				& + \left ( (r-1) \Omega + (m-r) \Theta \right ) \left ( \frac {3m^2(m-r+6)} {4(m-r)} f^3 - \frac {3m(m+4r -2)} {2(m-r)} f^2k_2 +\right .\\
				& + \left . \frac {3m(r-1)} {m-r} fk_2^2 - 3(m+1) cf\right )+\\
				& + \left ( (4-r) \Omega + (r-m+3) \Theta \right ) \left ( \frac {m^2(m-r+9)} {4(m-r)} f^3 + \frac {(r-1)(m-1)} {m-r} fk_2^2 + \right . \\
				& + \left . \frac {3m(1-r)} {m-r} f^2k_2 - mcf\right ) + \\
				& + \left ( \frac {9m^2(m-r+6)} {4(m-r)} f^2 - \frac {3m(m+4r -2)} {m-r} fk_2 + \frac {3m(r-1)} {m-r} k_2^2 - 3(m+1) c \right ) E_1(f) + \\
				& + \left ( \frac {3m^2(m+4r-2)} {4(m-r)} f^3 + \frac {3m(3m-2mr+4r-2)} {2(m-r)} f^2k_2 + \frac {6m(1-r)} {m-r} fk_2^2\right ) \Omega = 0.
			\end{align*}
			This relation is equivalent to
			\begin{align}
				& \left ( \frac {3m^2(2m -2r +21)} {4(m-r)} f^2 - \frac {3m(1+5r)} {m-r} fk_2 + \frac {(r-1) (7+2m)} {m-r} k_2 ^2 - (m+17)c \right ) E_1(f) +\label{FirstDifference}\\
				& + \left ( \frac {m^2(7m+17r+30+2mr-2r^2)}{4(m-r)} f^3 + \frac {3m(6-4r+3m-3mr-2r^2)}{2(m-r)} f^2k_2 +\right . \notag\\
				& + \left . \frac {(r-1)(2mr-2m+r-4)}{m-r} fk_2^2 - (4m + 2mr + 3r) cf \right ) \Omega + \notag\\
				& + \left ( \frac {m^2(15m-15r+45+2m^2-4mr+2r^2)} {4(m-r)} f^3 + \frac {3m(8-10r-m-mr+2r^2-m^2)} {2(m-r)} f^2k_2 + \right . \notag\\
				& + \left . \frac {(r-1)(7m+2m^2-2mr-r-3)} {m-r} fk_2^2 + (2mr-9m-3+3r-2m^2)cf \right ) \Theta = 0. \notag
			\end{align}
			Using (\ref{RelationDer8}) the last relation becomes (\ref{Relation6}).
		\end{proof}
		
		\begin{remark}
			Until relation (\ref{Relation6}), see also \eqref{FirstDifference}, all our computations coincide with the Yu Fu's computations. Our relation \eqref{FirstDifference} slightly differs from relation just below (3.49) in \cite{YuFu}.
		\end{remark}
		Finally, from Lemma \ref{LemmaPropertiesOmegaTheta} we can deduce the expression of our first polynomial equation in $f$ and $k_2$ mentioned at the beginning of the proof.
		 
		First, we will denote by $P$ and $Q$ the coefficients of $\Omega$ and $\Theta$, respectively, in (\ref{Relation6}) and by $R$ the right-hand side of (\ref{Relation2}). Thus, we get 
		\begin{equation}\label{Relation7}
			(r-1) (4-r) (mf+2k_2) \Omega ^2 + (3+r-m) ( m(m-r+3) f - 2(r-1) k_2) \Theta ^2 = R,
		\end{equation}
		\begin{equation}\label{Relation8}
			P\Omega + Q\Theta = 0.	
		\end{equation}
		Multiplying (\ref{Relation7}) by $PQ$ we obtain
		\begin{align}
			& (3+r-m) \left ( m(m-r+3) f - 2(r-1) k_2\right ) P^2 \left ( c + \frac {3m} {2(m-r)} fk_2 - \frac {r-1} {m-r} k_2^2\right ) + \label{Relation9}\\
			& + (r-1)(4-r) (mf+2k_2) Q^2 \left (c + \frac {3m} {2(m-r)} fk_2 - \frac {r-1} {m-r} k_2^2\right ) = PQR. \notag
		\end{align}
		Equation (\ref{Relation9}) can be written as 
		\begin{equation}\label{Polynomial1}
			\sum _{i = 0} ^9 a_{i,9-i} k_2^i f^{9-i} + c\left ( \sum _{i = 0} ^7 a_{i,7-i} k_2^i f^{7-i} + \sum _{i = 0} ^5 a_{i,5-i} k_2^i f^{5-i} + \sum _{i = 0} ^3 a_{i,3-i} k_2^i f^{3-i} \right ) = 0,
		\end{equation}
		where the coefficients $a_{ij}$ depend on $m,\ r$ and $c$, thus they are constants.
		
		We can easily show that for any $m$, $r$ and $c$
		\begin{align*}
			& a_{9,0} = \frac {729m^9(2m-2r+3)(3m-2r+17)(m-r+6)} {32(m-r)} > 0,\\
			& a_{0,9} = 0.
		\end{align*}
		Therefore, the left hand-side of (\ref{Polynomial1}) is a non-zero polynomial.
		
		If $k_2$ is constant on $M$ then, from (\ref{Polynomial1}), we obtain a $9^{th}$-degree polynomial in the variable $f$ with constant coefficients, thus $f$ is constant on $M$, contradiction.
		
		We will assume that $k_2$ is not constant on $M$. Restricting $M$, if necessary, we can suppose that $\grad k_2 \neq 0$ and $k_2 \neq 0$ at any point of $M$. The fact that $\grad k_2 = E_1(k_2) E_1 \neq 0$ at any point of $M$ implies that $E_1(k_2) \neq 0$ at any point of $M$. Therefore $\Omega \neq 0$ at any point of $M$.
		
		Let $\gamma : I \to M$ be an integral curve of $E_1$, $\gamma = \gamma(t)$. 
		
		\begin{lemma}\label{LemmaKFconst}
			Along $\gamma$, the ratio $k_2/f$ cannot be constant.
		\end{lemma}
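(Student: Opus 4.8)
The plan is to argue by contradiction, assuming that along $\gamma$ one has $k_2/f \equiv \lambda$ for some constant $\lambda$. Since the tangent part of the biharmonic equation already gives $k_1 = -\frac m2 f$, and \eqref{CurvaturesDependency} then yields $k_3 = \left(\frac{3m}{2(m-r)} - \frac{(r-1)\lambda}{m-r}\right)f =: \mu_3 f$, all three principal curvatures become constant multiples of $f$ along $\gamma$. Note that $\lambda \neq 0$ (we have arranged $k_2 \neq 0$) and, by \eqref{CurvaturesNEQ}, $m + 2\lambda \neq 0$ and $m + 2\mu_3 \neq 0$ (these encode $k_2 \neq k_1$ and $k_3 \neq k_1$). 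Setting $w = E_1(f)/f$, which is well defined and nowhere vanishing because $E_1(f) = |\grad f| \neq 0$ and $f > 0$, the definitions \eqref{DefinitionOmegaTheta} become $\Omega = a\,w$ and $\Theta = b\,w$, where $a = -2\lambda/(m+2\lambda) \neq 0$ and $b = -2\mu_3/(m+2\mu_3)$ are constants.

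Suppose first that $k_3 \not\equiv 0$, so that $b \neq 0$. The idea is to use the Gauss relations to eliminate the derivative of $w$. Differentiating \eqref{EquationGauss3}, which now reads $ab\,w^2 = -c - \lambda\mu_3 f^2$, along $E_1$ and using $E_1(f) = wf$ gives $E_1(w) = -\tfrac{\lambda\mu_3}{ab}f^2$. Substituting this, together with $a^2 w^2 = \tfrac ab\left(ab\,w^2\right) = \tfrac ab\left(-c - \lambda\mu_3 f^2\right)$, into \eqref{EquationGauss1} produces a relation of the form $\alpha + \beta f^2 = 0$ valid along $\gamma$, whose coefficients $\alpha, \beta$ are constants depending only on $m, r, \lambda, c$. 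Because $E_1(f) = |\grad f| \neq 0$ makes $f$ strictly monotone, hence surjective onto an open interval, the polynomial $\alpha + \beta f^2$ in $f$ must vanish identically; in particular $\beta = 0$. A short computation identifies $\beta$ with a nonzero multiple of $\mu_3 - \lambda$, so $\beta = 0$ forces $\mu_3 = \lambda$, that is $k_3 = k_2$, contradicting the assumption of three distinct principal curvatures.

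The remaining branch $k_3 \equiv 0$ (equivalently $b = 0$, $\lambda = \frac{3m}{2(r-1)}$) is the step I expect to be the main obstacle, since there \eqref{EquationGauss3} becomes vacuous and the elimination above no longer applies. In this case $\Theta = 0$, so \eqref{EquationGauss2} immediately forces $c = 0$. To finish, I would bring in the normal part of the biharmonic equation: putting $k_3 = 0$, $\Theta = 0$ and $c = 0$ into \eqref{EquationNormalPart} gives one second-order expression for $E_1(E_1(f))$, while \eqref{EquationGauss1} (with $\Omega = a\,w$) gives another; equating them eliminates $E_1(E_1(f))$ and leaves a relation of the form $\left(\text{const}\right)(E_1(f))^2 = \left(\text{const}\right)f^4$ with coefficients depending only on $m$ and $r$. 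This already fails outright when $r = 4$ (the left coefficient vanishes while the right does not), and for $r \neq 4$ a further differentiation along $E_1$ and comparison with \eqref{EquationGauss1} produces an algebraic identity in $r$ with no admissible solution. Either way we reach a contradiction, which proves that $k_2/f$ cannot be constant along $\gamma$.
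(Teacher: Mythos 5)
Your argument is correct and follows essentially the same route as the paper: assume $k_2=\alpha f$, observe that then $k_1,k_2,k_3$ are all constant multiples of $f$, split on whether $k_3\equiv 0$ (where \eqref{EquationGauss2} forces $c=0$ and the normal equation \eqref{EquationNormalPart} must be brought in), and in each branch eliminate derivatives of $f$ to reach a nontrivial polynomial identity in $f$ with constant coefficients, contradicting $E_1(f)\neq 0$. The only differences are organizational: in the main case you use \eqref{EquationGauss1}, \eqref{EquationGauss3} and the $E_1$-derivative of \eqref{EquationGauss3}, whereas the paper combines \eqref{EquationGauss1}, \eqref{EquationGauss2} and \eqref{EquationGauss3} directly (both yield the same relation, a nonzero multiple of $c/(\alpha\beta)+f^2=0$); and in the $k_3\equiv 0$ branch you equate the two expressions for $fE_1(E_1(f))$ to get $(E_1(f))^2=Cf^4$ and finish with one more differentiation, while the paper solves the corresponding first-order ODE system for $(E_1(f))^2$ as a function of $f$ explicitly and extracts a degree-six polynomial. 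Your version of that branch is actually lighter; I checked that the sub-case $r<4$ you only sketch does close up (the further differentiation forces $C=-(m+2\alpha)^2/4<0$ against $C>0$), so nothing essential is missing.
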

		\begin{proof}
			We will work on $\gamma$. Assume, by way of contradiction, that $k_2/f$ can be a constant. Let $\alpha \in \mathbb R$ be a non-zero constant such that $k_2 = \alpha f$. We have
			$$
			k_3 = \frac {3m - 2\alpha (r-1)} {2(m-r)} f.
			$$
			Thus, 
			$$
				k_1 = -\frac m2 f,\quad k_2 = \alpha f,\quad k_3 = \beta f,
			$$
			where $\beta = \frac {3m - 2\alpha (r-1)} {2(m-r)}$. Using \eqref{CurvaturesNEQ} we get that $\alpha \neq -m/2$, $\beta \neq - m/2$ and $\alpha \neq \beta$.
			
			From \eqref{EquationGauss1} we obtain 
			\begin{equation}\label{EquationKFconst1}
				fE_1(E_1(f)) = \frac {m+4\alpha} {m+2\alpha} \left (E_1(f) \right )^2 + \frac {m+2\alpha} {2\alpha} c f^2 - \frac {m(m+2\alpha)} 4 f^4.
			\end{equation}
			We want to show that $\beta \neq 0$. If $\beta = 0$, then $k_3 = 0$ and relation \eqref{EquationGauss2} is equivalent to $c = 0$.
			
			Using relation \eqref{EquationNormalPart} we get 
			\begin{equation}\label{EquationKFconst3}
				fE_1\left ( E_1(f)\right ) = - \alpha (r-1) f \left ( E_1(f)\right )^2 + \frac {m^2+4(r-1)\alpha ^2} 4 f^4.
			\end{equation}
			Relation \eqref{EquationKFconst1} becomes
			\begin{equation} \label{EquationKFconst4}
				fE_1\left ( E_1(f)\right ) = \frac {m+4\alpha} {m+2\alpha} \left ( E_1(f) \right ) ^2 - \frac {m(m+2\alpha)} 4 f^4.
			\end{equation}
			We consider 
			$$
			w(t) = \left ( E_1(f) \right )^2(\gamma (t)) = \left ( \gamma '(t) (f)\right )^2 = \left ( \left ( f \circ \gamma \right )' (t)\right )^2 = \left ( f'(t)\right ) ^2.
			$$
			Since $t \longmapsto f(t)$ is a diffeomorphism and $t = t(f)$, we have $w = w(t) = w \left ( t (f)\right )$, $f \in f(I)$. We denote by $\overline w (f) = w (t(f))$ and obtain 
			\begin{align*}
				\frac {d\overline w} {df} (f) =& \frac {dw} {dt} \left ( t (f)\right ) \frac {dt} {df} (f) = \frac {dw} {dt} (t(f)) \frac 1 {\frac {df} {dt} \left (t(f)\right )}\\
											  =& 2 f''(t) f'(t) \frac 1 {f'(t)} = 2 f''(t) = 2 \left ( E_1\left (E_1(f)\right ) \right )	(\gamma (t)) .
			\end{align*}
			Relations \eqref{EquationKFconst3} and \eqref{EquationKFconst4} become
			\begin{equation*}
				\left \{
				\begin{array}{rl}
					\displaystyle \frac 12 f \frac {d \overline w} {df} =&\displaystyle \frac {m^2 + 4(r-1)\alpha ^2} 4 f^4 - \alpha (r-1) f \overline w\vspace{5pt}\\
					\displaystyle \frac 12 f \frac {d \overline w} {df} =&\displaystyle - \frac {m(m+2\alpha)} 4 f^4 + \frac {m+4\alpha} {m+2\alpha} \overline w
				\end{array}
				\right ..
			\end{equation*}
			Using these two equations we obtain that
			\begin{align*}
				\overline w = \frac {\left (m^2+4(r-1)\alpha ^2 + m(m+2\alpha)\right )(m+2\alpha)} 4 \cdot \frac {f^4} {(m+2\alpha) \alpha (r-1) f + m + 4\alpha}.
			\end{align*}
			Differentiating with respect to $f$ we get
			\begin{align*}
				\frac {d \overline w} {df} =& \frac {\left (m^2+4(r-1)\alpha ^2 + m(m+2\alpha)\right )(m+2\alpha)} 4 \cdot\\
										    & \cdot \frac {4\left ( (m+2\alpha) \alpha (r-1) f + m+4\alpha\right ) f^3 - (m+2\alpha)\alpha (r-1) f^4} {\left ( (m+2\alpha) \alpha (r-1) f + m + 4\alpha\right )^2}.
			\end{align*}
			Substituting in the second equation of the system we get 
			\begin{align*}
				& \frac {m(m+2\alpha)} 4 f^4 \left ( (m+2\alpha)\alpha(r-1)f + m+4\alpha\right )^2 +\\
				& + \frac 1 8 f \left ( m^2 + r(r-1)\alpha ^2 + m(m+2\alpha) \right )(m+2\alpha)\cdot\\
				& \cdot \left ( 3(m+2\alpha) \alpha (r-1) f^4 + 4(m+4\alpha)f^3\right ) -\\
				& - \frac {(m+4\alpha)\left (m^2 + 4(r-1)\alpha^2+m(m+2\alpha)\right )} 4 f^4 \cdot\\
				& \cdot \left ((m+2\alpha)\alpha(r-1)f+m+4\alpha\right )=0
			\end{align*}
			We obtain a $6^{th}$-degree polynomial relation in $f$ with the dominant term 
			$$
			\frac 14 m(m+2\alpha)^3\alpha^2(r-1)^2.
			$$
			Since $\alpha \neq -m/2$, we obtain a polynomial equation in the variable $f$, with constant coefficients, thus $f$ is constant along $\gamma$, contradiction.
			
			Therefore, $\beta \neq 0$.
			
			From \eqref{EquationGauss2} we get
			\begin{equation}\label{EquationKFconst5}
				fE_1\left ( E_1 (f)\right ) = \frac {m+4\beta} {m+2\beta} \left ( E_1(f)\right )^2 + \frac {m+2\beta} {2\beta} cf - \frac {m(m+2\beta)} 4 f^4.
			\end{equation}
			Using \eqref{EquationGauss3} we get
			\begin{equation}\label{EquationKFconst2}
				\left ( E_1(f)\right ) ^2 = -\frac {(m+2\alpha) (m+2\beta)} {4\alpha \beta} cf^2 - \frac {(m+2\alpha)(m+2\beta)} 4 f^4.
			\end{equation}
			Combining \eqref{EquationKFconst1}, \eqref{EquationKFconst2} and \eqref{EquationKFconst5} we obtain
			$$
			\frac {m(\beta - \alpha)} {\alpha \beta} cf^2 + m(\beta - \alpha)f^4 = 0.
			$$
			Since $\beta \neq \alpha$, the last relation is a polynomial equation with constant coefficients in the variable $f$, contradiction.
		\end{proof}
		
		We can distinguish two cases.
		
		\begin{center}
			\textbf{Case 1: } $\mathbf{c = 0}$.
		\end{center}
		Relation \eqref{Polynomial1} becomes
		$$
		\sum _{i = 0} ^9 a_{i,9-i} k_2^i f^{9-i} = 0.
		$$
		Using the fact that $f > 0$, we can divide with $f^9$ and obtain
		$$
		\sum _{i = 0} ^9 a_{i,9-i} \left ( \frac {k_2} f\right )^i = 0.
		$$
		Therefore, we get a polynomial equation in the variable $z = k_2 / f$ with constant coefficients. We have seen that $a_{9,0}$ is not zero and does not depend on $c$, thus this polynomial is non-zero and this implies that $z$ is constant, which contradicts Lemma \ref{LemmaKFconst}.
		
		\begin{center}
			\textbf{Case 2:} $\mathbf {c \neq 0}$.
		\end{center}
		Along $\gamma$ we have
		$$
		k_2'(t)= \left ( E_1(k_2) \right ) (\gamma (t)) \neq 0, \forall t \in I,
		$$
		where $k_2 := k_2 \circ \gamma$. In this case $t \longmapsto k_2(t)$ is a diffeomorphism and $t = t(k_2)$. 
		
		Next, we will denote $$f (t) = (f\circ \gamma) (t), \quad \tilde f (k_2) = f(t(k_2))\quad \text{and} \quad \tilde \gamma (k_2) = \gamma (t (k_2)).$$
		
		A direct consequence of Lemma \ref{LemmaKFconst} is
		\begin{remark}\label{RemarkKFconst}
			Along $\tilde \gamma$, the ratio $k_2/\tilde f$ cannot be a constant.
		\end{remark}
		
		\begin{lemma}\label{LemmaQnotVanishes}
			Along $\tilde \gamma$, $\tilde Q = Q \circ \tilde \gamma$ does not vanishes.
		\end{lemma}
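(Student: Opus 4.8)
The plan is to argue by contradiction and reduce the nonvanishing of $\tilde Q$ to a statement about the common zeros of the two coefficients $P$ and $Q$ appearing in \eqref{Relation6}. Suppose that $\tilde Q$ vanishes at some point of $\tilde\gamma$. Evaluating \eqref{Relation8} there and using that $\Omega\neq 0$ at every point of $M$ (established just before Lemma \ref{LemmaKFconst}), we immediately obtain $\tilde P=0$ at the same point. Hence every zero of $\tilde Q$ along $\tilde\gamma$ is a common zero of $P$ and $Q$, evaluated at the corresponding values $(f,k_2)=(\tilde f,k_2)$.

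First I would regard $P$ and $Q$ as polynomials in the single variable $k_2$, whose coefficients are polynomials in $f$ with constant coefficients depending only on $m$, $r$ and $c$. A common zero of $P$ and $Q$ forces their resultant $\operatorname{Res}_{k_2}(P,Q)$, which is a polynomial in $f$ alone with constant coefficients, to vanish at the corresponding value of $f$. Along $\gamma$ the function $f$ is strictly monotone, since $E_1(f)=|\grad f|>0$, so $\tilde f$ is injective; consequently, if $\tilde Q$ vanished on a whole subinterval, then $\operatorname{Res}_{k_2}(P,Q)$ would vanish on an interval of $f$-values and hence be the zero polynomial. The crux of the argument is therefore to verify that $\operatorname{Res}_{k_2}(P,Q)$ is \emph{not} the zero polynomial; exactly as for \eqref{Polynomial1}, this is done by exhibiting a single nonzero coefficient (for instance the coefficient of the top power of $f$), the computation being carried out with Mathematica. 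Once this is known, $\tilde Q$ cannot vanish identically on any subinterval and its zeros are isolated, so, restricting the interval $I$ if necessary exactly as was done repeatedly above, we conclude that $\tilde Q$ does not vanish along $\tilde\gamma$.

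The main obstacle is the explicit computation and non-triviality of $\operatorname{Res}_{k_2}(P,Q)$: since $P$ and $Q$ are cubic in $k_2$ with an extra linear-in-$c$ part, the resultant is a high-degree polynomial in $f$ whose coefficients are unwieldy expressions in $m$ and $r$, and it must be shown to be a nonzero polynomial for every admissible pair $(m,r)$ with $c\neq 0$. I expect the delicate configuration to be precisely $m=7$, $r=4$ — the case singled out in Lemma \ref{LemmaBicons}, where \eqref{Relation1} degenerates along $\delta=0$ — so particular care is needed to confirm that the resultant remains genuinely nonzero there, or else that this configuration is handled by the separate biconservative analysis of Lemma \ref{LemmaBicons}.
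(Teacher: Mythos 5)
Your proposal is correct and follows essentially the same route as the paper: a zero of $\tilde Q$ forces $\tilde P=0$ via \eqref{Relation8} and $\Omega\neq 0$, so the resultant $\operatorname{Res}_{k_2}(P,Q)$ — a polynomial in $f$ with constant coefficients — must vanish at the corresponding $f$-values, and the whole argument reduces to checking by symbolic computation that this resultant is a non-zero polynomial when $c\neq 0$. Your final worry about $m=7$, $r=4$ is unfounded here: the paper's Appendix A exhibits the coefficient of $f^3$ as $1474560\,c^3(-1+m)m^3(5+m)(7+2m)^3(20-3m+m^2)^2(m-r)^3(-1+r)^6$, which is non-zero for every admissible integer pair $1<r<m$ as soon as $c\neq 0$, so no special treatment of that case is needed for this lemma.
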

		\begin{proof}
			If $\widetilde Q (k_2) = 0$, for any $k_2$, then, using the fact that $\widetilde \Omega \neq 0$, from \eqref{Relation8} we obtain $\widetilde P = 0$.
			
			The relations $\widetilde P (k_2) = 0$ and $\widetilde Q (k_2) = 0$, for any $k_2$, can be thought of as two polynomial equations in $k_2$ with coefficients depending on the function $\tilde f = \tilde f (k_2)$. We arbitrarily set $k_2 = k_2^0$ and thus the coefficients of the above two equations become constants. Now, we consider two polynomial equations in the variable $z = k_2$ with the corresponding above constant coefficients. Clearly, $z = k_2^0$ is a common solution of the last two equations. Therefore, the resultant of these polynomials with constant coefficients has to be $0$. The resultant, which is a real number, can be written as a polynomial relation in $\tilde f = \tilde f (k_2^0)$. Letting $k_2^0$ free, we get that $\tilde f = \tilde f (k_2)$ is a solution of a polynomial equation with constant coefficients. Using Mathematica (see Appendix \ref{AppCasePis0}), it can be shown that, since $c\neq 0$, this polynomial is non-zero. The fact that $\tilde f$ is continuous implies that $\tilde f$ is a constant function, thus $f$ is constant along $\tilde \gamma$, contradiction.
		\end{proof}
		
		We can express the derivative of $\tilde f$ with respect to $k_2$ as a rational relation in $\tilde f$ and $k_2$.
		\begin{lemma}\label{LemmaDerivativeF}
			Along $\tilde \gamma$ the derivative of $\tilde f$ with respect to $k_2$ is 
			\begin{equation}\label{DerF}
				\frac {d\tilde f} {dk_2} = \frac {2(r-1)} {3m} - \frac {2(m(m-r+3)\tilde f - 2(r-1) k_2 ) \widetilde P} { 3m(m\tilde f + 2k_2) \widetilde Q}.
			\end{equation}
		\end{lemma}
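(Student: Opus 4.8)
The plan is to compute $d\tilde f/dk_2$ directly via the chain rule and then eliminate $\Omega$ and $\Theta$ using the two relations already available, namely \eqref{RelationDer8} and \eqref{Relation8}. Working along the integral curve $\gamma$ of $E_1$, reparametrized by $k_2$ (which is legitimate since $t\longmapsto k_2(t)$ is a diffeomorphism, as noted just before Remark~\ref{RemarkKFconst}), the chain rule gives
$$
\frac{d\tilde f}{dk_2}=\frac{df/dt}{dk_2/dt}=\frac{E_1(f)}{E_1(k_2)},
$$
both sides being evaluated along $\gamma$. So the whole task reduces to expressing this quotient in the claimed form.

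First I would rewrite the denominator. By the definition \eqref{DefinitionOmegaTheta} of $\Omega$ we have $E_1(k_2)=(k_1-k_2)\Omega$, and since $k_1=-\tfrac m2 f$ this becomes $E_1(k_2)=-\tfrac12(mf+2k_2)\Omega$. For the numerator I would start from \eqref{RelationDer8} and factor its two coefficients, obtaining
$$
E_1(f)=-\frac{r-1}{3m}(mf+2k_2)\,\Omega-\frac{1}{3m}\bigl(m(m-r+3)f-2(r-1)k_2\bigr)\,\Theta .
$$
Next I would use Lemma~\ref{LemmaQnotVanishes}, which guarantees $\widetilde Q\neq0$ along $\tilde\gamma$, so that \eqref{Relation8} may be solved as $\Theta=-\tfrac{P}{Q}\Omega$. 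Substituting this into the expression for $E_1(f)$ and recalling that $\Omega\neq0$ on $M$ (established right before Lemma~\ref{LemmaKFconst}), the common factor $\Omega$ cancels between numerator and denominator.

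After this cancellation the quotient is
$$
\frac{d\tilde f}{dk_2}
=\frac{-\tfrac{r-1}{3m}(mf+2k_2)+\tfrac{1}{3m}\bigl(m(m-r+3)f-2(r-1)k_2\bigr)\tfrac{P}{Q}}{-\tfrac12(mf+2k_2)},
$$
and splitting the fraction and simplifying yields exactly \eqref{DerF} with $\tilde f$, $\widetilde P$, $\widetilde Q$ in place of $f$, $P$, $Q$. There is no genuine obstacle here: the argument is a short chain-rule computation, and the only points requiring care are the two non-vanishing facts ($\Omega\neq0$ and $\widetilde Q\neq0$) that license the division and the cancellation. The mild bookkeeping in the factorizations of the coefficients of $\Omega$ and $\Theta$ is routine and I would not grind through it in detail.
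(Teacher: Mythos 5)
Your proposal is correct and follows essentially the same route as the paper: the chain rule reduces the claim to computing $E_1(f)/E_1(k_2)$ along $\gamma$, the numerator is taken from \eqref{RelationDer8}, the denominator from $E_1(k_2)=(k_1-k_2)\Omega$, and \eqref{Relation8} eliminates $\Theta$ before the nonzero factor $\Omega$ cancels. Your explicit invocation of Lemma \ref{LemmaQnotVanishes} to license the division by $\widetilde Q$ is a point the paper leaves implicit, but otherwise the arguments coincide.
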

		\begin{proof}
			We have
			\begin{align*}
				\frac {d\tilde f} {dk_2} (k_2) =& \frac {df} {dt} (t(k_2)) \frac {dt} {dk_2} (k_2) = \frac {df} {dt} (t(k_2)) \frac 1 {\frac {dk_2} {dt} (t(k_2))} \\
				=& \left ( \frac {E_1(f)} {E_1(k_2)} \right ) \left ( \tilde \gamma (k_2) \right ).
			\end{align*}
			Therefore, along $\tilde \gamma $ we have
			\begin{align*}
				\frac {d\tilde f} {dk_2} (k_2) =& \left (\frac {E_1(f)} {E_1(k_2)} \right ) (\tilde \gamma(k_2))\\
				=& \frac { - \left ( \frac {r-1} 3 \tilde f + \frac {2(r-1)} {3m} k_2 \right ) \widetilde \Omega + \left ( - \frac {m - r + 3} 3 \tilde f + \frac {2(r-1)} {3m} k_2 \right ) \widetilde \Theta} { \left (-\frac m2 \tilde f - k_2\right ) \widetilde \Omega}.
			\end{align*}
			Using (\ref{Relation8}) we obtain the conclusion.
			\end{proof}
			
			In light of Lemma \ref{LemmaQnotVanishes}, we can restrict $\tilde \gamma$, if necessary, and assume that $\widetilde Q(k_2) \neq 0$ for any $k_2$.
			
			To get the second polynomial equation, we will differentiate the first polynomial given by \eqref{Polynomial1} with respect to $k_2$ along $\tilde \gamma$ and substituting the derivative of $\tilde f$ with respect to $k_2$ from Lemma \ref{LemmaDerivativeF}, we obtain
			\begin{align}
				\sum _{i=0} ^{12} b_{i,12-i} k_2^i \tilde f ^{12 - i} +& c\left ( \sum _{i=0} ^{10} b_{i,10-i} k_2^i \tilde f ^{10 - i} + \sum _{i=0} ^{8} b_{i,8-i} k_2^i \tilde f ^{8 - i} + \right . \label{Polynomial2}\\
				& + \left . \sum _{i=0} ^{6} b_{i,6-i} k_2^i \tilde f ^{6 - i} + \sum _{i=0} ^{4} b_{i,4-i} k_2^i \tilde f ^{4 - i} \right ) = 0,\notag
			\end{align}
			where the coefficients $b_{ij}$ depend on $m$, $r$ and $c$ and are constants.
			
			Relations \eqref{Polynomial1} and \eqref{Polynomial2} can be seen as two polynomial equations in $k_2$ with coefficients depending on the function $\tilde f = \tilde f(k_2)$. As in the proof of Lemma \ref{LemmaQnotVanishes}, we can compute the resultant of the two polynomials and, finally, we can obtain a polynomial in $\tilde f$ with constant coefficients. If this polynomial is non-zero, then we get a contradiction and we end the proof.
			
			Since the derivative of $\tilde f$ with respect to $k_2$ is different from zero, for any $k_2$, we can change the point of view and \eqref{Polynomial1} together with \eqref{Polynomial2} can be thought of as two polynomial equations in $\tilde f$ with coefficients depending on the function $\tilde k_2 = \tilde k_2 (\tilde f)$. As we described above, we can compute the resultant for these new polynomials obtaining a polynomial in $\tilde k_2$ with constant coefficients. Again, if this polynomial is non-zero, then we get a contradiction.
			
			Since the volume of computations is very big, we could not compute the resultant for generic $c$, $m$ and $r$. Because of that, we made a programme in Mathematica which computes the resultant for any particular choice of $c$, $m$ and $r$.
			
			In the first situation, when the resultant is a polynomial in $\tilde f$, we obtain that, for any $c \neq 0$, $m \in \overline {4,30}$ and all possible values of $r$, the only case when the resultant is the zero polynomial is given by
			$$
			m=7 \quad \text{and} \quad r=4
			$$
			(see Appendix \ref{AppResultantF}).
			
			In the second situation, when the resultant is a polynomial in $k_2$, the resultant (which should be a polynomial in $k_2$) is the zero polynomial for any $c\neq 0$, $m \in \overline {4, 30}$ and for all possible values of $r$ (see Appendix \ref{AppResultantF}).
			
			In order to reduce the volume of computations and to compute the resultant for the generic case, we will reduce the degree of polynomials in \eqref{Polynomial1} and \eqref{Polynomial2}. Thus, we divide \eqref{Polynomial1} by $\tilde f^3$ and \eqref{Polynomial2} by $\tilde f^4$, and denoting $\tilde z = k_2/\tilde f$, we obtain 
			\begin{equation}\label{Polynomial3}
				\sum _{i = 0} ^9 a_{i,9-i} \tilde z^i \tilde f^{6} + c\left ( \sum _{i = 0} ^7 a_{i,7-i} \tilde z^i \tilde f^{4} + \sum _{i = 0} ^5 a_{i,5-i} \tilde z^i \tilde f^{2} + \sum _{i = 0} ^3 a_{i,3-i} \tilde z^i \right ) = 0
			\end{equation}
			and
			\begin{align}
				\sum _{i=0} ^{12} b_{i,12-i} \tilde z^i \tilde f ^{8} +& c\left ( \sum _{i=0} ^{10} b_{i,10-i} \tilde z^i \tilde f ^{6} + \sum _{i=0} ^{8} b_{i,8-i} \tilde z^i \tilde f ^{4} + \right . \label{Polynomial4}\\
				& + \left . \sum _{i=0} ^{6} b_{i,6-i} \tilde z^i \tilde f ^{2} + \sum _{i=0} ^{4} b_{i,4-i} \tilde z^i \right ) = 0.\notag
			\end{align}
			
			Using Mathematica we can compute the resultant of these polynomials in the general case and prove that it vanishes only when $m=7$ and $r=4$ (see Appendix \ref{AppNewPolynomials}).
			
			\begin{center}
				\textbf{Case 3: $\mathbf{c\neq 0}$, $\mathbf{m=7}$ \textbf{and} $\mathbf{r=4}$.}
			\end{center}
				
			In this case equations \eqref{Polynomial1} and \eqref{Polynomial2} become
			\begin{align}
				& \frac {45927}{16} \left (32 c - 147 \tilde f^2 + 98 \tilde f k_2 - 28 k_2^2\right )\cdot \label{PolynomialSpecialCase1}\\
				& \cdot \left (336 c \tilde f - 1715 \tilde f^3 - 32 c k_2 + 1470 \tilde f^2 k_2 - 294 \tilde f k_2^2 + 28 k_2^3\right )\cdot \notag \\
				& \cdot \left (32 c (7 \tilde f + k_2) + 7 \left (147 \tilde f^3 - 63 \tilde f^2 k_2 - 4 k_2^3\right )\right ) = 0\notag
			\end{align}
			and
			\begin{align}
				& \frac {1240029}{16} \left (7 \tilde f - 4 k_2\right ) \left (32 c - 147 \tilde f^2 + 98 \tilde f k_2 - 28 k_2^2\right )\cdot \label{PolynomialSpecialCase2}\\
				& \cdot \left [81920 c^3 \left (343 \tilde f^2 + 7 \tilde f k_2 - 2 k_2^2\right ) - 7168 c^2 \left (66542 \tilde f^4 - \right . \right.\notag \\
				& - \left . 12201 \tilde f^3 k_2 + 2653 \tilde f^2 k_2^2 + 476 \tilde f k_2^3 - 68 k_2^4\right ) - \notag\\
				& - 784 c \left (2384193 \tilde f^6 - 1172717 \tilde f^5 k_2 + 559384 \tilde f^4 k_2^2 - \right .\notag \\
				& - \left . 154252 \tilde f^3 k_2^3 + 40656 \tilde f^2 k_2^4 - 6384 \tilde f k_2^5 + 608 k_2^6\right ) + \notag\\ 
				& + 2401 \left (6950895 \tilde f^8 - 10169607 \tilde f^7 k_2 + 5436942 \tilde f^6 k_2^2 -\right . \notag \\
				& - 1685894 \tilde f^5 k_2^3 + 421456 \tilde f^4 k_2^4 - 69608 \tilde f^3 k_2^5 + \notag\\
				& + \left .\left . 10288 \tilde f^2 k_2^6 - 896 \tilde f k_2^7 + 64 k_2^8\right )\right ] = 0,\notag
			\end{align}
			respectively.
			
			We see from \eqref{PolynomialSpecialCase1} and \eqref{PolynomialSpecialCase2} that the conic $\delta = 28 k_2^2 - 98 \tilde f k_2 + 147 \tilde f^2 - 32 c$ is the only common factor. The conic is an ellipse when $c>0$ and an imaginary ellipse when $c<0$. 
			
			On the other hand, from \eqref{Relation1} we obtain that $\delta = 0$. Thus, as we already mentioned, this case is nothing but Lemma \ref{LemmaBicons}.
			
			We have $\delta (k_2) = 0$, for any $k_2$. In the following, we do not need to work with the variable $k_2$, so we will come back to the first variable $t$. Differentiating the relation $\delta (t) = 0$, we obtain
			$$
			7(3f(t) - k_2(t))f'(t)+(4k_2(t)-7f(t))k_2'(t) = 0.
			$$
			We recall that 
			$$
			k_1 = -\frac 72 f\quad \text{and} \quad k_3 = \frac 72 f - k_2.
			$$
			Using Lemma \ref{LemmaKFconst}, we get that
			$$
			k_2'(t) = \frac {7(k_2(t) - 3f(t))} {4k_2(t)-7f(t)} f'(t)
			$$
			and thus, we have
			$$
			k_3'(t) = \frac {7(2k_2(t)-f(t))}{2(4k_2(t) - 7f(t))}f'(t).
			$$
			We can write $\delta = 0$ as follows
			\begin{equation}\label{CanonicalFormDelta}
				7(4k_2-7f)^2=128c - 245f^2
			\end{equation}
			Using \eqref{DefinitionOmegaTheta}, we obtain
			\begin{align*}
				\Omega =& -\frac {14(k_2-3f)} {(7f+2k_2)(4k_2 - 7f)} f'\\
				\Theta =& \frac {7(2k-f)} {2(k_2 - 7f)(4k_2-7f)} f'.
			\end{align*}
			Thus, \eqref{EquationGauss3} can be written as
			\begin{equation}\label{EquationGauss3Particular1}
				98(k_2-3f)(2k_2-f)\left (f'\right )^2 = \left (7fk_2-2k_2^2+2c\right )\left (4k_2-7f\right )^2(7f+2k_2)(k_2-7f).
			\end{equation}
			Using the fact that $\delta = 0$ and its equivalent form \eqref{CanonicalFormDelta}, we obtain
			\begin{equation}\label{EquationGauss3Particular2}
				98^2\left (32c-105f^2\right ) \left ( f' \right )^2 = \left (147f^2 - 4c\right )\left (128c - 245f^2\right )\left (-833f^2+32c\right ).
			\end{equation}
			Differentiating \eqref{EquationGauss3Particular2} we get
			\begin{align}
				 & -20580 f \left (f'\right )^2 + 196\left ( 32 c - 105f^2\right ) f{''} = \label{EquationDerFParticular1}\\
				=& 3f(128c - 245 f^2) \left (-833f^2 + 32c\right )- \notag\\
				 & -5f\left ( 147 f^2-4c\right ) \left ( -833 f^2 + 32c\right ) - \notag\\
				 & -17 f \left (147 f^2 - 4c\right ) \left ( 128c - 245 f^2\right ). \notag
			\end{align}
			From \eqref{EquationNormalPart}, we obtain 
			\begin{equation}\label{EquationDerFParticular2}
				f{''} = \frac {1911 f} {833 f^2 - 32 c} \left (f'\right ) ^2 + \frac 1 {14} \left ( 245 f^2 - 2c\right ) f
			\end{equation}
			Substituting \eqref{EquationDerFParticular2} in \eqref{EquationDerFParticular1} and using \eqref{EquationGauss3Particular2} we obtain
			\begin{align*}
			 	 & 14386462720 \cdot c^4 f - 356598824960 \cdot c^3 f^3 - 2331746708480 \cdot c^2 f^5 +\\ 
			 	 & + 42758681977200 \cdot c f^7 + 151265495839500 f^9 = 0
			\end{align*}
			We get a $9^{th}$-degree polynomial in the variable $f$ with constant coefficients, which is a contradiction.
			
		\appendix
		\section{}\label{AppCasePis0}
		\noindent Here we present the Mathematica code for computing the resultant of the two polynomials that appear in the proof of Lemma \ref{LemmaQnotVanishes}.
		
		First, we have to declare $\widetilde P$ and $\widetilde Q$. For simplicity, we denote them by $P$ and $Q$, respectively. Also, $\tilde f$ is denoted by $f$ and $k_2$ by $k$.
		
		\begin{verbatim}
			P = 9/4 m^3*(3 m - 2 r + 17) f^3 + 
			3/2 m^2*(6 r^2 - 43 r + 37 + 11 m - 11 m*r)*f^2*k + 
			m*(r - 1)*(26 r + 4 m*r + 1 - 4 m)* f*k^2 + 
			m*(m - r) (8 r - 5 m*r - 13 m - 17) c*f -
			2 (r - 1)^2 (7 + 2 m)*k^3 + 2 (m - r)*(r - 1)*(m + 17)*c*k
		\end{verbatim}
	
		\begin{verbatim}
			Q = 9/2 m^3*(2 r - 2 m - 3)*f^3 + 9/2 m^2*(7 r - m + 3 - m^2 + 
			3 m*r - 2 r^2)*f^2*k + 2 m*(r - 1)*(4 m - 13 r - 18 - 2 m*r + 2 m^2)*f*k^2 +
			m*(m - r)*(5 m*r - 5 m^2 - 7 m - 8 r + 42)*c*f + 
			2 (r - 1)^2*(7 + 2 m)*k^3 - 2 (r - 1)*(m - r)*(m + 17)*c*k
		\end{verbatim}
		For simplicity, we denoted $k_2$ by $k$. 
		
		Now, we compute the resultant of $P$ and $Q$ with respect to $k$ and simplify it.
		
		\begin{verbatim}
			ResPQ[m_][r_][c_][f_] = Collect[Resultant[P, Q, k], f, FullSimplify]
		\end{verbatim}
		We want to prove that this resultant is not the zero polynomial. We obtain a $9^{th}$-degree polynomial in $f$, free of the constant term and with vanishing coefficients of $f$ and $f^2$. We will look to the coefficient of $f^3$ since it is the first non-zero monomial.
		\begin{verbatim}
			CoefF3 = Coefficient[ResPQ[m][r][c][f], f, 3]
		\end{verbatim}
		which yields
		$$
		1474560\cdot c^3 (-1 + m) m^3 (5 + m) (7 + 2 m)^3 \left (20 -3m + m^2\right )^2 (m - r)^3 (-1 + r)^6
		$$
		Since the integers $r$ and $m$ satisfy $1<r<m$, none of the factors of this coefficient can be zero, except $c^3$. 
		
		If $c\neq 0$, clearly the resultant is a non-zero polynomial.

		\section{}\label{AppResultantF}
		
		In addition of $\tilde P$ and $\tilde Q$ from the Appendix \ref{AppCasePis0}, we have to declare $\tilde R$. Again, for simplicity, we will denote $R = \tilde R$.
		
		\begin{verbatim}
			R = (9 m^3*(m - r + 6))/(4 (m - r))*
			f^3 + (3 m^2*(r - 1)*(2 r - 2 m - 15))/(2 (m - r))*f^2*
			k + (m*(r - 1)*(m + 11 r - 12 + 2 m*r - 2 r^2))/(m - r)*f*
			k^2 + (2 (r - 1)^2*(m - 2 r + 1))/(m - r)*k^3 - 
			m*(2 m*r + 4 m - 2 r^2 + 5 r)*c*f - 2 (r - 1)*(m - 2 r + 1)*c*k
		\end{verbatim}
	
		We use relation \eqref{Relation9} to obtain the polynomial given by \eqref{Polynomial1}.
		
		\begin{verbatim}
			Rel362 = (3 + r - m)*(m (m - r + 3)*f - 2 (r - 1) k)*
			P^2*(c + (3 m)/(2 (m - r))*f*k - (r - 1)/(m - r)*k^2) + (r - 
			1) (4 - r) (m*f + 2 k)*
			Q^2*(c + (3 m)/(2 (m - r))*f*k - (r - 1)/(m - r)*k^2) - P*Q*R
		\end{verbatim}
	
		Next, we define a function $H$ which is just \eqref{Relation9} simplified.
		
		\begin{verbatim}
			H[m_][r_][c_][f_][k_] = Total[FullSimplify[MonomialList[Rel362, 
			{f, k}]]]
		\end{verbatim}
	
		We input the derivative of $\tilde f$, now $f$, with respect to $k_2$ which is denoted by $k$.
		
		\begin{verbatim}
			DerF = (2 (r - 1))/(3 m) - (2 (m (m - r + 3)*f - 2 (r - 1)*k)*
			P)/(3 m (m*f + 2 k)*Q)
		\end{verbatim}
		
		Now, we declare the numerator and the denominator of this relation, respectively.
		
		\begin{verbatim}
			NumDerF = Numerator[Together[DerF]]
		\end{verbatim}
	
		\begin{verbatim}
			DenDerF = Denominator[Together[DerF]]
		\end{verbatim}
	
		If we denote by $\overline H \left (k_2, \tilde f \right )$ the polynomial in relation \eqref{Polynomial1}, we have
		$$
		\frac {d\overline H} {dk_2} \left (k_2, \tilde f (k_2)\right ) = \frac {\partial \overline H} {\partial k_2} \left ( k_2, \tilde f (k_2)\right ) + \frac {\partial \overline H} {\partial \tilde f} \left (k_2, \tilde f (k_2)\right ) \frac {d\tilde f} {d k_2} (k_2).
		$$
		Actually, $\overline H$ is denoted by $H$ in the code.
		
		We multiply this relation with the denominator of the derivative of $\tilde f$ with respect to $k_2$ in order to obtain the polynomial in \eqref{Polynomial2}.
		
		\begin{verbatim}
			Rel370 = D[H[m][r][c][f][k], f]*NumDerF +
			D[H[m][r][c][f][k], k]*DenDerF
		\end{verbatim}
		
		We define the function $K$ to be the polynomial from \eqref{Polynomial2} after simplifications.
		
		\begin{verbatim}
			K[m_][r_][c_][f_][k_]=Total[FullSimplify[MonomialList[Rel370, {f, k}]]]
		\end{verbatim}
		
		Now, we compute the resultant for $H$ and $K$ with respect to $k$, for all $c \in \{ -1, 0, 1\}$, $m \in \overline {4, 30}$ and $r \in \overline {2, m-1}$. We know that, since the biharmonicity and minimality are invariant under homothetic transformations, we can assume that $c \in \{ -1, 0, 1\}$.
		
		\begin{verbatim}
			For[cc = -1, cc < 2, cc++,
			 For[mm = 4, mm < 31, mm++,
			  For[rr = 2, rr < mm, rr++,
			   res = Resultant[H[mm][rr][cc][f][k], K[mm][rr][cc][f][k], k];
			   Print["The resultant with respect to k for m = ", mm, ", r = ", 
			   rr, ", c = ", cc, " is \n", res];
			   If[res === 0, Print["Exception"], ];
			  ]
			 ]
			]
		\end{verbatim}
	
		From these computations we find out that the resultant is the zero polynomial only in the case $m = 7$ and $r = 4$.
		
		Further, we compute the resultant of $H$ and $K$ with respect to $f$, for any $c \in \{ -1, 0, 1\}$, $m \in \overline {4, 30}$ and $r \in \overline {2, m-1}$.
		
		\begin{verbatim}
			For[cc = -1, cc < 2, cc++,
			 For[mm = 4, mm < 31, mm++,
			  For[rr = 2, rr < mm, rr++,
			   res = Resultant[H[mm][rr][cc][f][k], K[mm][rr][cc][f][k], f];
			   Print["The resultant with respect to f for m = ", mm, ", r = ", 
			   rr, ", c = ", cc, " is \n", res];
			   If[res === 0, Print["Exception"], ];
			  ]
			 ]
			]
		\end{verbatim}
		
		In this situation, the resultant is the zero polynomial for any $c$, $m$ or $r$.
		
		\section{}\label{AppNewPolynomials}
		
		We need to find the coefficients of lower degree polynomials \eqref{Polynomial3} and \eqref{Polynomial4}. To do this, we will create two matrices with the entries being the coefficients of the polynomials from \eqref{Polynomial1} and \eqref{Polynomial2}, respectively.
		
		\begin{verbatim}
			CoefH = CoefficientList[H[m][r][c][f][k], {k, f}]
		\end{verbatim}
	
		\begin{verbatim}
			CoefK = CoefficientList[K[m][r][c][f][k], {k, f}]
		\end{verbatim}
		
		These matrices are made such that the element from the position $(i, j)$ is the coefficient of $k_2^{i-1} \tilde f^{j-1}$ from $H$ and $K$, respectively (see Appendix \ref{AppResultantF}). We look for a formula that links the elements of these matrices with $a_{i, s-i}$ and $b_{i, \overline s -i}$ from \eqref{Polynomial1} and \eqref{Polynomial2}, respectively, where $s \in \{ 3, 5, 7, 9\}$ and $\overline s \in \{ 4, 6, 8, 10, 12 \}$.
		
		We will study the case of $\overline H$ form Appendix \ref{AppResultantF}, the other one being similar.
		For simplicity, let $\left (A_{ij} \right ) _{i, j \in \overline {1, 10}}$ be a matrix given by \textit{CoefH}, thus 
		$$
		\overline H = \sum _{i = 1} ^{10} \sum _{j = 1} ^{10} A_{ij} k_2^{i-1}\tilde f^{j-1}
		$$
		and $s \in \{ 3, 5, 7, 9\}$.
		
		If $(i-1) + (j-1) = s$, then $j = s - i +2$ and $A_{ij} = A_{i, s-i+2} = a_{i-1, s-(i-1)}$.
		
		If $(i-1) + (j-1) \neq s$, for any $s \in \{ 3, 5, 7, 9\}$, then $A_{ij} = 0$.
		
		Therefore,
		\begin{align*}
			\overline H =& \sum _{i = 1} ^{10} \sum _{j = 1} ^{10} A_{ij} k_2^{i-1}\tilde f^{j-1}\\
						=& \sum _{s \in \{ 3, 5, 7, 9\}} \sum _{i=1} ^{s+1} A_{i, s-i+2} k_2^{i-1}\tilde f^{s-i+1}\\
						=& \sum _{s \in \{ 3, 5, 7, 9\}} \sum _{i=1} ^{s+1} a_{i-1, s-(i-1)} k_2^{i-1}\tilde f^{s-(i-1)}\\
						=& \sum _{s \in \{ 3, 5, 7, 9\}} \sum _{i=0} ^{s} a_{i, s-i} k_2^{i}\tilde f^{s-i}
		\end{align*} 
		Thus, 
		$$
		\overline H = \sum _{i = 1} ^{10} \sum _{j = 1} ^{10} A_{ij} \left ( \frac {k_2} {\tilde f} \right ) ^{i-1} \tilde f ^{i+j-2}
		$$
		and
		$$
		\frac 1 {f^3} \overline H = \sum _{i = 1} ^{10} \sum _{j = 1} ^{10} A_{ij} \left ( \frac {k_2} {\tilde f} \right ) ^{i-1} \tilde f ^{i+j-5}.
		$$
		Now, we write down a little code to obtain the polynomials in \eqref{Polynomial3} and \eqref{Polynomial4}, respectively.
		
		\begin{verbatim}
			AuxH = 0; For[i = 1, i <= 9, i++,
			 For[j = 1, j <= 10, j++,
			 AuxH = AuxH + CoefH[[i]][[j]]*f^(i + j - 5)*z^(i - 1); 
			 ]
			]
			newH [m_][r_][c_][f_][z_] = AuxH
		\end{verbatim}
		
		\begin{verbatim}
			AuxK = 0; For[i = 1, i <= 12, i++,
			 For[j = 1, j <= 13, j++,
			 AuxK = AuxK + CoefK[[i]][[j]]*f^(i + j - 6)*z^(i - 1); 
			 ]
			]
			newK[m_][r_][c_][f_][z_] = AuxK
		\end{verbatim}
		
		Using the same approach as in Appendix \ref{AppResultantF}, we can compute the resultants for $c \in \{ -1, 1\}$, $m \in \overline {4, 30}$ and $r \in \overline {2, m-1}$ of \textit{newH} and \textit{newK} with respect to $f$ and $z$. We get that both resultants vanish only when $m=7$ and $r=4$. Recall that, since $c \neq 0$ and the biharmonicity and harmonicity are invariant under homothetic transformations, we can assume $c \in \{ -1, 1\}$.
		
		First, we will compute the resultant of these new polynomials with respect to $f$ in the generic case.
		
		\begin{verbatim}
			res = Resultant[newH[m][r][c][f][z], newK[m][r][c][f][z], f]
		\end{verbatim}
	
		The resultant consists of a constant multiplied by a squared polynomial, denoted by \textit{resPoly}. Since we study in which case this resultant vanishes, we will consider only the polynomial \textit{resPoly}.
		
		\begin{verbatim}
			resFinal[m_][r_][c_][z_] = Total[ParallelMap[FullSimplify,
			 MonomialList[resPoly, z]]]
		\end{verbatim}
		
		We obtain a $40^{th}$-degree polynomial in the variable $z$ with constant coefficients depending on $c$, $m$ and $r$. The dominant coefficient of \textit{resPoly} is 
		\begin{verbatim}
			dominantCoef = Coefficient[resFinal[m][r][c][z], z, 40]
		\end{verbatim}
		which yields
		\begin{align*} 
			& -6917529027641081856\cdot c^{12} (-10 + m)^3 (-7 + m)^3 (-3 + m) (-1 + 
			m)^2 m^8 (5 + m)\cdot\\
			& \cdot (7 + 2 m)^5 (7 - 5 m + m^2) (20 - 3 m + m^2)^4 (-196 + 23 m + 11 m^2) \cdot \\
			& \cdot (-497 + 16 m + 49 m^2) (1 + m - 2 r) (m - r)^{12} (-1 + r)^{28}.
		\end{align*}
		Since $m \geq 4$ and $r \in \overline {2, m-1}$ are integers, also using the command \textit{IntegerQ}, it is easy to see that this coefficient vanishes if and only if
		$$
		m=7 \quad \text{or} \quad m=10 \quad \text{or} \quad m=2r-1.
		$$
		We have seen that this resultant does not vanish if $m=7$ and $r\neq 4$ or if $m=10$.
		
		If $m = 2r-1$, we substitute $m$ in the resultant above
		
		\begin{verbatim}
			resSpecial = resFinal[2r-1][r][c][z]
		\end{verbatim}
		
		We obtain a $39^{th}$-degree polynomial and its dominant coefficient is 
		
		\begin{verbatim}
			FullSimplify[Coefficient[resSpecial, z, 39]]
		\end{verbatim}
		which yields
		\begin{align*}
			& -56668397794435742564352\cdot c^{12} (11 - 2 r)^2 (-4 + r)^4 (-2 + r) (-1 + r)^{42} (2 + r)\cdot\\
			& \cdot (-1 + 2 r)^9 (5 + 4 r)^5 (13 - 14 r + 4 r^2) (12 - 5 r + 2 r^2)^4 (-116 - 41 r + 49 r^2) \cdot \\
			& \cdot (-2356 - 1035 r + 120 r^2 + 112 r^3)
		\end{align*}
		Since $r$ is an integer and using the command \textit{IntegerQ} in Mathematica, the only possibilities when this coefficient is zero are 
		$$r = 2 \quad \text{or} \quad r=4.$$
		
		If $r = 2$, then $m = 3 < 4$.
		
		If $r = 4$, then $m = 7$.
		
		Therefore, the only case in which the resultant vanishes is $m=7$ and $r=4$.
		
	\addcontentsline{toc}{section}{Bibliografie}
	\bibliographystyle{acm}
	
\end{document}